\documentclass[10pt,reqno]{amsart}
\usepackage[top=30truemm,bottom=30truemm,left=25truemm,right=25truemm]{geometry}

\usepackage{amssymb, amsmath, amsthm, amsfonts, latexsym, mathtools}
\usepackage{graphicx}
\usepackage{xcolor}
\usepackage{amscd, bm, enumerate, comment, dsfont}
\usepackage[all]{xy}
\usepackage{hyperref}
\usepackage[nameinlink]{cleveref}
\hypersetup{
setpagesize=false,
bookmarksnumbered=true,%
bookmarksopen=true,%
colorlinks=true,%
linkcolor=orange,
citecolor=blue,
}

\crefname{equation}{}{}

\newtheorem{thm}{Theorem}[section]
\crefname{thm}{Theorem}{Theorems}
\newtheorem{prop}[thm]{Proposition}
\crefname{prop}{Proposition}{Propositions}
\newtheorem{lem}[thm]{Lemma}
\crefname{lem}{Lemma}{Lemmas}
\newtheorem{cor}[thm]{Corollary}
\crefname{cor}{Corollary}{Corollaries}
\newtheorem{conj}[thm]{Conjecture}

\theoremstyle{definition}
\newtheorem*{definition}{Definition}
\newtheorem*{example}{Example}

\theoremstyle{remark}
\newtheorem*{remark}{\textbf{Remark}}

\newcommand{\til}{\widetilde}

\newcommand{\modd}{\operatorname{ mod }}

\newcommand{\ceq}{\coloneq}
\newcommand{\Z}{\mathbb{Z}}

\newcommand{\Span}{\operatorname{Span}}

\newcommand{\R}{\mathbb{R}}
\newcommand{\Q}{\mathbb{Q}}

\newcommand{\wt}{\operatorname{wt}}

\font\sevency=wncyr7
\newcommand{\sha}{\scalebox{0.75}{\hbox{\sevency X}}}
\font\sevency=wncyr7
\newcommand{\Sha}{\hbox{\sevency X}}

\numberwithin{equation}{section}

\makeatletter
\def\@settitle{
  \begin{center}
  \baselineskip18\p@\relax
  \Large\bfseries
  \@title
  \end{center}
}
\makeatother

\title[]{An Explicit Expression for MZVs in Terms of Symmetric MZVs}
\author{Katsumi Kina}
\address{Graduate School of Mathematics, Kyushu University}
\email{kkina@math.kyushu-u.ac.jp}

\begin{document}

\dedicatory{Dedicated to Professor Masanobu Kaneko on the occasion of his retirement from Kyushu University}

\begin{abstract}
We provide another proof of the fact, originally proved by Seidai Yasuda, that symmetric multiple zeta values generate the entire space of multiple zeta values. Furthermore, based on this argument, we present an algorithm for expressing multiple zeta values in terms of symmetric multiple zeta values and products of multiple zeta values. Moreover, we give some results on symmetric and finite multiple zeta values of depth three.
\end{abstract}

\maketitle

{\hypersetup{linkcolor=black}
\tableofcontents
}

\section{Introduction}

Kaneko and Zagier, in \cite{KZ}, introduced the following real numbers, called the $*$-symmetric multiple zeta value $\zeta_{\mathcal{S}}^*(\mathds{k})$ and the $\Sha$-symmetric multiple zeta value $\zeta_{\mathcal{S}}^{\sha}(\mathds{k})$, and presented several properties and interesting conjectures. For integers $k_1,\dots,k_r\geq 1$, we define
\begin{align}
\zeta_{S}^*(k_1,\dots,k_r) &\ceq \sum_{i=0}^{r} (-1)^{k_{i+1}+\cdots+k_r}
\zeta^{*}(k_1,\dots,k_i;T)\zeta^{*}(k_r,\dots,k_{i+1};T),\label{eq:def-stu-sym}
\\
\zeta_{S}^{\sha}(k_1,\dots,k_r) &\ceq \sum_{i=0}^{r} (-1)^{k_{i+1}+\cdots+k_r}
\zeta^{\sha}(k_1,\dots,k_i;T)\zeta^{\sha}(k_r,\dots,k_{i+1};T).\label{eq:def-shu-sym}
\end{align}
Here, the multiple zeta value (MZV) is defined by
$$\zeta(k_1,\dots,k_r) \ceq \sum_{0<n_1<\dots<n_r}\frac{1}{n_1^{k_1}\cdots n_r^{k_r}}\ , \quad (k_1,\dots,k_{r-1}\geq 1,\, k_r\geq 2)$$
and $\zeta^{*}$ (resp. $\zeta^{\sha}$) denotes the stuffle (resp. shuffle) regularized multiple zeta values introduced in \cite{IKZ06}. For an index $\mathds{k}=(k_1,\dots,k_r)$, $r$ is called its depth, and $\wt(\mathds{k})\ceq k_1+\cdots+k_r$ is called its weight.

Let $\mathcal{I} = \bigcup_{r\geq 1}(\Z_{>0})^r$ and $\mathcal{I}_{\mathrm{ad}} = \{(k_1,\dots,k_r)\in \mathcal{I}\mid k_r\geq 2\}$. For $\bullet\in\{*,\Sha\}$, we set
$$\mathcal{Z}_k \ceq \Span_{\Q}\{\zeta(\mathds{k})\mid \mathds{k}\in \mathcal{I}_{\mathrm{ad}},\, \wt(\mathds{k})=k\}
\quad \text{and}\quad
\mathcal{Z}_k^{S,\bullet} \ceq \Span_{\Q}\{\zeta^{\bullet}_{S}(\mathds{k})\mid \mathds{k}\in \mathcal{I},\, \wt(\mathds{k})=k\}.$$
Set $\mathcal{Z}_0=\mathcal{Z}_0^{S,\bullet}=\Q$. Moreover, for $\bullet\in\{*,\Sha\}$ we set
\begin{align*}
\mathcal{Z} \ceq \sum_{k\geq 0}\mathcal{Z}_k\quad \text{and}\quad
\mathcal{Z}^{S,\bullet} \ceq \sum_{k\geq 0}\mathcal{Z}_k^{S,\bullet}.
\end{align*}

For the $*$- and $\Sha$-symmetric multiple zeta values, the following results are well known.

\begin{thm}[Theorem 3 in \cite{KZ}]\label{thm:KZ0}
The equations \cref{eq:def-stu-sym} and \cref{eq:def-shu-sym} are independent of $T$. Furthermore, for any $\mathds{k}\in\mathcal{I}$, we have 
$$\zeta_{S}^{\sha}(\mathds{k}) = \zeta_{S}^{*}(\mathds{k}) \mod \zeta(2)\cdot \mathcal{Z}_{\wt(\mathds{k})-2}.$$
\end{thm}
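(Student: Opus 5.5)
Both assertions follow from the algebraic setup of \cite{IKZ06}: the harmonic and shuffle algebras, their regularization maps, and the comparison map $\rho$ between the two regularizations.

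\textbf{Independence of $T$.} I will use generating series. For $\bullet\in\{*,\Sha\}$, regularization is a $\Q$-algebra homomorphism into $\mathcal{Z}[T]$ for the relevant product. Moreover, the regularized value $\zeta^\bullet(\mathds{k};T)$ is a polynomial in $T$ whose derivative is governed by the trailing $1$'s of $\mathds{k}$.

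Concretely, for the stuffle case I form $\Phi^*(T)=\sum_{\mathds{k}}\zeta^*(\mathds{k};T)\,e_{\mathds{k}}$ in the completed noncommutative power series ring over the letters $y_k$. This series is group-like for the harmonic coproduct, and it factors as
\[
\Phi^*(T)=\Phi^*(0)\exp\Bigl(T y_1\Bigr)
\]
(equivalently, $\frac{d}{dT}\Phi^*(T)=\Phi^*(T)\,y_1$), up to the ordering convention fixed by where the divergent letter sits. I define the antipode-like twist $\overline{\Phi}$ by reversing words and inserting the signs $(-1)^{\wt}$. The generating series of $\zeta_S^*$ is then the concatenation product $\Phi^*(T)\cdot\overline{\Phi^*(T)}$, possibly after relabeling. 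Under this twist, $\exp(Ty_1)$ becomes $\exp(-Ty_1)$ and lands adjacent to $\exp(Ty_1)$, so the $T$-dependence cancels. The shuffle case is identical, with $x_1$-words in place of $y$-words and $\exp(Te_1)$ as the $T$-factor. Equivalently, one can differentiate \cref{eq:def-stu-sym} in $T$ termwise: the derivative of each product telescopes against the neighbouring $i$, and the sign $(-1)^{k_{i+1}}$ with $k_{i+1}=1$ produces the cancellation.

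\textbf{Comparison of the two versions.} Since both sides are independent of $T$, I set $T=0$. By \cite[Thm.~1]{IKZ06}, $\zeta^{\sha}(\,\cdot\,;T)=\rho(\zeta^*(\,\cdot\,;T))$. Here $\rho$ is the $\mathcal{Z}$-linear map determined by
\[
\rho(e^{Tu})=A(u)e^{Tu},\qquad A(u)=\exp\Bigl(\sum_{n\ge2}\frac{(-1)^n}{n}\zeta(n)u^n\Bigr).
\]
Writing $A(u)=\Gamma(1+u)e^{\gamma u}$ as in \cite{IKZ06}, the product relevant to us involves $A(u)A(-u)$. Its logarithm retains only the even zeta values, i.e.
\[
A(u)A(-u)=\exp\Bigl(\sum_{m\ge1}\frac{\zeta(2m)}{m}u^{2m}\Bigr)=\frac{\pi u}{\sin \pi u},
\]
and all of its nonconstant coefficients lie in $\zeta(2)\Q[\zeta(2)]$, since $\zeta(2m)\in\Q\pi^{2m}$.

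The generating-series identity then reads $\Phi^{\sha}=\Phi^*\cdot(\text{series in }A)$ in $T$-form. Substituting into $\Phi\cdot\overline{\Phi}$ gives
\[
\Phi_S^{\sha}=\Phi^*\,A(u)A(-u)\,\overline{\Phi^*}.
\]
Expanding, $\Phi_S^{\sha}-\Phi_S^*$ is a sum of products of a $\zeta(2)$-multiple of a coefficient of $\pi u/\sin\pi u$ with two regularized MZVs. Each such term lies in $\zeta(2)\cdot\mathcal{Z}_{\wt-2}$, because regularized values lie in $\mathcal{Z}$ and $\mathcal{Z}$ is an algebra graded by weight.

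\textbf{Main obstacle.} The delicate step is making the placement of the $T$- and $u$-factors precise: I must check that the divergent letter $y_1$ (respectively $x_1$-type) sits at the end, adjacent to where the reversal puts its mirror image. With the paper's ordering $n_1<\dots<n_r$, the divergence comes from $k_r=1$, so the factor should appear on the right of $\Phi$. Reversing $\overline{\Phi}$ puts $k_r$ first, so the two factors meet in the middle and cancel or combine. I would first verify this bookkeeping in depth $1$ and $2$ by hand before writing the general argument.
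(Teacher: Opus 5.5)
The paper does not prove this statement; it quotes it from Kaneko--Zagier. Your argument is correct and is essentially the standard proof: with the paper's convention, right-deletion of $y_1$ is a derivation for both $*$ and $\Sha$ when written on indices, which gives $\Phi^{\bullet}(T)=\Phi^{\bullet}(0)e^{Ty_1}$, so $e^{Ty_1}e^{-Ty_1}$ cancels in $\Phi^{\bullet}(T)\overline{\Phi^{\bullet}(T)}$, and since $\rho$ only sees this right-hand factor one gets $\Phi^{\sha}(T)=\Phi^{*}(0)A(y_1)e^{Ty_1}$, hence $\Phi^{\sha}_S=\Phi^*(0)A(y_1)A(-y_1)\overline{\Phi^*(0)}$ with $A(u)A(-u)=\pi u/\sin \pi u$. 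The bookkeeping you flag as the main obstacle works out exactly as you predict; the one thing to tidy is to run the shuffle case in the same index alphabet $y_k$ rather than the $e_0,e_1$ alphabet, because the reversal-with-sign twist is defined on indices.
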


From \cref{thm:KZ0}, we can define the symmetric multiple zeta value (SMZV) $\zeta_S^{\bullet}(\mathds{k})$ as an element of $\mathcal{Z}/\zeta(2)\mathcal{Z}$, independent of the choice of $\bullet$, either $*$ or $\Sha$. We denote it by $\zeta_S(\mathds{k})$, omitting the symbol $\bullet$. Moreover, we denote by $Z_S(\mathds{k})$ the image of $\zeta^{\sha}(\mathds{k};0)$ in $\mathcal{Z}/\zeta(2)\mathcal{Z}$. It is well known that $Z_S(1)=0$ and $Z_S(2k)=0$ for any $k\geq 1$. For symmetric multiple zeta values, there is an interesting conjecture, as described below.

For $\mathds{k}=(k_1,\dots,k_r)\in\mathcal{I}$, the finite multiple zeta value (FMZV) $\zeta_{\mathcal{A}}(\mathds{k})$ is an element of
$$\mathcal{A} \ceq \prod_{p:\text{prime}}\Z/p\Z \;\Big/ \bigoplus_{p:\text{prime}}\Z/p\Z$$
defined by $\zeta_{\mathcal{A}}(\mathds{k}) \ceq (\zeta_{<p}(\mathds{k})\modd p)_{p:\text{prime}}$, where
$$\zeta_{<N}(\mathds{k}) \ceq \sum_{0<n_1<\dots<n_r<N}\frac{1}{n_1^{k_1}\cdots n_r^{k_r}}.$$
The field $\mathbb{Q}$ is naturally embedded into $\mathcal{A}$, and hence $\mathcal{A}$ can be regarded as a $\mathbb{Q}$-vector space. We denote by $\mathcal{Z}_{\mathcal{A}}$ the subspace of $\mathcal{A}$ generated by all finite multiple zeta values.

\begin{conj}[\cite{KZ}]\label{conj-KZ}
There exists an isomorphism
$$\mathcal{Z}_{\mathcal{A}} \cong \mathcal{Z}/\zeta(2)\mathcal{Z}\ ;\ \zeta_{\mathcal{A}}(\mathds{k}) \mapsto \zeta_S(\mathds{k}).$$
\end{conj}

This conjecture implies that the study of symmetric multiple zeta values is equivalent to that of finite multiple zeta values. In this conjecture, it is nontrivial that $\mathcal{Z}/\zeta(2)\mathcal{Z}$ is generated by symmetric multiple zeta values. This follows from the following result.

\begin{thm}[Theorem 6.1 in \cite{Yas16}]\label{thm:main-thm}
For any integer $k\geq 0$, we have $\mathcal{Z}_k = \mathcal{Z}_k^{S,*}$.
\end{thm}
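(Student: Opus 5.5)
Since $\zeta_S^*(\mathds{k})$ is a $\Q$-linear combination of products of regularized MZVs, and these lie in $\mathcal{Z}$ and are homogeneous of weight $\wt(\mathds{k})$, the inclusion $\mathcal{Z}_k^{S,*}\subseteq\mathcal{Z}_k$ is immediate. The content is the reverse inclusion. We prove it by induction on the weight $k$, and within a fixed weight by induction on the depth $r$. The cases $k=0$ and $k=1$ are trivial, since $\mathcal{Z}_1=0$ and $\zeta_S^*(1)=0$.

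The key step is to isolate the ``leading term'' of $\zeta_S^*(\mathds{k})$. The terms $i=r$ and $i=0$ of \cref{eq:def-stu-sym} give
\[
\zeta^*(k_1,\dots,k_r;T)+(-1)^{k}\zeta^*(k_r,\dots,k_1;T).
\]
Every other term is a product $\zeta^*(\mathds{a};T)\zeta^*(\mathds{b};T)$ with both factors nonempty and of positive weight less than $k$. Because the total is independent of $T$ (\cref{thm:KZ0}), we may set $T=0$. The cross terms then lie in $\sum_{0<j<k}\mathcal{Z}_j\mathcal{Z}_{k-j}$, and by the induction hypothesis on weight this space is contained in $\sum_j\mathcal{Z}_j^{S,*}\mathcal{Z}_{k-j}^{S,*}$. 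Here we must be careful, because the statement of the theorem concerns the linear span, not the algebra generated. So we also need that products of SMZVs of lower weight lie in $\mathcal{Z}_k^{S,*}$. I would obtain this from a harmonic (stuffle) product formula for $\zeta_S^*$: one shows $\zeta_S^*(\mathds{a})\zeta_S^*(\mathds{b})=\sum\zeta_S^*(\mathds{a}*\mathds{b})$, which follows from the definition because the reversal-with-sign map is compatible with the stuffle product. This proves the reduction
\[
\zeta_S^*(\mathds{k})\equiv \zeta^*(\mathds{k};0)+(-1)^k\zeta^*(\overline{\mathds{k}};0)\pmod{\mathcal{Z}^{S,*}_k}.
\]

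It remains to show that the symmetrized combinations $\zeta^*(\mathds{k};0)+(-1)^k\zeta^*(\overline{\mathds{k}};0)$ span $\mathcal{Z}_k$ modulo products, where $\overline{\mathds{k}}$ denotes the reversed index. Products are already handled by the previous paragraph. We therefore work in the quotient $\mathcal{Z}_k/(\text{products})$, the space of indecomposables. The reversal map $\mathds{k}\mapsto\overline{\mathds{k}}$ with sign $(-1)^k$ should act on this quotient through the antipode of the stuffle Hopf algebra. Modulo products, the antipode of a primitive-like element is $-1$ times that element, while $\zeta^*(\overline{\mathds{k}})$ relates to $\zeta^*(\mathds{k})$ via the antipode up to a sign $(-1)^r$ and stuffle corrections. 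I would make this precise by applying the stuffle antipode identity $\sum_i(-1)^{i}\zeta^*(k_1,\dots,k_i)\,\zeta^*(\text{reversed stuffle-sums of the rest})=0$. This should give $(-1)^k\zeta^*(\overline{\mathds{k}})\equiv(-1)^{k+r}\zeta^*(\mathds{k})$ plus lower-depth terms modulo products. Hence $\zeta_S^*(\mathds{k})\equiv(1+(-1)^{k+r})\zeta(\mathds{k})$ modulo products and lower depth.

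For $k+r$ even this recovers $2\zeta(\mathds{k})$, and induction on depth finishes these cases. The case $k+r$ odd is the main obstacle: there the leading terms cancel. For this case I would invoke the parity theorem, which says that an MZV whose weight and depth have opposite parity is, modulo products, a combination of MZVs of lower depth. With that in hand, the depth induction closes. Careful bookkeeping of the regularization, including the passage from $\zeta^*(\cdot;0)$ to admissible MZVs, is needed throughout but is routine.
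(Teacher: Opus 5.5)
Your first reduction is sound: with the weight induction and the stuffle property of $\zeta_S^*$, one gets $\zeta_S^*(\mathds{k})\equiv\zeta^*(\mathds{k};0)+(-1)^k\zeta^*(\overline{\mathds{k}};0)$ modulo $\mathcal{Z}_k^{S,*}$. The gap is in the next step, where your sign is wrong, and this breaks the argument. The stuffle antipode relation $\sum_{i=0}^{r}(-1)^i\zeta(k_1,\dots,k_i)\zeta^{\star}(k_r,\dots,k_{i+1})=0$ (suitably regularized) gives, modulo products, $\zeta^{\star}(\overline{\mathds{k}})\equiv(-1)^{r-1}\zeta(\mathds{k})$. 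Hence $\zeta(\overline{\mathds{k}})\equiv(-1)^{r-1}\zeta(\mathds{k})$ modulo products and lower depth. You can test this at $r=1$, where your sign would give $\zeta(k)\equiv-\zeta(k)$, or at $r=2$, where $\zeta(a,b)+\zeta(b,a)=\zeta(a)\zeta(b)-\zeta(a+b)$. Therefore
\[
\zeta_S^*(\mathds{k})\equiv\bigl(1-(-1)^{k+r}\bigr)\zeta(\mathds{k})\pmod{\text{products and MZVs of depth}<r}.
\]
The leading term survives exactly when $k+r$ is odd. That is the case the parity theorem already reduces to lower depth. The leading term cancels exactly when $k+r$ is even, and that is the case carrying all the content. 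So both halves of your argument handle the same case, and the depth induction never produces a new MZV.

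It fails already at depth one. We have $\zeta_S^*(k)=(1+(-1)^k)\zeta(k)=0$ for odd $k$. Since $\mathcal{Z}_1=0$, there are no products in weight $3$, so $\zeta(3)$ is not in the span of depth-one SMZVs and products. In fact $\zeta(3)=\tfrac13\zeta_S^*(1,2)$ comes from depth two. Likewise $\zeta_S^*(a,b)\equiv-\zeta(a+b)$ modulo products whenever $a+b$ is even, so depth-two SMZVs of even weight only ever produce $\zeta(k)$ modulo products.

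What is missing is a mechanism by which SMZVs of other depths yield depth-$r$ MZVs when $k\equiv r\pmod 2$. No leading-term analysis in the depth filtration can provide this, and it is the real content of the theorem. The paper gets it from the coefficients $\zeta_2(\mathds{k})$ of multitangent functions. By Hirose's theorem these are, up to sign, $\Sha$-SMZVs of Hoffman-dual indices, which have depth $k-r$ rather than $r$. Combining this with the reversal relation from $\mathfrak{Z}^S$ and the shuffle antipode relation $\mathfrak{A}_1=0$, the paper shows that $\mathfrak{Z}(X_1,\dots,X_{r-1})$ is cyclically invariant modulo SMZVs and products. Yasuda's differential lemma then kills all partial derivatives. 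Your outer weight induction via the stuffle property of $\zeta_S^*$ matches the paper's final step. The core step, however, must be replaced by an argument of this kind, or by Yasuda's original one.
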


Furthermore, the following statement can be proved as an application of \cref{thm:main-thm}.

\begin{cor}[see Section 6.1 in \cite{Yas16}]
For any integer $k\geq 0$, we have $\mathcal{Z}_k = \mathcal{Z}_k^{S,\sha}$.
\end{cor}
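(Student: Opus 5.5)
Here $\mathcal{Z}_k^{S,\sha}\subseteq\mathcal{Z}_k$ is clear, since the shuffle-regularized values $\zeta^{\sha}(\mathds{k};T)$ have coefficients in $\mathcal{Z}$ and the sum defining $\zeta^{\sha}_S$ is homogeneous of weight $k$. For the reverse inclusion I will induct on $k$ and compare with \cref{thm:main-thm}. The cases $k=0$ and $k=1$ are trivial: $\mathcal{Z}_1=0$, and $\mathcal{Z}_0=\Q$ by convention.

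The key input is a sharpened form of \cref{thm:KZ0}. By \cite{KZ}, the difference $\zeta^{\sha}_S(\mathds{k})-\zeta^{*}_S(\mathds{k})$ is not merely in $\zeta(2)\mathcal{Z}_{k-2}$. It is given explicitly: both regularizations are related by the map $\rho$ of \cite{IKZ06}, whose generating function is $\exp\bigl(\sum_{n\ge2}\frac{(-1)^n}{n}\zeta(n)u^n\bigr)$. In the symmetric combination the odd zeta values cancel, so the correction is a finite sum
\[
\zeta^{\sha}_S(\mathds{k})-\zeta^{*}_S(\mathds{k})=\sum_{j\ge1} c_j\,\zeta(2)^{j}\,\zeta^{*}_S(\mathds{k}_j),
\]
where $c_j\in\Q$, each $\mathds{k}_j$ has weight $k-2j$, and the $\mathds{k}_j$ are obtained from $\mathds{k}$ by removing or lowering entries. (Concretely, $\zeta_S^{\sha}(\mathds k)=\sum_{i}\zeta_S^{*}(k_1,\dots,k_i)\,\cdot(\text{polynomial in }\zeta(2))\cdot(\text{a boundary term})$; this is the form given in \cite{KZ} and \cite{Yas16}.) I will first establish, or cite, this identity with the correction written as $\zeta(2)$ times a $\Q$-linear combination of elements of $\mathcal{Z}_{k-2}$.

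Now the induction. Let $x\in\mathcal{Z}_k$. By \cref{thm:main-thm}, $x=\sum a_{\mathds{k}}\zeta^*_S(\mathds{k})$ with $a_{\mathds k}\in\Q$. Substituting the identity above gives $x=\sum a_{\mathds{k}}\zeta^{\sha}_S(\mathds{k})+\zeta(2)y$ with $y\in\mathcal{Z}_{k-2}$. By the induction hypothesis, $y=\sum b_{\mathds{l}}\zeta^{\sha}_S(\mathds{l})$ with $\wt(\mathds{l})=k-2$.

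It remains to see that $\zeta(2)\zeta^{\sha}_S(\mathds{l})\in\mathcal{Z}^{S,\sha}_k$. I will do this by induction using the shuffle product. The symmetric shuffle values satisfy the shuffle-type product formula
\[
\zeta^{\sha}_S(\mathds a)\,\zeta^{\sha}_S(\mathds b)=\text{(a }\Q\text{-combination of }\zeta^{\sha}_S\text{ of weight }\wt\mathds a+\wt\mathds b\text{)}+\text{(correction terms)},
\]
a known property following from the shuffle-algebra homomorphism $\zeta^{\sha}(\cdot;T)$ and the antipode structure of the symmetric sum. Together with $\zeta_S^{\sha}(2)=\zeta(2)(1+1)$ up to sign, which is a nonzero rational multiple of $\zeta(2)$ for depth one, this writes $\zeta(2)\zeta^{\sha}_S(\mathds l)$ as an element of $\mathcal{Z}^{S,\sha}_k$ plus terms of the form $\zeta(2)\cdot(\text{lower weight})$. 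Those terms are handled by a nested induction on weight.

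I expect the main obstacle to be exactly this last step: making precise that $\zeta(2)\mathcal{Z}_{k-2}\subseteq\mathcal{Z}^{S,\sha}_k$. If a clean product formula is not available, I would fall back on a direct argument. I would show that the $\zeta(2)$-adic filtration terminates by iterating the comparison identity. Since $\zeta(2)^j\mathcal{Z}_{k-2j}$ has weight decreasing to $0$, and $\zeta(2)^{k/2}\in\Q\,\zeta^{\sha}_S(2,\dots)$, it would be enough to produce, for each $j$, some symmetric shuffle values whose leading $\zeta(2)$-part spans $\zeta(2)^j\mathcal{Z}_{k-2j}$ modulo $\zeta(2)^{j+1}$, and then conclude by descending induction on $j$.
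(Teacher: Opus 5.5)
\textbf{There is a genuine gap, at exactly the step you flag.} Everything before it is sound: $\mathcal{Z}^{S,\sha}_k\subseteq\mathcal{Z}_k$, the use of \cref{thm:main-thm}, the rewriting $x=\sum a_{\mathds k}\zeta^{\sha}_S(\mathds k)+\zeta(2)y$ via \cref{thm:KZ0}, and the induction hypothesis for $y$. But the remaining step, $\zeta(2)\mathcal{Z}_{k-2}\subseteq\mathcal{Z}^{S,\sha}_k$, is the whole content of the corollary. Neither of your mechanisms supplies it.

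First, $\zeta^{\sha}_S$ does not satisfy a shuffle product, even modulo $\zeta(2)$. By \cref{eq:depth1StoZ}, $\zeta_S(1)\zeta_S(2)=0$, while $\zeta_S((1)\sha(2))=2\zeta_S(1,2)+\zeta_S(2,1)=3Z_S(3)$. So your ``correction terms'' carry all the content.

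Second, suppose you instead transport the harmonic product of $\zeta^*_S$ through \cref{thm:KZ0}. With $\Delta=\zeta^{\sha}_S-\zeta^*_S$ you get
\[
\zeta^{\sha}_S(2)\,\zeta^{\sha}_S(\mathds l)-\zeta^{\sha}_S((2)*\mathds l)=2\zeta(2)\Delta(\mathds l)-\Delta((2)*\mathds l).
\]
Here \cref{thm:KZ0} only puts $\Delta((2)*\mathds l)$ in $\zeta(2)\mathcal{Z}_{k-2}$, which is the very space you are trying to reach. So the nested induction never descends. Your fallback (descending induction on the $\zeta(2)$-filtration) is a correct reduction, but it presupposes exactly this missing estimate.

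In fact no argument using only $\mathcal{Z}_\bullet=\mathcal{Z}^{S,*}_\bullet$, the harmonic product of $\zeta^*_S$, and the congruence of \cref{thm:KZ0} can work. Replace each $\zeta^{\sha}_S(\mathds k)$ by the projection of $\zeta^*_S(\mathds k)$ onto a complement of $\zeta(2)\mathcal{Z}_{k-2}$ in $\mathcal{Z}_k$. This preserves the congruence, but the new values span a proper subspace whenever $\zeta(2)\mathcal{Z}_{k-2}\neq0$.

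\textbf{The missing input is the exact comparison,} which your remark about odd zeta values cancelling already points to. Two facts give it. First, $\rho=A(d/dT)$. Second, $\frac{d}{dT}\zeta^*(\mathds k;T)$ deletes a final entry $1$, and vanishes if $k_r\geq2$. Consequently the sum $\sum_i(-1)^{k_{i+1}+\cdots+k_r}\zeta^*(k_1,\dots,k_i;T_1)\zeta^*(k_r,\dots,k_{i+1};T_2)$ depends only on $T_1-T_2$. Now apply $A(\partial_{T_1})A(\partial_{T_2})$, and use $A(u)A(-u)=\Gamma(1+u)\Gamma(1-u)=\pi u/\sin\pi u=\sum_{m\geq0}e_{2m}u^{2m}$ with $e_{2m}\in\Q\,\zeta(2)^m$. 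This gives
\[
\zeta^{\sha}_S(\mathds k)=\sum_{m\geq0}e_{2m}\sum_{\mathds k=(\mathds a,\{1\}^{2m},\mathds b)}(-1)^{\wt(\mathds b)}\zeta^*(\mathds a;0)\,\zeta^*(\overleftarrow{\mathds b};0).
\]
So the correction is not of the shape $\sum_j c_j\zeta(2)^j\zeta^*_S(\mathds k_j)$ that you guessed.

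From this formula the needed identity follows. In every index occurring in $(2)*\mathds l$, the entry carrying the $2$ is $\geq2$, so it never lies in a block $\{1\}^{2m}$. Deconcatenation is compatible with the harmonic product. Both $\zeta^*(\,\cdot\,;0)$ and $\mathds b\mapsto(-1)^{\wt(\mathds b)}\zeta^*(\overleftarrow{\mathds b};0)$ are harmonic-product homomorphisms. Together these give the exact identity
\[
\zeta^{\sha}_S((2)*\mathds l)=2\zeta(2)\,\zeta^{\sha}_S(\mathds l).
\]
Hence $\zeta(2)\zeta^{\sha}_S(\mathds l)=\tfrac12\zeta^{\sha}_S((2)*\mathds l)\in\mathcal{Z}^{S,\sha}_k$, and your induction then closes as written.

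The paper gives no argument of its own here, only a pointer to Section 6.1 of Yasuda. Still, some exact input of this kind, beyond \cref{thm:KZ0}, is what any derivation from \cref{thm:main-thm} needs.
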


In this paper, we give another proof of \cref{thm:main-thm}. Yasuda originally proved this theorem in \cite{Yas16} by introducing certain real numbers $\zeta^{\natural,\mathcal{F}}(\mathds{k})$, defined as $\mathbb{Q}$-linear combinations of $\zeta_{\mathcal{S}}^{*}(\mathds{k})$. Here, we instead consider the real numbers $\zeta_2(\mathds{k})$, which appear as coefficients of the multitangent functions $\Psi_{\mathds{k}}(z)$.

It follows immediately from the definitions of $*$-symmetric multiple zeta values and stuffle regularized multiple zeta values that $\mathcal{Z}_k^{S,*}\subset \mathcal{Z}_k$. Thus, it remains to show that every element of $\mathcal{Z}_k$ can be expressed in terms of elements of $\mathcal{Z}_k^{S,*}$. We prove this in \Cref{sec:proof-of-thm}. Based on this proof, in \Cref{sec:algorithm}, we give an algorithm for obtaining such expressions of MZVs in terms of $\Sha$-SMZVs and products of $\Sha$-regularized MZVs. In particular, we carry this out explicitly for MZVs of depths one and two. In \Cref{sec:depth2-rev}, we give another expression for $Z_S(r,s)\in\mathcal{Z}/\zeta(2)\mathcal{Z}$ in terms of SMZVs. In \Cref{sec:depth3}, we study some properties of the spaces of SMZVs and FMZVs of depth three. Finally, \Cref{sec:def-DFMZV} presents the equivalence of two different definitions of $Z_{\mathcal{A}}(r,s)$.

\section*{Acknowledgments}
The author expresses his gratitude to his academic advisor Professor Masanobu Kaneko and Professor Hiroyuki Ochiai. The author would also like to thank Steven Charlton and Minoru Hirose for their valuable advice on this work.

\section{Proof of \cref{thm:main-thm}}\label{sec:proof-of-thm}

\subsection{Preparation for the proof}

Here, we recall several facts needed for the proof of \cref{thm:main-thm}.

\begin{thm}[Proposition 8 and Theorem 5 in \cite{KZ}]\label{thm:KZ}
We have the following.
\begin{enumerate}
\item $\zeta_{S}^{*}(\mathds{k})$ satisfies stuffle product. In particular, $\mathcal{Z}_r^{S,*}\cdot \mathcal{Z}_s^{S,*}\subset \mathcal{Z}_{r+s}^{S,*}$.
\item For any $\mathds{k},\mathds{k}'\in\mathcal{I}$, we have $\zeta_{S}^{\sha}(\mathds{k}\,\Sha\,\mathds{k}') = (-1)^{\wt(\mathds{k}')}\zeta_{S}^{\sha}(\mathds{k},\overleftarrow{\mathds{k}'}))$.
\end{enumerate}
Here, $\overleftarrow{\mathds{k}}$ is the reversal of $\mathds{k}$.
\end{thm}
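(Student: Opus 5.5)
Since the statement has two parts of different nature, I would treat them separately, both through the Hopf-algebraic description of regularized MZVs. For (1), I would work in Hoffman's quasi-shuffle (stuffle) algebra $\mathfrak{H}^1_*$ on the letters $z_k$ ($k\geq1$), where $z_{k_1}\cdots z_{k_r}$ corresponds to $(k_1,\dots,k_r)$. Equip it with the deconcatenation coproduct $\Delta$; this makes it a commutative bialgebra. By \cite{IKZ06}, $Z^*_T\colon w\mapsto \zeta^*(w;T)$ is a $\Q[T]$-valued algebra homomorphism for $*$. Let $\tau(z_{k_1}\cdots z_{k_r})\ceq(-1)^{k_1+\cdots+k_r}z_{k_r}\cdots z_{k_1}$. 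Then \cref{eq:def-stu-sym} reads
$$\zeta_S^*(w)=m\circ(Z^*_T\otimes Z^*_T\circ\tau)\circ\Delta(w).$$

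The key steps, in order, are as follows. First, check that $\tau$ is an algebra automorphism of $(\mathfrak{H}^1,*)$. Reversal respects the recursive definition of $*$ read from the right instead of the left, and the sign is multiplicative in the weight, which is additive under $*$. Second, use that $\Delta$ is an algebra morphism $\mathfrak{H}^1_*\to\mathfrak{H}^1_*\otimes\mathfrak{H}^1_*$. Third, note that the multiplication $m$ of the commutative target is an algebra homomorphism. The composite is therefore a homomorphism, so $\zeta_S^*(w*w')=\zeta_S^*(w)\zeta_S^*(w')$. Since $w*w'$ is a $\Z$-combination of indices of weight $\wt(w)+\wt(w')$, this gives $\mathcal{Z}_r^{S,*}\mathcal{Z}_s^{S,*}\subset\mathcal{Z}_{r+s}^{S,*}$. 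The $T$-independence from \cref{thm:KZ0} lets us fix $T$ throughout.

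For (2), I would pass to the generating series. Let $\Phi_T=\sum_w \zeta^{\sha}(w;T)\,w$ over words in $x,y$, which is group-like for $\sha$. The point is to rewrite \cref{eq:def-shu-sym} in terms of $\Phi_T$ as the coefficient extraction, on words ending in $y$, of a product $\Phi_T\cdot \widetilde{\Phi}_T$. Here $\widetilde{\Phi}_T$ is obtained from $\Phi_T$ by the antipode $S(a_1\cdots a_n)=(-1)^n a_n\cdots a_1$ of the shuffle Hopf algebra, combined with a suitable swap/sign in the letters so that $y x^{k-1}$ blocks are read in reversed index order. One then evaluates $\zeta_S^{\sha}(\mathds{k}\,\Sha\,\mathds{k}')$ by writing the shuffle of indices as a shuffle of the $y$-words. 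The identity $\sum_{(v)}v_{(1)}S(v_{(2)})=\varepsilon(v)$ lets the shuffled factor $\mathds{k}'$ migrate into the second half of the formula, where it appears reversed with sign $(-1)^{\wt(\mathds{k}')}$. This migration uses the multiplicativity $\langle\Phi_T,u\sha v\rangle=\langle\Phi_T,u\rangle\langle\Phi_T,v\rangle$ and the compatibility of deconcatenation with shuffle.

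The main obstacle is this bookkeeping in (2). The index shuffle $\mathds{k}\,\Sha\,\mathds{k}'$ in \cref{thm:KZ} refers to the shuffle of the words $y x^{k_1-1}\cdots$. However, reversing a word in $x,y$ does not reverse the index: $yx^{k-1}$ becomes $x^{k-1}y$. One therefore has to convert the reversed words back to the $y\cdots$-form, using the duality-type symmetry of the shuffle regularization or, equivalently, the behaviour of $\Phi_T$ under $x\leftrightarrow y$ together with the regularization at $T$. I would first try to handle this by proving the identity for the generating function in noncommutative variables, where the antipode identity is a clean algebraic statement. I would then verify that the coefficient extraction on words ending in $y$ commutes with these manipulations, and check the signs on depth one and two examples before writing the general argument.
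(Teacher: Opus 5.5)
The paper does not prove this statement; it quotes it from Kaneko--Zagier, so your proposal has to stand on its own. Part (1) does. Deconcatenation is multiplicative for the quasi-shuffle product, and the signed reversal $\tau$ is a $*$-automorphism because weight is additive under $*$. The convolution $m\circ(Z^*_T\otimes Z^*_T\circ\tau)\circ\Delta$ of two algebra morphisms into a commutative algebra is therefore again an algebra morphism. Part (2), however, has a genuine gap.

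\emph{Why your plan for (2) does not work.} If $\widetilde\Phi_T=S(\Phi_T)=\Phi_T^{-1}$, then $\Phi_T\widetilde\Phi_T=1$. That is, $\sum_{w=uv}\langle\Phi_T,u\rangle\langle\Phi_T^{-1},v\rangle=0$ for every nonempty word $w$, and this is all the antipode identity you invoke contains.
\begin{itemize}
\item It runs over \emph{all} cuts of a word, not only cuts at the starts of the blocks $yx^{k-1}$.
\item It contains no mechanism that moves $\mathds{k}'$ to the end in reversed order.
\end{itemize}
Your proposed repair through duality or the swap $x\leftrightarrow y$ is the wrong tool. Swapping letters sends $x^{k-1}y$ to $y^{k-1}x$, not to $yx^{k-1}$. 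It would also import the duality relation among MZVs, whereas (2) is a purely formal consequence of group-likeness.

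\emph{The missing idea} is to insert one extra letter $y$. Put $w_{\mathds{k}}=yx^{k_1-1}\cdots yx^{k_r-1}$. The factorizations $w_{\mathds{k}}\,y=u\,y\,v$ are exactly $u=w_{(k_1,\dots,k_i)}$ and $v=x^{k_{i+1}-1}y\cdots x^{k_r-1}y$ for $0\le i\le r$. Moreover
$$\langle\Phi_T^{-1},v\rangle=\langle\Phi_T,S(v)\rangle=(-1)^{k_{i+1}+\cdots+k_r}\zeta^{\sha}(k_r,\dots,k_{i+1};T),$$
and hence
$$\zeta_S^{\sha}(\mathds{k})=\langle\Phi_T\,y\,\Phi_T^{-1},\,w_{\mathds{k}}\,y\rangle .$$
So the letter reversal performed by $S$ is no obstacle: the appended $y$ absorbs it.

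Since $\Phi_T$ is group-like, $L=\Phi_T\,y\,\Phi_T^{-1}$ is primitive. Every Lie series satisfies the classical identity
$$\langle L,(u\,\Sha\,v)\,a\rangle=(-1)^{|v|}\langle L,u\,a\,\overleftarrow{v}\rangle$$
for words $u,v$ and a letter $a$. Take $u=w_{\mathds{k}}$, $v=w_{\mathds{k}'}$, $a=y$. Then $w_{\mathds{k}}\,y\,\overleftarrow{w_{\mathds{k}'}}=w_{(\mathds{k},\overleftarrow{\mathds{k}'})}\,y$ and $|w_{\mathds{k}'}|=\wt(\mathds{k}')$, which gives (2) at once.

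As a bonus, writing $\Phi_T=\Phi_0e^{Ty}$ shows $L=\Phi_0\,y\,\Phi_0^{-1}$, which explains the $T$-independence. Neither the conjugate $\Phi_T\,y\,\Phi_T^{-1}$ nor this Lie-element identity appears in your sketch, and they are what does the work.
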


For integers $k_1,\dots,k_r\geq 1$ with $k_1,k_r\geq 2$, we define the multitangent function by
$$\Psi_{k_1,\dots,k_r}(z) \ceq \sum_{n_1<\dots<n_r}\frac{1}{(z+n_1)^{k_1}\cdots(z+n_r)^{k_r}}.$$
Let $\mathcal{I}_{\mathrm{mt}} \ceq \{(k_1,\dots,k_r)\in \mathcal{I}\mid k_1,k_r\geq 2\}$.

\begin{thm}[Theorem 3 in \cite{Bou14}]
For any index $\mathds{k}\in \mathcal{I}_{\mathrm{mt}}$, there exist real numbers $\zeta_a(\mathds{k})\in \mathcal{Z}_{\wt(\mathds{k})-a}$ such that
$$ \Psi_{\mathds{k}}(z) = \sum_{a\geq 2}\zeta_a(\mathds{k})\Psi_{a}(z), $$
where $\zeta_a(\mathds{k})$ is given by
\begin{equation}\label{eq:zeta_a}
\zeta_a(k_1,\dots,k_r)
=\sum_{i=1}^{r}\sum_{\substack{n_1,\dots,n_{r}\geq 1\\n_1+\cdots+n_r=\wt(\mathds{k})\\n_i=a}}(-1)^{n_1+\cdots+n_i+k_i}\prod_{\substack{j=1\\j\neq i}}^{r}\binom{n_j-1}{k_j-1}\zeta(n_{i-1},\dots,n_1)\zeta(n_{i+1},\dots,n_r).
\end{equation}
\end{thm}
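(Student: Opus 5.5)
The plan is to prove the identity by partial fractions at the level of individual summands, and then resum over the summation indices. Fix integers $m_1<\dots<m_r$; I write $m_j$ for the summation variables to avoid a clash with the exponents $n_j$ in \cref{eq:zeta_a}. As a rational function of $z$, $\prod_j (z+m_j)^{-k_j}$ has poles only at $z=-m_i$. To find its principal part at $z=-m_i$, expand each factor with $j\neq i$ around that point:
$$\frac{1}{(z+m_j)^{k_j}}=\sum_{\ell\geq 0}\binom{-k_j}{\ell}\frac{(z+m_i)^{\ell}}{(m_j-m_i)^{k_j+\ell}}.$$
Put $n_j=k_j+\ell_j$, so that $\binom{-k_j}{\ell_j}=(-1)^{n_j-k_j}\binom{n_j-1}{k_j-1}$. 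The coefficient of $(z+m_i)^{-a}$ then comes from the tuples with $\sum_{j\neq i}\ell_j=k_i-a$, that is, with $n_i:=a$ and $n_1+\dots+n_r=\wt(\mathds{k})$. This gives
$$\prod_{j}\frac{1}{(z+m_j)^{k_j}}=\sum_{i=1}^r\sum_{a\ge1}\frac{1}{(z+m_i)^a}\sum_{\substack{n_1+\dots+n_r=\wt(\mathds{k})\\ n_i=a}}\prod_{j\ne i}(-1)^{n_j-k_j}\binom{n_j-1}{k_j-1}\frac{1}{(m_j-m_i)^{n_j}}.$$
Only tuples with $n_j\ge k_j$ contribute, because otherwise the binomial coefficient vanishes.

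Next, sum over $m_1<\dots<m_r$, grouping by $i$ and by the value $m_i=m$. The remaining sums factor. For $j>i$, substitute $d_j=m_j-m>0$; this gives $\zeta(n_{i+1},\dots,n_r)$. For $j<i$, substitute $d_j=m-m_j>0$; the ordering reverses, giving $\zeta(n_{i-1},\dots,n_1)$, and each factor picks up the sign $(-1)^{n_j}$. Neither factor depends on $m$. Both are convergent MZVs because $n_r\ge k_r\ge2$ and $n_1\ge k_1\ge 2$; this is exactly where the hypothesis $\mathds{k}\in\mathcal{I}_{\mathrm{mt}}$ is used. Collecting signs, $\prod_{j\neq i}(-1)^{n_j-k_j}\cdot\prod_{j<i}(-1)^{n_j}$ should reduce, using $\sum_j n_j=\sum_j k_j$, to $(-1)^{n_1+\dots+n_i+k_i}$. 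I would check this parity bookkeeping explicitly, since it is the only place where sign errors can enter. Summing $(z+m)^{-a}$ over $m\in\Z$ then yields $\Psi_a(z)$ with coefficient $\zeta_a(\mathds{k})$ exactly as in \cref{eq:zeta_a}. That $\zeta_a(\mathds{k})$ lies in $\mathcal{Z}_{\wt(\mathds{k})-a}$ is immediate, since each product of two MZVs has weight $\wt(\mathds{k})-a$.

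The main obstacle is justifying the interchange of summations and handling the terms with $a=1$, for which $\Psi_1$ does not converge absolutely. To deal with this, I would first restrict to $|m_j|\le N$ for all $j$, so that everything is a finite sum, and carry out the regrouping exactly. In the truncated version, the inner sums over $d_j$ are cut off at ranges depending on $m$ and $N$, and then $N\to\infty$.

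For $a\ge2$, dominated convergence applies, because $|z+m|^{-a}$ is summable and the truncated MZV factors are bounded by the full ones. For $a=1$, I would use that for each fixed tuple the residues sum to zero: the rational function decays like $|z|^{-\wt(\mathds{k})}$ with $\wt(\mathds{k})\ge2$. Equivalently, $\zeta_1$-type combinations cancel. I would show that the truncated $a=1$ contribution is $\sum_{|m|\le N} c_N(m)/(z+m)$, where $c_N(m)$ tends to the formal constant $\zeta_1(\mathds{k})$ and that constant vanishes. The error terms $c_N(m)-c_\infty$ are controlled by tails of convergent MZVs near the boundary $|m|\approx N$. An estimate of the form $O\bigl(\sum_{|m|\le N}\mathrm{tail}(N-|m|)/|z+m|\bigr)\to0$ should then finish the argument.

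If this estimate becomes messy, I would fall back on a different route. Both sides are $1$-periodic meromorphic functions with poles only at the integers. Comparing principal parts, which is exactly the computation above, with the $a=1$ residue equal to $0$, shows that the difference is an entire periodic function tending to $0$ as $\operatorname{Im} z\to\infty$. Hence the difference vanishes identically. This avoids the delicate truncation.
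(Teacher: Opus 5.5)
Your proof is correct. The paper gives no argument of its own for this statement; it is quoted from Bouillot \cite{Bou14}. What you propose is the standard partial-fraction proof:
\begin{itemize}
\item expand each summand at its poles;
\item resum the coefficients over translates into products of convergent MZVs;
\item dispose of the $\Psi_1$ term.
\end{itemize}
Your sign check is right: $\sum_{j\neq i}(n_j-k_j)=k_i-a$ gives $\prod_{j\ne i}(-1)^{n_j-k_j}\prod_{j<i}(-1)^{n_j}=(-1)^{n_1+\cdots+n_i+k_i}$. The steps you left as intentions all go through, but four of them should be written out.

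\emph{From residues to $\zeta_1(\mathds{k})=0$.} Write $C_{i,1}(\mathbf m)$ for the residue of $\prod_j(z+m_j)^{-k_j}$ at $z=-m_i$. The passage from $\sum_i C_{i,1}(\mathbf m)=0$ to $\zeta_1(\mathds{k})=0$ is not a literal equivalence. Since $C_{i,1}$ depends only on the differences $m_j-m_i$, one has $\zeta_1(\mathds{k})=\sum_i\sum_{[\mathbf m]}C_{i,1}(\mathbf m)$, with $[\mathbf m]$ running over translation classes of tuples. You may swap $\sum_i$ and $\sum_{[\mathbf m]}$ because each $\sum_{\mathbf m:\,m_i=0}|C_{i,1}(\mathbf m)|$ is bounded by a finite sum of products of convergent MZVs. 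This uses the same convergence you already noted, coming from $n_1\ge k_1\ge2$ and $n_r\ge k_r\ge2$.

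\emph{The truncation estimate.} The tails are not $O(1/L)$, because interior exponents may equal $1$; for instance, the tail of $\zeta(1,2)$ beyond $L$ is of order $\log L/L$. The correct bound is $|c_N(m)|=O\bigl(\log^{r}(N-|m|+2)/(N-|m|+1)\bigr)$. Splitting the sum at $|m|=N/2$ then gives $\sum_{|m|\le N}|c_N(m)|/|z+m|=O(\log^{r+1}N/N)$, which still tends to $0$, so your main route closes.

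\emph{The periodicity fallback.} You need decay both as $\operatorname{Im}z\to+\infty$ and as $\operatorname{Im}z\to-\infty$, since decay in one direction alone does not exclude $e^{2\pi i z}$. Both follow by dominated convergence from two facts:
\begin{itemize}
\item the bound $|z+m|\gg 1+|m|$, uniform for $0\le\operatorname{Re}z\le1$ and $|\operatorname{Im}z|\ge1$;
\item the summability of $\prod_j(1+|m_j|)^{-k_j}$ over $m_1<\cdots<m_r$ when $k_1,k_r\ge2$.
\end{itemize}

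\emph{Membership in $\mathcal{Z}_{\wt(\mathds{k})-a}$.} The claim $\zeta_a(\mathds{k})\in\mathcal{Z}_{\wt(\mathds{k})-a}$ needs more than the weight count. It uses the fact that a product of MZVs of weights $w_1,w_2$ lies in $\mathcal{Z}_{w_1+w_2}$, which follows from the stuffle product.
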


For an index $\mathds{k}=(k_1,\dots,k_r)\in \mathcal{I}_{\mathrm{mt}}$, we denote
$$^{\downarrow}\mathds{k}^{\downarrow} \ceq (k_1-1,k_2,\dots,k_{r-1},k_r-1)
\quad (\text{if }r=1,\text{ then } ^{\downarrow}(k)^{\downarrow} \ceq (k-2)).$$
Furthermore, for an index $\mathds{k}=(k_1,\dots,k_r)\in \mathcal{I}$, Hoffman's dual index $\mathds{k}^{\vee}$ is defined by
$$\mathds{k}^{\vee} = (\underbrace{1,\dots,1}_{k_1}+\underbrace{1,\dots,1}_{k_2}+1,\dots,1+\underbrace{1,\dots,1}_{k_r}).$$

\begin{thm}[Part of a result in \cite{Hir24}]\label{thm:Hirose}
For any index $\mathds{k}\in \mathcal{I}_{\mathrm{mt}}$, we have
$$\zeta_{\mathcal{S}}^{\sha}((^{\downarrow}\mathds{k}^{\downarrow})^{\vee}) = (-1)^{\wt(\mathds{k})}\zeta_2(\mathds{k}).$$
\end{thm}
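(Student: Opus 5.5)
The plan is to compute both sides as explicit sums of products of two shuffle-regularized MZVs, and then match the sums term by term.

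\emph{Right-hand side.} First I would interpret $\zeta_2(\mathds{k})$ analytically. For $a\geq 2$, the function $\Psi_a(z)=\sum_{n\in\Z}(z+n)^{-a}$ has principal part exactly $z^{-a}$ at $z=0$. So in the expansion $\Psi_{\mathds{k}}(z)=\sum_{a\geq 2}\zeta_a(\mathds{k})\Psi_a(z)$, the number $\zeta_2(\mathds{k})$ is the coefficient of $z^{-2}$ in the Laurent expansion of $\Psi_{\mathds{k}}(z)$ at $0$. Expanding $\Psi_{\mathds{k}}(z)$ by the position $i$ with $n_i=0$ recovers \cref{eq:zeta_a} with $a=2$. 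This gives
\[
\zeta_2(\mathds{k})=\sum_{i=1}^r (-1)^{k_i+\cdots}\,A_i(\mathds{k})\,B_i(\mathds{k}),
\]
where $A_i$ (resp. $B_i$) is the binomial-weighted sum of $\zeta(n_{i-1},\dots,n_1)$ (resp. $\zeta(n_{i+1},\dots,n_r)$). The excess weight of $n_i$ over $k_i$ is $2-k_i$, which is distributed between the two sides.

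\emph{Left-hand side.} Here I would use the standard formula for shuffle regularization with trailing ones:
\[
\zeta^{\sha}(\mathbf{l},\{1\}^m;0)=(-1)^m\sum \prod_j\binom{n_j-1}{l_j-1}\,\zeta(n_1,\dots,n_{s-1},n_s+m)\ \text{-type sums}.
\]
Read in reverse, this identifies each $A_i$ and $B_i$ with a single regularized value $\zeta^{\sha}(\mathbf{l};0)$, up to sign, for an index $\mathbf{l}$ ending in a string of $1$'s. Next I would apply duality, in its regularized form coming from the substitution $t\mapsto 1-t$ in iterated integrals, to rewrite these values in terms of the Hoffman dual. This step turns the binomial sums into a form compatible with the dual index $(^{\downarrow}\mathds{k}^{\downarrow})^{\vee}$. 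In the definition \cref{eq:def-shu-sym} of $\zeta_S^{\sha}$ for the index $(^{\downarrow}\mathds{k}^{\downarrow})^{\vee}$, the split points fall either inside a block of $1$'s or at block boundaries. Only the splits adjacent to the $i$-th block should survive, paired with $A_i$ and $B_i$; I expect the remaining split points to cancel, or to vanish because $\zeta^{\sha}(1;0)=0$.

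\emph{Main obstacle.} The hardest part will be the bookkeeping in this matching: tracking how lowering $k_1$ and $k_r$ by one (the $^{\downarrow}\cdot^{\downarrow}$ operation) and then dualizing shifts the blocks, and checking that the signs $(-1)^{k_{i+1}+\cdots}$ and $(-1)^{n_1+\cdots+n_i+k_i}$ combine into the global sign $(-1)^{\wt(\mathds{k})}$.

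To tame this, I would first encode everything with generating functions in the words $x,y$. Dualization is then just the anti-automorphism $x\leftrightarrow y$ with reversal. The binomial sums become coefficients of $(1-t)^{-1}$-type kernels, so the identity reduces to a comparison of two generating series. I would verify the cases $r=1,2$ by hand first to fix signs before doing the general computation.
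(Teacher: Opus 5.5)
Your plan works, and it necessarily differs from the paper, which gives no proof of this statement. The paper imports it from \cite{Hir24}, where it is part of a more general result, and only remarks after \cref{lem:yasuda-lem} that duality is used. Carried out, your computation is short and makes that remark precise: the only inputs are the shuffle-regularization formula and ordinary duality for convergent words. The matching is also cleaner than you expect.

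Take $r\ge 2$; for $r=1$ both sides equal $\delta_{k,2}$. Under $k\mapsto yx^{k-1}$, the index $(^{\downarrow}\mathds{k}^{\downarrow})^{\vee}$ is the word $y^{k_1-1}xy^{k_2-1}x\cdots xy^{k_{r-1}-1}xy^{k_r-2}$, of depth $\wt(\mathds{k})-r-1$. So \cref{eq:def-shu-sym} has exactly $\sum_i(k_i-1)$ cut points, one for each pair $(i,e)$ with $0\le e\le k_i-2$: cut after the first $e$ letters of the $i$-th $y$-block.

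For $i\ge 2$, put $u=y^{k_1-1}x\cdots xy^{k_{i-1}-1}$, which starts with $y$. The left factor at cut $(i,e)$ is then
\[
\zeta^{\sha}(uxy^{e};0)=(-1)^{e}\zeta\bigl((u\,\Sha\,y^{e})x\bigr)=(-1)^{e}\zeta\bigl(y(\tau(u)\,\Sha\,x^{e})\bigr)=(-1)^{e}\sum_{\sum_{j<i}(n_j-k_j)=e}\ \prod_{j<i}\binom{n_j-1}{k_j-1}\zeta(n_{i-1},\dots,n_1).
\]
For $i\le r-1$, the reversed suffix is $y^{k_r-1}x\cdots xy^{k_{i+1}-1}xy^{e'}$ with $e'=k_i-2-e$. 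It gives the analogous sum for $\zeta(n_{i+1},\dots,n_r)$ with total excess $e'$.

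At the two ends, the left factor for $i=1$ is $\zeta^{\sha}(\{1\}^{e};0)=\delta_{e,0}$, and the right factor for $i=r$ is $\zeta^{\sha}(\{1\}^{e'};0)=\delta_{e',0}$. These match the empty factors in \cref{eq:zeta_a}.

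For the signs, the left side contributes $(-1)^{k_i-2-e+k_{i+1}+\cdots+k_r}(-1)^{e+e'}$ and the right side contributes $(-1)^{\wt(\mathds{k})+n_1+\cdots+n_i+k_i}$. Both reduce to $(-1)^{e+k_{i+1}+\cdots+k_r}$. Hence each cut reproduces exactly one term of \cref{eq:zeta_a} with $a=2$, and nothing has to cancel. An interior $k_i=1$ gives neither cuts nor terms.

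Two points in your sketch should be corrected before you execute it.

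\emph{The regularization formula.} The formula you wrote is not the right one. The correct statement is $\zeta^{\sha}(uxy^{m};0)=(-1)^{m}\zeta((u\,\Sha\,y^{m})x)$, and it raises the depth. The depth-preserving binomial form appears only after applying $\tau$, and its binomials attach to the entries of the $\tau$-dual of $\mathbf{l}$. Taken literally, your formula gives $\zeta^{\sha}(2,1;0)=-\zeta(3)$, whereas the true value is $-2\zeta(3)$.

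\emph{Hoffman's dual versus $\tau$.} Hoffman's dual is not the anti-automorphism $\tau$. It is the swap $x\leftrightarrow y$ \emph{without} reversal, applied after the leading $y$, and that is what produces the word above. $\tau$ is applied only to the convergent words $(u\,\Sha\,y^{e})x$, so no regularized form of duality is needed.

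Finally, your first step, extracting $\zeta_2(\mathds{k})$ from the Laurent expansion, is correct but unnecessary, since \cref{eq:zeta_a} is given.
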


\subsection{Proof of \cref{thm:main-thm}}

In the following, by relations among multiple zeta values, we mean relations among those of the same weight. Although some parts of the argument using generating functions may appear to ignore the weight, it is in fact distinguished by the total degree of the generating functions. Therefore, in our argument, we do not consider relations between different weights. (It is expected that there are no $\Q$-linear relations among multiple zeta values of different weights.)

To prove \cref{thm:main-thm}, it suffices to show the following.

\begin{thm}\label{prop:existence-mzv}
Any multiple zeta value can be expressed as a $\Q$-linear combination of $\Sha$-symmetric multiple zeta values and products of multiple zeta values.
\end{thm}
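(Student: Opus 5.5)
The plan is to combine \cref{thm:Hirose} with the explicit formula \cref{eq:zeta_a} for $a=2$, and to show that modulo products of MZVs the right-hand side of \cref{eq:zeta_a} is a $\Q$-linear combination of MZVs that we can invert. Throughout, let $\mathcal{P}$ denote the $\Q$-span of products of (at least two) MZVs of positive weight, and work modulo $\mathcal{P}$. By \cref{thm:Hirose}, $\zeta_2(\mathds{k})$ is, up to sign, a $\Sha$-SMZV for every $\mathds{k}\in\mathcal{I}_{\mathrm{mt}}$. So it suffices to show that, modulo $\mathcal{P}$, the values $\zeta_2(\mathds{k})$ with $\mathds{k}\in\mathcal{I}_{\mathrm{mt}}$ of weight $k+2$ span all of $\mathcal{Z}_k$.

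\textbf{Step 1: reduce $\zeta_2$ modulo products.} In \cref{eq:zeta_a} with $a=2$, a term with $2\le i\le r-1$ is a product $\zeta(n_{i-1},\dots,n_1)\zeta(n_{i+1},\dots,n_r)$ of two MZVs of positive weight, so it lies in $\mathcal{P}$. We need to check that these are genuine (convergent) MZVs, or else handle them via regularization. Convergence is guaranteed because $n_1\ge k_1\ge 2$ and $n_r\ge k_r\ge 2$ whenever the binomials are nonzero. Only the terms $i=1$ and $i=r$ survive. This gives
$$\zeta_2(\mathds{k})\equiv (-1)^{k_1}\!\!\sum_{n_2+\cdots+n_r=w-2}\prod_{j\ge2}\binom{n_j-1}{k_j-1}\zeta(n_2,\dots,n_r)+(-1)^{w}\!\!\sum_{n_1+\cdots+n_{r-1}=w-2}\prod_{j<r}\binom{n_j-1}{k_j-1}\zeta(n_{r-1},\dots,n_1)$$
modulo $\mathcal{P}$, with $w=\wt(\mathds{k})$. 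In the first sum $n_2,\dots,n_r$ absorb $k_1-2$ extra units beyond $k_2,\dots,k_r$; the second sum is symmetric, with $k_r-2$ extra units distributed and the order reversed.

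\textbf{Step 2: choose special indices and triangulate.} I would specialize to $\mathds{k}=(2,\mathds{l})$ with $\mathds{l}=(l_1,\dots,l_s)$ and $l_s\ge2$. The first sum then collapses to the single term $\zeta(\mathds{l})$, since there are no extra units and all binomials equal $1$. The second sum becomes
$$(-1)^{w}\sum \binom{\ast}{}\cdots\zeta(n_{s},\dots,n_1),$$
a combination of reversed MZVs whose indices carry $l_s-2$ extra units spread over the first $s$ entries $(2,l_1,\dots,l_{s-1})$ reversed. Hence, modulo $\mathcal{P}$ and $\mathcal{Z}^{S,\Sha}$,
$$\zeta(\mathds{l})\equiv \pm\sum(\text{MZVs of index }(m_s,\dots,m_2,m_1)\text{ with } m_1\ge 2,\ \text{depth } s).$$
The task is to show this system determines every $\zeta(\mathds{l})$. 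I would order admissible indices of fixed weight and depth suitably, for instance by the last entry $l_s$ or lexicographically, and show that the map $\mathds{l}\mapsto$ (right side) has no fixed behavior that creates a degenerate system. Concretely, define the linear operator $T$ on the formal $\Q$-span of admissible indices of weight $k$ and depth $s$ by $\zeta(\mathds{l})\equiv \pm T(\mathds{l})$. We then need $1\mp T$ to be invertible. A cleaner route is to use generating functions: encode the depth-$s$ part as a polynomial generating function in variables $x_1,\dots,x_s$. The binomial sums become substitutions $x_j\mapsto x_j - x_s$-type shifts, so $T$ is a composition of reversal and translation. I would then check that $T$ is unipotent-plus-sign, or that $T^2$ relates to the identity, so that $1\mp T$ is invertible in each weight. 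Failing that, we can also use the freedom of choosing more general $\mathds{k}$ (with $k_1>2$) to obtain additional relations.

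\textbf{Main obstacle.} The hard step is the invertibility in Step 2. The relation may only hold up to the involution given by reversal, as in depth one, where $\zeta(k)$ appears with coefficient $1+(-1)^k$ and odd zetas need separate treatment. I expect to have to handle the case $1\mp T$ singular by using the additional indices $\mathds{k}$ with both $k_1,k_r>2$, together with the reversal formula \cref{thm:KZ}(2) and induction on depth. That induction would absorb lower-depth MZVs, including $\zeta(2)$-type terms, as already expressible. Possibly $Z_S$-style regularized terms, which arise from the duality in \cref{thm:Hirose}, also enter here.
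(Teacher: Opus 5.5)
Your proposal has a genuine gap: Step~2 cannot be carried out with the relations you are using. Your reduction in Step~1 is really a restatement of the theorem. As $\mathds{k}$ runs over $\mathcal{I}_{\mathrm{mt}}$, the index $(^{\downarrow}\mathds{k}^{\downarrow})^{\vee}$ runs over all of $\mathcal{I}$. So by \cref{thm:Hirose} the values $\zeta_2(\mathds{k})$ are, up to sign, exactly all $\Sha$-SMZVs, and the whole content sits in the invertibility of $1\mp T$.

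That invertibility is false. (Also, in \cref{eq:zeta_a} the sign of the $i=r$ term is $(-1)^{w+k_r}$, not $(-1)^{w}$.) For $\mathds{k}=(2,l,2)$, the $i=1$ and $i=3$ terms are $\zeta(l,2)$ and $(-1)^{l}\zeta(l,2)$. So for odd $l$ you get $\zeta_2(2,l,2)\in\mathcal{P}$, which is an empty relation. Concretely, in weight $5$ and depth $2$, take the six indices $(2,1,4)$, $(2,2,3)$, $(2,3,2)$, $(3,1,3)$, $(3,2,2)$, $(4,1,2)$. These are all of $\mathcal{I}_{\mathrm{mt}}$ in weight $7$ and depth $3$, so they include the cases $k_1>2$. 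Modulo $\mathcal{P}$ they give only $\pm\bigl(2\zeta(1,4)+2\zeta(2,3)+\zeta(3,2)\bigr)$, $\pm\bigl(3\zeta(2,3)+2\zeta(3,2)\bigr)$ and $0$. That is rank $2$ on three unknowns, so no argument using only Hirose's relations modulo products can reach all three values.

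Your fallbacks do not repair this. \cref{thm:KZ}(2) is an identity among SMZVs and is vacuous modulo SMZVs. Induction on depth does not help either, since $T$ preserves depth and nothing in your setup mixes depths.

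The paper supplies two further families of relations. To state them it works with the generating series of all $\zeta^{\sha}(\cdot\,;0)$, including non-admissible indices.

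The first is the reversal congruence coming from the definition of $\zeta^{\sha}_S$:
$\mathfrak{Z}^S(X_1,\dots,X_{r-1})\equiv\mathfrak{Z}(X_1,\dots,X_{r-1})+(-1)^{r-1}\mathfrak{Z}(-X_{r-1},\dots,-X_1)$ modulo products. In your example, $\zeta^{\sha}_S(2,3)\equiv\zeta(2,3)-\zeta(3,2)$ together with $3\zeta(2,3)+2\zeta(3,2)\equiv0$ is exactly what closes the weight-$5$ system.

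The second is the shuffle-antipode relation $\mathfrak{A}_1=0$. It converts your reversed $i=r$ sum into a non-reversed one. Hirose's relation then becomes the one-term relation $\partial_{X_1}\partial_{X_r}\bigl[(X_1-X_r)\mathfrak{Z}(X_2-X_1,\dots,X_r-X_1)\bigr]\equiv0$.

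The paper also writes Hirose's theorem in integrated form, $\mathfrak{Z}^{\vee}=\mathfrak{A}_2-\mathfrak{A}_2|_{X_1=0}-\mathfrak{A}_2|_{X_r=0}+\mathfrak{A}_2|_{X_1=X_r=0}$. Combining this with the two relations above gives cyclic invariance, $\mathfrak{Z}(X_1,\dots,X_{r-1})\equiv\mathfrak{Z}(X_2,\dots,X_{r-1},X_1)$ modulo SMZVs and products.

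Cyclic invariance upgrades the one-term relation from $j=r$ to every $j$:
$\sum_{i\ge2}\partial_{X_i}\partial_{X_j}\bigl[(X_j-X_1)\mathfrak{Z}(X_2-X_1,\dots,X_r-X_1)\bigr]\equiv0$ for all $2\le j\le r$. \cref{lem:yasuda-lem}, applied in the variables $X_i-X_1$, then inverts this system explicitly via $\partial_{X_j}f=F_j-\frac{1}{k+r}\partial_{X_j}(X_jF)$. The cyclic-invariance step together with Yasuda's lemma is the invertibility argument your proposal is missing.
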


Indeed, the case $k=2$ is immediate, since $\mathcal{Z}_2=\mathcal{Z}_2^{S,\ast}$. Now assume, as the induction hypothesis on the weight, that $\mathcal{Z}_r=\mathcal{Z}_r^{S,\ast}$ for all $r<k$. Then, by part (1) of \cref{thm:KZ}, we obtain
$$\mathcal{Z}_{k}^+ \ceq \sum_{\substack{r,s\geq 2 \\ r+s=k}} \mathcal{Z}_{r}\cdot \mathcal{Z}_{s}= \sum_{\substack{r,s\geq 2 \\ r+s=k}} \mathcal{Z}_{r}^{S,*}\cdot \mathcal{Z}_{s}^{S,*}
\subset \mathcal{Z}_{k}^{S,*}.$$
Consequently, by \cref{thm:KZ0} and \cref{prop:existence-mzv}, we have
$$\mathcal{Z}_k^{S,*} = \mathcal{Z}_k^{S,*}+\mathcal{Z}_{k}^+ = \mathcal{Z}_k^{S,\sha}+\mathcal{Z}_{k}^+ = \mathcal{Z}_k.$$

In the following, we prove \cref{prop:existence-mzv}. To prove this, we introduce several generating functions:
\begin{align*}
\mathfrak{Z}(X_1,\dots,X_r) &\ceq \sum_{k_1,\dots,k_{r}\geq 1}\zeta^{\sha}(k_1,\dots,k_r;0)X_1^{k_1-1}\cdots X_r^{k_r-1},
\\
\mathfrak{Z}^{\vee}(X_1,\dots,X_r) &\ceq X_1X_r\sum_{k_1,\dots,k_{r}\geq 1}(-1)^{k_1+\cdots+k_r}\zeta^{\sha}_S((k_1,\dots,k_r)^{\vee})X_1^{k_1-1}\cdots X_r^{k_r-1},
\\
\mathfrak{Z}^S(X_1,\dots,X_r) &\ceq \sum_{i=0}^{r}(-1)^{r-i}\mathfrak{Z}(X_1,\dots,X_i)
\mathfrak{Z}(-X_r,\dots,-X_{i+1}),
\intertext{and}
\mathfrak{A}_n(X_1,\dots,X_r)
&\ceq \sum_{i=1}^{r}(-1)^{i-1} X_i^{n-1} \mathfrak{Z}(X_i-X_{i-1},\dots,X_i-X_1)\mathfrak{Z}(X_{i+1}-X_{i},\dots,X_r-X_i).
\end{align*}
Note that $\zeta^{\bullet}(\mathds{k};0)\in \mathcal{Z}$ for each $\mathds{k}\in\mathcal{I}$ and $\bullet\in\{\ast,\Sha\}$. Then, by a straightforward calculation together with \cref{eq:def-shu-sym} and \cref{eq:zeta_a}, we obtain
\begin{align*}
\mathfrak{Z}^S(X_1,\dots,X_r) &= \sum_{k_1,\dots,k_{r}\geq 1}\zeta^{\sha}_S(k_1,\dots,k_r)X_1^{k_1-1}\cdots X_r^{k_r-1},
\\
\frac{\partial}{\partial X_1}\frac{\partial}{\partial X_r}\mathfrak{A}_2(X_1,\dots,X_r)
&= \sum_{k_1,\dots,k_{r}\geq 1}k_1k_r\zeta_2(k_1+1,k_2,\dots,k_{r-1},k_r+1) X_1^{k_1-1}\cdots X_r^{k_r-1}.
\end{align*}
Hence, by \cref{thm:Hirose}, we have
\begin{equation}\label{eq:A2-ZS}
\frac{\partial}{\partial X_1}\frac{\partial}{\partial X_r}\mathfrak{A}_2(X_1,\dots,X_r)
= \frac{\partial}{\partial X_1}\frac{\partial}{\partial X_r}\mathfrak{Z}^{\vee}(X_1,\dots,X_r).
\end{equation}
On the other hand, we have the shuffle-antipode relation (see Proposition 3.3 in \cite{Bac22}):
\begin{equation*}
\sum_{i=1}^{r}\sum_{\substack{n_1,\dots,n_{r}\geq 1\\n_1+\cdots+n_r=\wt(\mathds{k})\\n_i=1}}(-1)^{n_1+\cdots+n_i+k_i}\prod_{\substack{j=1\\j\neq i}}^{r}\binom{n_j-1}{k_j-1}\zeta^{\sha}(n_{i-1},\dots,n_1)\zeta^{\sha}(n_{i+1},\dots,n_r) = 0.
\end{equation*}
By the above equation and a straightforward calculation, we have
$$\mathfrak{A}_1(X_1,\dots,X_r) = 0.$$
To prove \cref{prop:existence-mzv}, it suffices to prove the following statement.

\begin{thm}\label{thm:to-prove}
For any integer $r\geq 1$, we have $\frac{\partial}{\partial X_r}\mathfrak{Z}(X_1,\dots,X_{r})=0$ under taking modulo $\Sha$-symmetric multiple zeta values and products of multiple zeta values.
\end{thm}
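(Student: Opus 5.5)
Throughout, write $\equiv$ for congruence modulo the $\Q$-span of $\Sha$-symmetric multiple zeta values and products of multiple zeta values of the same weight. The plan is to exploit the two identities already available: $\mathfrak{A}_1=0$ (shuffle antipode) and \cref{eq:A2-ZS}. Since $\mathfrak{Z}^{\vee}\equiv 0$, the second gives $\partial_{X_1}\partial_{X_r}\mathfrak{A}_2\equiv 0$. In $\mathfrak{A}_n$, every term with $2\le i\le r-1$ is a product of two generating functions with nonconstant arguments. So every coefficient of such a term whose two factors both have positive depth is a product of MZVs, hence $\equiv 0$.

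Modulo products, therefore, only the terms $i=1$ and $i=r$ survive, together with the pieces where one factor is the constant $1$:
\begin{align*}
\mathfrak{A}_n(X_1,\dots,X_r)\equiv X_1^{n-1}\mathfrak{Z}(X_2-X_1,\dots,X_r-X_1)+(-1)^{r-1}X_r^{n-1}\mathfrak{Z}(X_r-X_{r-1},\dots,X_r-X_1).
\end{align*}
This holds in depth $r-1$ for the unknown $\mathfrak{Z}$. Applying the congruence with $n=1$ (from $\mathfrak{A}_1=0$) and with $n=2$ (after applying $\partial_{X_1}\partial_{X_r}$) gives two linear functional congruences for the single unknown function $F:=\mathfrak{Z}$ in $r-1$ variables.

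Next I change variables to $Y_j=X_{j+1}-X_1$, so the first term is $F(Y_1,\dots,Y_{r-1})$. The second term is $F$ evaluated at a reversed, shifted argument, namely $(Y_{r-1}-Y_{r-2},\dots,Y_{r-1}-Y_1,Y_{r-1})$, a linear automorphism $\sigma$. The $n=1$ relation reads
\[
F+(-1)^{r-1}F\circ\sigma\equiv 0.
\]
The $n=2$ relation is
\[
\partial_{X_1}\partial_{X_r}\bigl[X_1F+(-1)^{r-1}X_rF\circ\sigma\bigr]\equiv 0.
\]
Substituting the first relation into the second eliminates $F\circ\sigma$. What remains is $\partial_{X_1}\partial_{X_r}\bigl[(X_1-X_r)F(Y)\bigr]\equiv0$ with $X_r-X_1=Y_{r-1}$ and $\partial_{X_r}=\partial_{Y_{r-1}}$. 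Since $F$ depends on $X_1$ only through all the $Y$'s, $\partial_{X_1}$ acts as $-\sum_j\partial_{Y_j}$. Expanding the product rule gives a first-order PDE-type congruence in $F$. From it I would extract $\partial_{Y_{r-1}}F\equiv 0$, which is the statement with $r$ replaced by $r-1$ (reindexing $r$).

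The main obstacle is this last extraction. The congruence obtained is something like
\[
\Bigl(1+\textstyle\sum_j\partial_{Y_j}\Bigr)\text{-type operators applied to } \partial_{Y_{r-1}}\bigl(Y_{r-1}F\bigr)\equiv 0,
\]
and one must invert such operators coefficientwise. My first approach is to compare homogeneous components of fixed total degree, since weight is fixed. The operator $\sum_j Y_j\partial_{Y_j}$ acts as multiplication by the degree, and $\partial_{Y_{r-1}}Y_{r-1}$ acts diagonally on monomials by $(m_{r-1}+1)$. This makes the operators triangular with nonzero diagonal over $\Q$, so they are invertible. If extra terms coupling variables appear, I would induct on the exponent of $Y_{r-1}$, or on depth, using the cases already proved in lower depth (where $\partial_{Y_{r-1}}\mathfrak{Z}\equiv0$ kills boundary contributions). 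The base case $r=1$ is separate: there the statement concerns depth-one values $\zeta(k)$, and it follows directly from $\zeta^{\sha}_S(k)$ being, up to sign, $2\zeta(k)$ for odd $k$, together with $\zeta(2m)\in\Q\zeta(2)^m$, which is a product of MZVs.
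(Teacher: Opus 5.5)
There is a genuine gap at the step you flag as the main obstacle, and it cannot be closed using only the two relations you work with. The congruence you reach is exactly
\[
D\,\partial_{Y_{r-1}}\bigl(Y_{r-1}F\bigr)\equiv 0,\qquad D:=\sum_{j=1}^{r-1}\partial_{Y_j},
\]
which is the single quantity $F_{r-1}$ of \cref{lem:yasuda-lem} in the $Y$-variables. The operator $D$ is not of the form $1+\cdots$, and it is not the Euler operator $\sum_j Y_j\partial_{Y_j}$. It lowers degree by one, and on homogeneous polynomials in two or more variables its kernel (the translation-invariant polynomials) is nonzero. So all you learn is that $\partial_{Y_{r-1}}(Y_{r-1}F)$ is translation invariant modulo SMZVs and products. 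The $\mathfrak{A}_1$-relation does not make up the difference.

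Concretely, take $r=3$ in weight $3$, where $F(Y_1,Y_2)=\zeta^{\sha}(2,1)Y_1+\zeta(1,2)Y_2=\zeta(3)(Y_2-2Y_1)$. Both of your relations reduce to $\zeta^{\sha}(2,1)+2\zeta(1,2)=0$, which is an identity; the only SMZV that enters is $\zeta^{\sha}_S(3)=0$. Yet $\partial_{Y_2}F=\zeta(3)$, and there are no products in weight $3$. Hence no manipulation of these two relations can give $\zeta(3)\equiv 0$; you need an input such as $\zeta^{\sha}_S(1,2)=3\zeta(3)$. The failure is systematic: in depth two and every odd weight $d+2$, the polynomial $F=\frac{c}{(d+1)Y_2}\bigl((Y_2-Y_1)^{d+1}-Y_1^{d+1}\bigr)$ satisfies both relations while $\partial_{Y_2}F\neq 0$.

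The paper uses two further inputs that you are missing.
\begin{enumerate}
\item The congruence $\mathfrak{Z}^S\equiv 0$ in the depth of the unknown itself, \cref{eq:relation1}. Combined with $\mathfrak{A}_1=0$, this gives \cref{eq:relation4}.
\item The \emph{undifferentiated} congruence $\mathfrak{Z}^{\vee}\equiv 0$, written as the four-term combination \cref{eq:relation5.5} of $\mathfrak{A}_2$. This keeps the SMZVs with $k_1=1$ or $k_r=1$, which $\partial_{X_1}\partial_{X_r}$ throws away.
\end{enumerate}
From these the paper derives the cyclic invariance \cref{eq:relation8}. That invariance transports your single relation to $F_j\equiv 0$ for every $j$. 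Only then does \cref{eq:coeff-explicit} yield $\partial_j f\equiv 0$, and that formula genuinely needs all the $F_j$, through $F=\sum_j F_j$.

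Separately, your base case is wrong as stated: $\zeta^{\sha}_S(k)=(1+(-1)^k)\zeta(k)$ vanishes for odd $k$, so odd single zeta values are not depth-one SMZVs. Your general step with a single $Y$-variable does handle depth one correctly, however. There $\partial_{Y_1}^2(Y_1F)\equiv 0$ forces $F$ to be constant, which recovers \cref{eq:OSS}.
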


\begin{proof}
In what follows, we write \(\overset{\mathrm{s}}{\equiv}\) and \(\overset{\mathrm{p}}{\equiv}\) to indicate congruence modulo \(\Sha\)-symmetric MZVs and modulo products of MZVs, respectively. After taking modulo, we obtain
\begin{align}
0 \overset{\mathrm{s}}{\equiv} \mathfrak{Z}^S(X_1,\dots,X_{r-1})
&\overset{\mathrm{p}}{\equiv} \mathfrak{Z}(X_1,\dots,X_{r-1})
+(-1)^{r-1}\mathfrak{Z}(-X_{r-1},\dots,-X_1),\label{eq:relation1}
\\
0 = \mathfrak{A}_1(X_1,\dots,X_r)
&\overset{\mathrm{p}}{\equiv} \mathfrak{Z}(X_2-X_1,\dots,X_r-X_1)
+ (-1)^{r-1} \mathfrak{Z}(X_r-X_{r-1},\dots,X_r-X_1),\label{eq:relation2}
\\
\mathfrak{A}_2(X_1,\dots,X_r)
&\overset{\mathrm{p}}{\equiv} X_1 \mathfrak{Z}(X_2-X_1,\dots,X_r-X_1)
+ (-1)^{r-1} X_r \mathfrak{Z}(X_r-X_{r-1},\dots,X_r-X_1).\label{eq:relation3}
\end{align}
By replacing $X_i$ with $X_i-X_r$ for $1\leq i\leq r-1$ in \cref{eq:relation1} and using \cref{eq:relation2}, we obtain
\begin{equation}\label{eq:relation4}
\mathfrak{Z}(X_1-X_r,\dots,X_{r-1}-X_r)
\overset{\mathrm{s,p}}{\equiv} \mathfrak{Z}(X_2-X_1,\dots,X_r-X_1).
\end{equation}
Multiplying \cref{eq:relation2} by $X_r$ and subtracting it from \cref{eq:relation3}, we obtain
\begin{equation}\label{eq:relation5}
\mathfrak{A}_2(X_1,\dots,X_r)
\overset{\mathrm{p}}{\equiv} (X_1-X_r) \mathfrak{Z}(X_2-X_1,\dots,X_r-X_1).
\end{equation}
On the other hand, from \cref{thm:Hirose}, we have
\begin{equation}\label{eq:relation5.5}
\begin{split}
0 \overset{\mathrm{s}}{\equiv} \mathfrak{Z}^{\vee}(X_1,\dots,X_r)
&= \mathfrak{A}_2(X_1,\dots,X_r) - \mathfrak{A}_2(0,X_2,\dots,X_r)
\\
&\quad\quad - \mathfrak{A}_2(X_1,\dots,X_{r-1},0) + \mathfrak{A}_2(0,X_2,\dots,X_{r-1},0).
\end{split}
\end{equation}
Thus, substituting \cref{eq:relation5} into \cref{eq:relation5.5}, we obtain
\begin{align}
\begin{split}\label{eq:relation6}
0 \overset{\mathrm{s,p}}{\equiv} & (X_1-X_r) \mathfrak{Z}(X_2-X_1,\dots,X_r-X_1)
+ X_r \mathfrak{Z}(X_2,\dots,X_r)
\\
&\quad\quad - X_1 \mathfrak{Z}(X_2-X_1,\dots,X_{r-1}-X_1,-X_1).
\end{split}
\end{align}
Then, in \cref{eq:relation6}, replacing $X_i$ with $X_i-X_1$ for $2\leq i\leq r$ and replacing $X_1$ with $-X_1$, we obtain
\begin{equation}\label{eq:relation7}
0 \overset{\mathrm{s,p}}{\equiv} -X_r \mathfrak{Z}(X_2,\dots,X_r)
- (X_1-X_r) \mathfrak{Z}(X_2-X_1,\dots,X_r-X_1)
+ X_1 \mathfrak{Z}(X_2,\dots,X_{r-1},X_1).
\end{equation}
Moreover, by adding \cref{eq:relation6} and \cref{eq:relation7}, we obtain
\begin{equation}\label{eq:relation7.5}
\mathfrak{Z}(X_2-X_1,\dots,X_{r-1}-X_1,-X_1) \overset{\mathrm{s,p}}{\equiv} \mathfrak{Z}(X_2,\dots,X_{r-1},X_1).
\end{equation}
Therefore, applying \cref{eq:relation4} to the left-hand side of \cref{eq:relation7.5}, we obtain
\begin{equation}\label{eq:relation8}
\mathfrak{Z}(X_1,X_2,\dots,X_{r-1}) \overset{\mathrm{s,p}}{\equiv} \mathfrak{Z}(X_2,\dots,X_{r-1},X_1).
\end{equation}
This congruence shows that $\mathfrak{Z}(X_1,\dots,X_{r-1})$ is invariant under cyclic permutations. Combining this with
\begin{equation*}
0 \overset{\mathrm{s}}{\equiv} \frac{\partial}{\partial X_1}\frac{\partial}{\partial X_r}\mathfrak{A}_2(X_1,\dots,X_r)
\overset{\mathrm{p}}{\equiv} \frac{\partial}{\partial X_1}\frac{\partial}{\partial X_r}
(X_1-X_r) \mathfrak{Z}(X_2-X_1,\dots,X_r-X_1),
\end{equation*}
we obtain
\begin{equation*}
0 \overset{\mathrm{s,p}}{\equiv} \frac{\partial}{\partial X_1}\frac{\partial}{\partial X_j}
(X_1-X_j) \mathfrak{Z}(X_2-X_1,\dots,X_r-X_1)
= \sum_{i=2}^{r}\frac{\partial}{\partial X_i}\frac{\partial}{\partial X_j}
(X_j-X_1) \mathfrak{Z}(X_2-X_1,\dots,X_r-X_1)
\end{equation*}
for any $j$ with $2\leq j\leq r$. Combining this with \cref{lem:yasuda-lem} below, we obtain
\begin{equation}\label{eq:relation9}
0 \overset{\mathrm{s,p}}{\equiv} \frac{\partial}{\partial X_j} \mathfrak{Z}(X_2-X_1,\dots,X_r-X_1)
\end{equation}
for any $j$ with $2\leq j\leq r$. Upon setting $X_1=0$, this gives the desired equation.
\end{proof}

\begin{lem}[cf. Lemma 6.6 in \cite{Yas16}]\label{lem:yasuda-lem}
Let $f(X_1,\dots,X_r)\in \R[X_1,\dots,X_r]$ be a homogeneous polynomial of degree $k$. For $j=1,\dots,r$, set
$$F_j\ceq \sum_{i=1}^{r}\frac{\partial}{\partial X_i}\frac{\partial}{\partial X_j}(X_j f)
\quad\text{and}\quad 
F=\sum_{j=1}^{r}F_j.$$
Then we have
\begin{equation}\label{eq:coeff-explicit}
\frac{\partial}{\partial X_j}f
= F_j-\frac{1}{k+r}\frac{\partial}{\partial X_j}(X_jF).
\end{equation}
\end{lem}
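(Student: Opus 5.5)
The identity \cref{eq:coeff-explicit} should follow from commutation relations among three differential operators acting on $\R[X_1,\dots,X_r]$, with no computation on individual monomials. I will use
$$D \ceq \sum_{i=1}^{r}\frac{\partial}{\partial X_i}, \qquad E \ceq \sum_{i=1}^{r}X_i\frac{\partial}{\partial X_i},$$
together with the operators of multiplication by $X_j$. Since all the operators $\partial/\partial X_i$ commute with one another, the definition of $F_j$ can be rewritten as
$$F_j = D\,\frac{\partial}{\partial X_j}(X_j f).$$

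The first step is to compute $F$. Summing over $j$ and pulling $D$ outside gives $F = D\sum_{j}\frac{\partial}{\partial X_j}(X_j f)$. The Leibniz rule gives $\sum_j \frac{\partial}{\partial X_j}(X_j f) = r f + E f$. Since $f$ is homogeneous of degree $k$, Euler's identity gives $Ef = kf$. Therefore
$$F = (k+r)\,Df.$$
Because $k\ge 0$ and $r\ge 1$, we have $k+r\neq 0$, so the division in the lemma is legitimate and
$$\frac{1}{k+r}\frac{\partial}{\partial X_j}(X_jF) = \frac{\partial}{\partial X_j}(X_j\,Df).$$

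The second step is to compute $F_j$ by moving $D$ past $\partial/\partial X_j$ and past multiplication by $X_j$. The derivative $D$ commutes with $\partial/\partial X_j$, and its commutator with multiplication by $X_j$ is the identity: $D(X_j g) = X_j\,Dg + g$. Applying this with $g=f$ gives
$$F_j = \frac{\partial}{\partial X_j}\bigl(D(X_j f)\bigr) = \frac{\partial}{\partial X_j}(X_j\,Df) + \frac{\partial}{\partial X_j}f.$$
Subtracting the expression obtained in the first step yields exactly \cref{eq:coeff-explicit}.

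I do not expect a genuine obstacle here. The only points needing care are the order in which $D$, $\partial/\partial X_j$ and multiplication by $X_j$ are commuted, and the observation that Euler's identity is applied to $f$ itself, of degree $k$, and not to $Df$, of degree $k-1$. That is why the factor is $k+r$ rather than $k+r-1$. If the case $f=0$ or $k=0$ causes concern, note that both sides of \cref{eq:coeff-explicit} are linear in $f$ and the argument holds verbatim for every homogeneous degree $k\ge 0$.
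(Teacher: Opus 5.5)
Your proof is correct and follows essentially the same route as the paper: Euler's identity gives $F=(k+r)\sum_{i}\frac{\partial}{\partial X_i}f$, and the Leibniz rule splits $F_j$ into $\frac{\partial}{\partial X_j}f$ plus a term equal to $\frac{1}{k+r}\frac{\partial}{\partial X_j}(X_jF)$. The only cosmetic difference is that you commute $D$ past $\frac{\partial}{\partial X_j}$ before using $D(X_j g)=X_jDg+g$, whereas the paper expands $F_j$ fully and then regroups $F+X_j\frac{\partial}{\partial X_j}F$ as $\frac{\partial}{\partial X_j}(X_jF)$.
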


\begin{proof}
We have
$$(k+r)\sum_{i=1}^{r}\frac{\partial}{\partial X_i} f
= \sum_{i=1}^{r}\frac{\partial}{\partial X_i}\left(\sum_{j=1}^{r}\frac{\partial}{\partial X_j}(X_j f)\right) = \sum_{j=1}^{r}F_j =F.$$
Therefore, we obtain
\begin{align*}
F_j &= \sum_{i=1}^{r}\frac{\partial}{\partial X_i}\frac{\partial}{\partial X_j}(X_j f)
= X_j\frac{\partial}{\partial X_j}\sum_{i=1}^{r}\frac{\partial}{\partial X_i}f
+ \frac{\partial}{\partial X_j}f
+ \sum_{i=1}^{r}\frac{\partial}{\partial X_i}f
\\
&= \frac{\partial}{\partial X_j}f
+ \frac{1}{k+r}\left(F+X_j\frac{\partial}{\partial X_j}F\right).
\end{align*}
As a consequence, we have
\begin{align*}
\frac{\partial}{\partial X_j}f
= F_j-\frac{1}{k+r}\frac{\partial}{\partial X_j}(X_jF).
\end{align*}
\end{proof}

\begin{remark}
The main difference between the proof of \cref{thm:main-thm} in this paper and Yasuda's proof lies in the type of relations required. In Yasuda's proof, only the extended double shuffle relations (EDSR) are required. However, in the proof given in this paper, we need not only EDSR but also duality relations. In fact, duality relations are used to prove \cref{thm:Hirose}. Since it is conjectured that EDSR provides all relations among multiple zeta values, duality relations are expected to follow from EDSR. However, this inclusion has not yet been proven. Consequently, when considering formal symmetric multiple zeta values (see \cite{BR26}), Yasuda's result has an advantage until such a proof is established.
\end{remark}

\section{An algorithm to obtain an expression of MZVs in terms of SMZVs}\label{sec:algorithm}

By carrying out the argument in the proof of \cref{thm:to-prove} without taking modulo and comparing coefficients of the resulting formal power series, we can explicitly express MZVs in terms of $\Sha$-SMZVs and products of $\Sha$-regularized MZVs (evaluated at $T=0$). To illustrate this concretely, we compute below the cases of MZVs of depths one and two.

Furthermore, by examining the algorithm, we obtain the following properties of this expression. For an MZV of weight $w$ and depth $r$, we have:
\begin{enumerate}
\item The sum of the depths of the two ($\Sha$-regularized) MZVs appearing in each product term is $r$.
\item The depths of the $\Sha$-SMZVs appearing in the expression are either $r$ or $w-r$, where the latter is the depth of the Hoffman dual of an index of weight $w$ and depth $r+1$.
\item The coefficients appearing in the expression lie in $\displaystyle{\frac{1}{w}\Z}$.
\end{enumerate}
Indeed, it follows directly from the proof of \cref{thm:to-prove} that properties (1) and (2) hold. Note also that $\zeta^{\sha}(k_1,\dots,k_r;0)$ can be expressed as a $\Q$-linear combination of MZVs of depth $r$. Moreover, property (3) follows from the fact that the factor $\frac{1}{k+r}$ appearing in \cref{eq:coeff-explicit} is the only factor that can contribute to the denominators of the coefficients.

\begin{example} We have
\begin{align*}
\zeta(2,3) &= \frac{1}{5} \zeta(2) \zeta(3) + \frac{6}{5} \zeta^{\sha}_S(2, 3) + \frac{4}{5} \zeta^{\sha}_S(3, 2) - \frac{4}{5} \zeta^{\sha}_S(1, 2, 2) - \frac{2}{5} \zeta^{\sha}_S(2, 1, 2) - \zeta^{\sha}_S(2, 2, 1).
\\
\zeta(1,2,2) &= \frac{1}{5} \zeta(2) \zeta(1, 2) - \frac{2}{5} \zeta^{\sha}_S(1, 4) - \frac{1}{5} \zeta^{\sha}_S(3, 2)
\\
&\quad\quad+ \frac{8}{5} \zeta^{\sha}_S(1, 1, 3) + \frac{4}{5} \zeta^{\sha}_S(1, 2, 2) + \zeta^{\sha}_S(2, 1, 2) + \frac{1}{5} \zeta^{\sha}_S(2, 2, 1) + \frac{2}{5} \zeta^{\sha}_S(3, 1, 1).
\end{align*}
\end{example}

Moreover, this procedure yields a representation of MZVs as linear combinations of SMZVs in $\mathcal{Z}/\zeta(2)\mathcal{Z}$. We summarize below the algorithm for obtaining such a representation.

\begin{enumerate}
\item For a given $r>0$, carry out the computation leading to \cref{eq:relation9} without taking modulo.
\item Expand both sides of \cref{eq:relation9} as formal power series without taking modulo and compare their coefficients. This yields an explicit expression for MZVs as linear combinations of $\Sha$-SMZVs and products of $\Sha$-regularized MZVs.
\item Rewrite the $\Sha$-regularized MZVs as linear combinations of MZVs. We then obtain an explicit expression for MZVs as linear combinations of $\Sha$-SMZVs and products of MZVs.
\item Rewrite each MZV appearing as a factor in these products as a linear combination of $\Sha$-SMZVs and products of $\Sha$-regularized MZVs.
\item Repeating steps (3) and (4), we obtain an explicit expression for MZVs as linear combinations of products of $\Sha$-SMZVs.
\item Since $\Sha$-SMZVs satisfy the harmonic product relation in $\mathcal{Z}/\zeta(2)\mathcal{Z}$, the expression obtained in step (5) yields an explicit representation of MZVs as linear combinations of SMZVs in $\mathcal{Z}/\zeta(2)\mathcal{Z}$.
\end{enumerate}

Although the resulting computation is quite involved and it is difficult to write the result as a general explicit formula, the algorithm can be implemented on a computer.

\subsection{The depth 1 case} 
Without taking the modulo in \cref{eq:relation5}, and then partially differentiating with respect to $X_1$ and $X_r$, we obtain
$$\frac{\partial}{\partial X_1}\frac{\partial}{\partial X_2}\mathfrak{A}_2(X_1,X_2)
= \frac{\partial}{\partial X_1}\frac{\partial}{\partial X_2}(X_1-X_2) \mathfrak{Z}(X_2-X_1)
= \frac{\partial}{\partial X_2}\frac{\partial}{\partial X_2}(X_2-X_1) \mathfrak{Z}(X_2-X_1).$$
Expanding the above generating function, we have
\begin{align*}
\sum_{k_1,k_2\geq 1}(-1)^{k_1+k_2}k_1k_2
&\zeta^{\sha}_S((k_1,k_2)^{\vee})X_1^{k_1-1}X_2^{k_2-1}
= \sum_{k\geq 2} k(k-1)\zeta(k)(X_2-X_1)^{k-2}
\\
&= \sum_{r\geq 1}\sum_{s\geq 1} (r+s)(r+s-1)\binom{r+s-2}{r-1}\zeta(r+s)(-1)^{r-1}X_1^{r-1}X_2^{s-1}.
\end{align*}
By comparing the coefficients, we have
\begin{equation}\label{eq:OSS}
\zeta^{\sha}_{S}(\{1\}^{k_1-1},2,\{1\}^{k_2-1}) = \zeta^{\sha}_S((k_1,k_2)^{\vee}) 
= (-1)^{k_2-1}\binom{k_1+k_2}{k_1}\zeta(k_1+k_2).
\end{equation}
This formula was already given by Ono--Sakurada--Seki (Theorem 3.8 in \cite{OSS21}). Then, the following are well-known results.

\begin{lem}[{see \cite[Theorem 5.3, Proposition 5.5, Theorem 5.8]{MO21};
see also \cite[Corollaire 1.12]{Jar14}}] \label{lem:basic-prop}
We have the following.
\begin{equation}\label{lem:repeated-index}
\zeta_S(\{k\}^{n})=\zeta_S^{\star}(\{k\}^{n})=0,
\end{equation}
\begin{equation}\label{lem:anti-index}
\zeta_S(k_1,k_2,\dots,k_r)
= (-1)^{k_1+k_2+\cdots+k_r}\zeta_S(k_r,\dots,k_2,k_1),
\end{equation}
\begin{equation}\label{lem:Hoffman-dual}
\zeta_S^{\star}(\mathds{k}) = -\zeta_S^{\star}(\mathds{k}^{\vee})
\end{equation}
\begin{equation}\label{lem:stuffle-antipode}
\sum_{i=0}^{r}(-1)^i\zeta_S(k_1,\dots,k_i)\zeta^{\star}_S(k_r,\dots,k_{i+1}) = 0.
\end{equation}
Here, for $\mathds{k}=(k_1,\dots,k_r)\in \mathcal{I}$, the star symmetric multiple zeta value $\zeta_S^{\star}(\mathds{k})$ is defined by
$$\zeta_S^{\star}(\mathds{k}) \ceq \sum_{\Box\text{ is `,' or }`+\text{'}} \zeta_S(k_1\Box\dots\Box k_r).$$
\end{lem}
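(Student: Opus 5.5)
The plan is to prove the four identities in the order \eqref{lem:anti-index}, \eqref{lem:stuffle-antipode}, \eqref{lem:repeated-index}, \eqref{lem:Hoffman-dual}. The first three come from formal manipulations with generating series in the stuffle Hopf algebra. The last one needs genuine input from duality. Throughout I work with the $*$-version \cref{eq:def-stu-sym}, which is legitimate by \cref{thm:KZ0}. Where possible I prove the identities exactly, before reducing modulo $\zeta(2)$.

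\emph{Reversal \eqref{lem:anti-index}.} Substitute $i\mapsto r-i$ in \cref{eq:def-stu-sym} for the reversed index $(k_r,\dots,k_1)$. The $i$-th term becomes
$$(-1)^{k_{r-i}+\cdots+k_1}\,\zeta^*(k_r,\dots,k_{r-i+1};T)\,\zeta^*(k_1,\dots,k_{r-i};T).$$
Multiplying by $(-1)^{\wt(\mathds{k})}$ turns the sign into $(-1)^{k_{r-i+1}+\cdots+k_r}$, which reproduces \cref{eq:def-stu-sym} term by term. So this identity holds exactly.

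\emph{Antipode \eqref{lem:stuffle-antipode}.} Let $\Phi=\sum_w\zeta^*(w;T)\,w$ be the generating series over words in letters $z_k$. Let $\tau$ be the anti-automorphism $z_k\mapsto(-1)^kz_k$, and let $\sigma$ be the anti-automorphism $z_k\mapsto -z_k$. Then the series of $\zeta_S^*$ is $G=\Phi\cdot\tau(\Phi)$. The star series $\Phi^\star$ is obtained from $\Phi$ by summing over all coarsenings of indices, so that $G^\star=\Phi^\star\cdot\tau(\Phi^\star)$. Because $\zeta^*$ is a stuffle homomorphism, the stuffle antipode gives
$$\Phi\cdot\sigma(\Phi^\star)=1.$$
Since $\tau$ and $\sigma$ commute, applying $\tau$ also gives $\tau(\Phi)\cdot\sigma\tau(\Phi^\star)=1$. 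Then
$$G\cdot\sigma(G^\star)=\Phi\,\tau(\Phi)\,\sigma\tau(\Phi^\star)\,\sigma(\Phi^\star)=1,$$
and the coefficient of a nonempty word in this identity is exactly \eqref{lem:stuffle-antipode}.

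\emph{Repeated indices \eqref{lem:repeated-index}.} Set $E(t)=\sum_n\zeta^*(\{k\}^n;T)t^n$. Newton's identities give $E(t)=\exp\bigl(\sum_{m}(-1)^{m-1}p_mt^m/m\bigr)$, where $p_m=\zeta(km)$ for $km\ge2$ and $p_1=T$ when $k=1$. The generating series of $\zeta_S^*(\{k\}^n)$ is then $E(t)E((-1)^kt)$.
\begin{itemize}
\item If $k$ is even, every $p_m$ is a rational multiple of $\pi^{km}$, hence lies in $\zeta(2)\mathcal{Z}$. So $E\equiv1$.
\item If $k$ is odd, the exponents of $E(t)$ and $E(-t)$ cancel for odd $m$, and this cancellation includes the $T$-term. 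Only $p_m$ with $m$ even survive, and these are again even zeta values.
\end{itemize}
In both cases $E(t)E((-1)^kt)\equiv1$ modulo $\zeta(2)$, so $\zeta_S(\{k\}^n)=0$ for $n\ge1$. The star version follows either by the same computation with $E^\star(t)=1/E(-t)$, or inductively from \eqref{lem:stuffle-antipode} applied to $\{k\}^n$.

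\emph{Hoffman duality \eqref{lem:Hoffman-dual}.} I expect this step to be the main obstacle, because it is not formal in the stuffle algebra. The plan is to pass to the shuffle side, using $\zeta_S^{\sha}=\zeta_S^*$ in $\mathcal{Z}/\zeta(2)\mathcal{Z}$.
\begin{enumerate}
\item Write $\zeta^{\sha}_S$ as a regularized iterated integral over words in $x,y$ (Kaneko--Zagier's iterated-integral form).
\item Observe that passing from $\zeta_S$ to $\zeta_S^\star$ replaces each internal $y$ by $x+y$ at the word level. Under this substitution Hoffman's dual becomes a word automorphism.
\item Combine this automorphism with the duality involution $x\leftrightarrow y$ plus reversal of the regularized MZVs in each factor of \cref{eq:def-shu-sym}. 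The aim is to show that $\zeta_S^\star(\mathds{k})+\zeta_S^\star(\mathds{k}^\vee)$ becomes a sum of regularized terms that cancel in pairs, up to $\zeta(2)$-multiples coming from regularization.
\end{enumerate}
If this direct route becomes unwieldy, I would fall back on \cref{thm:Hirose}, which already encodes duality through $\zeta_2$.
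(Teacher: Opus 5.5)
\textbf{Assessment.} Your arguments for \eqref{lem:anti-index}, \eqref{lem:stuffle-antipode} and \eqref{lem:repeated-index} are correct. The paper itself proves none of the four items; it imports them from \cite{MO21} and \cite{Jar14}. Two small points on the antipode step. First, applying the anti-automorphism $\tau$ to $\Phi\,\sigma(\Phi^\star)=1$ literally gives $\sigma\tau(\Phi^\star)\,\tau(\Phi)=1$, so you need that one-sided inverses of series with constant term $1$ are two-sided. Second, \eqref{lem:stuffle-antipode} also follows at once from \cref{thm:KZ}(1), because the antipode identity holds for any stuffle character.

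\textbf{The gap.} The gap is \eqref{lem:Hoffman-dual}. There you give a plan rather than an argument, and step (3) cannot work as described. Write $\mathds{k}\leftrightarrow yw$ with $\zeta(k_1,\dots,k_r)\leftrightarrow yx^{k_1-1}\cdots yx^{k_r-1}$. Then $\mathds{k}^\vee\leftrightarrow y\bar w$, where $\bar w$ swaps $x$ and $y$.
\begin{enumerate}
\item Every product occurring in $\zeta^\star_S(\mathds{k})$ (via \eqref{eq:def-shu-sym} applied to coarsenings) is cut in front of a letter $y$ of $w$.
\item Every product in $\zeta^\star_S(\mathds{k}^\vee)$ is cut in front of a letter $x$ of $w$.
\end{enumerate}
So, apart from the two ends, the cut points are disjoint. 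Applying $\zeta^{\sha}(u)=\zeta^{\sha}(\overleftarrow{\bar u})$ factor by factor therefore never reproduces the terms of the other side. Instead it creates words beginning with $x$ (from factors ending in $k_i=1$), whose regularization yields genuine MZVs, not $\zeta(2)$-multiples. There is no pairwise cancellation to be found. The fallback on \cref{thm:Hirose} does not close this either: it only rewrites $\zeta_S$ of dual indices through $\zeta_2$. Turning that into $\zeta^\star_S(\mathds{k})+\zeta^\star_S(\mathds{k}^\vee)\equiv 0$ would be a separate computation you have not supplied.

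\textbf{What is missing.} You need an identity for the whole generating series. One way to supply it is as follows.
\begin{enumerate}
\item Put $\Phi=\sum_w\zeta^{\sha}(w)\,w$, with $x\leftrightarrow dt/t$, $y\leftrightarrow dt/(1-t)$ and $\zeta^{\sha}(x)=\zeta^{\sha}(y)=0$. By \eqref{eq:def-shu-sym} and the shuffle antipode, $\zeta^{\sha}_S(k_1,\dots,k_r)$ is exactly the coefficient of $yx^{k_1-1}\cdots yx^{k_r-1}y$ in $P(x,y)=\Phi\,y\,\Phi^{-1}$.
\item A transpose computation shows that $\zeta^\star_S(\mathds{k})$ and $\zeta^\star_S(\mathds{k}^\vee)$ are the coefficients of the same element $y\,\psi(w)\,y$ in $P(x,y)$ and in $P(x,x-y)$, respectively. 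Here $\psi\colon x\mapsto x$, $y\mapsto x+y$.
\item The substitution $y\mapsto x-y$ comes from $t\mapsto t/(t-1)$, which fixes $0$ and exchanges $1$ and $\infty$. Work modulo $\pi i$, which kills the half-monodromies coming from tangential base points. Then $-P(x,y)$ and $-P(x,x-y)$ are the residues at $1$ and at $\infty$ transported to $0$.
\item The first-order part in $2\pi i$ of $\gamma_0\gamma_1\gamma_\infty=1$ gives $P(x,y)+P(x,x-y)\equiv x$ modulo $\pi i$. On real coefficients this means modulo $\zeta(2)$.
\end{enumerate}
Since $x$ has no coefficient on words of the form $y\cdots y$, this is exactly \eqref{lem:Hoffman-dual}. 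Without some such global input, this item has to be cited, as the paper does.
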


Using the above lemma, we have
\begin{align*}
\zeta_{S}(\{1\}^{k_1-1},2,\{1\}^{k_2-1})
&\overset{\cref{lem:repeated-index} \text{ and } \cref{lem:stuffle-antipode}}{=} (-1)^{k_1+k_2}\zeta_{S}^{\star}(\{1\}^{k_2-1},2,\{1\}^{k_1-1})
\\
&\overset{\cref{lem:Hoffman-dual}}{=} (-1)^{k_1+k_2+1}\zeta_{S}^{\star}(k_2,k_1)
\overset{\cref{lem:repeated-index} \text{ and } \cref{lem:anti-index}}{=} (-1)^{k_1+k_2}\zeta_{S}(k_1,k_2).
\end{align*}
Recall that $Z_S(\mathds{k})$ is the image of $\zeta^{\sha}(\mathds{k})$ in $\mathcal{Z}/\zeta(2)\mathcal{Z}$. Therefore, for any integers $k_1,k_2\geq 1$, we obtain
\begin{equation}\label{eq:depth1StoZ}
\zeta_{S}(k_1,k_2) = (-1)^{k_1-1}\binom{k_1+k_2}{k_1}Z_S(k_1+k_2)
= (-1)^{k_2}\binom{k_1+k_2}{k_1}Z_S(k_1+k_2)
\end{equation}
from \cref{eq:OSS}, where the second equality follows from the fact that $Z_S(2k)=0$ for $k\geq 1$. This equation was already given in Example 9.4 of \cite{Kan19}. From \cref{eq:depth1StoZ}, we have $\zeta_S(r,s)=0$ if $r+s$ is even.
Therefore, $\zeta_S^{\star}(k_1,k_2,k_3)=\zeta_S(k_1,k_2,k_3)$ if $k_1+k_2+k_3$ is even.

\subsection{The depth 2 case} Equations \cref{eq:relation1,eq:relation2,eq:relation3} without taking the modulo are expressed as
\begin{align*}
\mathfrak{Z}^S(X_1-X_3,X_2-X_3) &= \mathfrak{Z}(X_1-X_3,X_2-X_3)
- \mathfrak{Z}(X_1-X_3)\mathfrak{Z}(X_3-X_2) + \mathfrak{Z}(X_3-X_2,X_3-X_1),
\\
0 = \mathfrak{A}_1(X_1,X_2,X_3) &= \mathfrak{Z}(X_2-X_1,X_3-X_1)
- \mathfrak{Z}(X_3-X_2)\mathfrak{Z}(X_2-X_1) + \mathfrak{Z}(X_3-X_2,X_3-X_1),
\\
\mathfrak{A}_2(X_1,X_2,X_3) &= X_1\mathfrak{Z}(X_2-X_1,X_3-X_1) \notag
\\
&\quad - X_2\mathfrak{Z}(X_3-X_2)\mathfrak{Z}(X_2-X_1) + X_3\mathfrak{Z}(X_3-X_2,X_3-X_1).
\end{align*}
Hence, \cref{eq:relation4,eq:relation5} without taking the modulo are expressed as
\begin{align}
\mathfrak{Z}(X_1-X_3,X_2-X_3) 
&= \mathfrak{Z}(X_2-X_1,X_3-X_1)
 + \mathfrak{Z}^S(X_1-X_3,X_2-X_3)\notag
 \\
& + \mathfrak{Z}(X_1-X_3)\mathfrak{Z}(X_3-X_2)
 - \mathfrak{Z}(X_3-X_2)\mathfrak{Z}(X_2-X_1) \label{eq:relation3.4}
\\
\mathfrak{A}_2(X_1,X_2,X_3) &= (X_1-X_3)\mathfrak{Z}(X_2-X_1,X_3-X_1)
- (X_2-X_3)\mathfrak{Z}(X_3-X_2)\mathfrak{Z}(X_2-X_1) \label{eq:relation3.5}
\end{align}
thus, \cref{eq:relation6} without taking the modulo is expressed as
\begin{align}\label{eq:ZV-depth2}
\begin{split}
\mathfrak{Z}^{\vee}(X_1,X_2,X_3)
&= (X_1-X_3)\mathfrak{Z}(X_2-X_1,X_3-X_1)
- (X_2-X_3)\mathfrak{Z}(X_3-X_2)\mathfrak{Z}(X_2-X_1)
\\
&+ X_3\mathfrak{Z}(X_2,X_3)
+(X_2-X_3)\mathfrak{Z}(X_3-X_2)\mathfrak{Z}(X_2)
\\
&- X_1\mathfrak{Z}(X_2-X_1,-X_1)
+ X_2\mathfrak{Z}(-X_2)\mathfrak{Z}(X_2-X_1)
- X_2\mathfrak{Z}(-X_2)\mathfrak{Z}(X_2).
\end{split}
\end{align}
Applying the substitutions $X_1\mapsto -X_1$, $X_2\mapsto X_2-X_1$, and $X_3\mapsto X_3-X_1$, we obtain the equation corresponding to \cref{eq:relation7}.
\begin{align}\label{eq:ZV-depth2-2}
\begin{split}
\mathfrak{Z}^{\vee}(-X_1,&X_2-X_1,X_3-X_1)
\\
&= (-X_3)\mathfrak{Z}(X_2,X_3)
- (X_2-X_3)\mathfrak{Z}(X_3-X_2)\mathfrak{Z}(X_2)
\\
&+ (X_3-X_1)\mathfrak{Z}(X_2-X_1,X_3-X_1)
+(X_2-X_3)\mathfrak{Z}(X_3-X_2)\mathfrak{Z}(X_2-X_1)
\\
&+ X_1\mathfrak{Z}(X_2,X_1)
+ (X_2-X_1)\mathfrak{Z}(X_1-X_2)\mathfrak{Z}(X_2)
\\
&- (X_2-X_1)\mathfrak{Z}(X_1-X_2)\mathfrak{Z}(X_2-X_1).
\end{split}
\end{align}
Adding the two above equations \cref{eq:ZV-depth2,eq:ZV-depth2-2}, we have
\begin{align*}
\mathfrak{Z}^{\vee}(-X_1,X_2-X_1,X_3-X_1) + \mathfrak{Z}^{\vee}(X_1,X_2,X_3)
&=  X_1\mathfrak{Z}(X_2,X_1)
+ (X_2-X_1)\mathfrak{Z}(X_1-X_2)\mathfrak{Z}(X_2)
\\
&- (X_2-X_1)\mathfrak{Z}(X_1-X_2)\mathfrak{Z}(X_2-X_1)
- X_2\mathfrak{Z}(-X_2)\mathfrak{Z}(X_2)
\\
&+ X_2\mathfrak{Z}(-X_2)\mathfrak{Z}(X_2-X_1)
- X_1\mathfrak{Z}(X_2-X_1,-X_1).
\end{align*}
This equation corresponds to \cref{eq:relation7.5}. Note that the right-hand side does not depend on $X_3$, and thus we set $X_3=X_1$ for simplicity. Then, using \cref{eq:relation3.4} with $X_3=0$, we can rewrite the above equation as follows:
\begin{align*}
-X_1\mathfrak{Z}(X_2,X_1)
&=  -X_1\mathfrak{Z}(X_1,X_2)
- \mathfrak{Z}^{\vee}(X_1,X_2,X_1)
- (X_2-X_1)\mathfrak{Z}(X_1-X_2)\mathfrak{Z}(X_2-X_1)
\\
&+ (X_2-X_1)\mathfrak{Z}(X_1-X_2)\mathfrak{Z}(X_2)
+ (X_2-X_1)\mathfrak{Z}(-X_2)\mathfrak{Z}(X_2-X_1)
\\
&+ X_1\mathfrak{Z}(X_1)\mathfrak{Z}(-X_2)
- X_2\mathfrak{Z}(-X_2)\mathfrak{Z}(X_2)
+ X_1\mathfrak{Z}^S(X_1,X_2).
\end{align*}
Therefore, replacing \(X_2\) by \(X_2-X_1\) and \(X_1\) by \(X_3-X_1\) in the above equation, and using \cref{eq:relation3.5}, we obtain
\begin{align*}
\mathfrak{A}_2(X_1,X_2,X_3)
&= (X_1-X_3)\mathfrak{Z}(X_3-X_1,X_2-X_1)
- \mathfrak{Z}^{\vee}(X_3-X_1,X_2-X_1,X_3-X_1)
\\
&- (X_2-X_3)\mathfrak{Z}(X_3-X_2)\mathfrak{Z}(X_2-X_3)
+ (X_2-X_3)\mathfrak{Z}(X_1-X_2)\mathfrak{Z}(X_2-X_3)
\\
&+ (X_3-X_1)\mathfrak{Z}(X_3-X_1)\mathfrak{Z}(X_1-X_2)
- (X_2-X_1)\mathfrak{Z}(X_1-X_2)\mathfrak{Z}(X_2-X_1)
\\
&+ (X_3-X_1)\mathfrak{Z}^S(X_3-X_1,X_2-X_1).
\end{align*}
As a consequence, we obtain the following two expressions.
\begin{align*}
(X_1-X_3)\mathfrak{Z}(X_2-X_1,X_3-X_1)
&= \mathfrak{A}_2(X_1,X_2,X_3)
+(X_2-X_3)\mathfrak{Z}(X_3-X_2)\mathfrak{Z}(X_2-X_1)
\intertext{and}
(X_1-X_2)\mathfrak{Z}(X_2-X_1,X_3-X_1)
&= \mathfrak{A}_2(X_1,X_3,X_2)
+ \mathfrak{Z}^{\vee}(X_2-X_1,X_3-X_1,X_2-X_1)
\\
&- (X_2-X_1)\mathfrak{Z}^S(X_2-X_1,X_3-X_1)
\\
&- (X_3-X_2)\mathfrak{Z}(X_1-X_3)\mathfrak{Z}(X_3-X_2)
- (X_2-X_1)\mathfrak{Z}(X_2-X_1)\mathfrak{Z}(X_1-X_3)
\\
&+ (X_3-X_1)\mathfrak{Z}(X_1-X_3)\mathfrak{Z}(X_3-X_1)
+ (X_3-X_2)\mathfrak{Z}(X_2-X_3)\mathfrak{Z}(X_3-X_2).
\end{align*}
Therefore, we have
\begin{align*}
F_3 &\ceq \left(\frac{\partial}{\partial X_2}+\frac{\partial}{\partial X_3}\right)
\frac{\partial}{\partial X_3}(X_3-X_1)\mathfrak{Z}(X_2-X_1,X_3-X_1)
\\
&= \frac{\partial}{\partial X_1}\frac{\partial}{\partial X_3}(X_1-X_3)\mathfrak{Z}(X_2-X_1,X_3-X_1)
\\
&= \frac{\partial}{\partial X_1}\frac{\partial}{\partial X_3}
(\mathfrak{A}_2(X_1,X_2,X_3)
-(X_3-X_2)\mathfrak{Z}(X_3-X_2)\mathfrak{Z}(X_2-X_1))
\intertext{and}
F_2 &\ceq \left(\frac{\partial}{\partial X_2}+\frac{\partial}{\partial X_3}\right)
\frac{\partial}{\partial X_2}(X_2-X_1)\mathfrak{Z}(X_2-X_1,X_3-X_1)
\\
&= \frac{\partial}{\partial X_1}\frac{\partial}{\partial X_2}(X_1-X_2)\mathfrak{Z}(X_2-X_1,X_3-X_1)
\\
&= \frac{\partial}{\partial X_1}\frac{\partial}{\partial X_2}
(\mathfrak{A}_2(X_1,X_3,X_2)
+ \mathfrak{Z}^{\vee}(X_2-X_1,X_3-X_1,X_2-X_1)
- (X_2-X_1)\mathfrak{Z}^S(X_2-X_1,X_3-X_1)
\\
&- (X_3-X_2)\mathfrak{Z}(X_3-X_2)\mathfrak{Z}(X_1-X_3)
- (X_2-X_1)\mathfrak{Z}(X_2-X_1)\mathfrak{Z}(X_1-X_3)).
\end{align*}
From \cref{lem:yasuda-lem}, we have
$$\frac{\partial}{\partial X_3}\mathfrak{Z}(X_2,X_3) = F_3|_{X_1=0} -\frac{1}{\wt} \frac{\partial}{\partial X_3}(X_3(F_2 + F_3)|_{X_1=0}),$$
where $\wt$ denotes the weight of multiple zeta value. Expanding these generating functions, we obtain
\begin{align*}
\zeta(l_2,l_3+1)
&= \frac{l_2+1}{l_2+l_3+1}\Biggl(
(-1)^{l_2+l_3+1}\zeta_S^{\sha}((1,l_2,l_3)^{\vee})
+\sum_{\substack{k_1,k_3\geq 1\\k_1+k_3=1+l_2+l_3}}
(-1)^{k_3+l_3}\zeta^{\sha}(k_1)\zeta^{\sha}(k_3)(k_1-1)\binom{k_3}{l_3}\Biggr)
\\
&- \frac{l_2}{l_2+l_3+1}\Biggl(
(-1)^{1+l_2+l_3}\zeta_S^{\sha}((1,l_3,l_2)^{\vee})
- \sum_{\substack{k_1,k_2\geq 1\\k_1+k_2=1+l_2+l_3}}
(-1)^{k_1+l_2}\zeta^{\sha}(k_1)\zeta^{\sha}(k_2)(k_1-1)\binom{k_2}{l_2}
\\
&- \sum_{\substack{k_1,k_2,k_3\geq 1\\k_1+k_2+k_3=1+l_2+l_3}}
(-1)^{k_1+k_2+k_3}\zeta_S^{\sha}((k_1,k_2,k_3)^{\vee})
\binom{k_1+k_3}{l_2}\binom{k_2-1}{l_3-1}
\\
&+ \sum_{\substack{k_2,k_3\geq 1\\k_2+k_3=1+l_2+l_3}}
((-1)^{k_2+l_2+l_3}\zeta^{\sha}(k_2)\zeta^{\sha}(k_3)+\zeta_S^{\sha}(k_2,k_3))
\binom{k_2}{l_2}\binom{k_3-1}{l_3-1}
\Biggr).
\end{align*}
The above algorithm can be carried out for arbitrary depth. In the case of depth 2, a slightly simpler explicit formula can be obtained as an element of $\mathcal{Z}/\zeta(2)\mathcal{Z}$ in the next section.

\section{The depth 2 case revisited}\label{sec:depth2-rev}


\begin{lem}\label{lem:ZS-dual-depth3} For positive integers $k_1$, $k_2$, and $k_3$, we have
\begin{align*}
\zeta_S((k_1,k_2,k_3)^{\vee})
&= \zeta_S^{\star}(k_1,k_2,k_3)
- (-1)^{k_1+k_3}\sum_{i=1}^{k_2-1}
\binom{k_1+i}{k_1}\binom{k_2+k_3-i}{k_3}Z_S(k_1+i)Z_S(k_2+k_3-i).
\end{align*}
In particular, we have $\zeta_S((k_1,1,k_3)^{\vee}) = \zeta_S^{\star}(k_1,1,k_3)$.
\end{lem}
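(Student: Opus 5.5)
We would prove \cref{lem:ZS-dual-depth3} by combining the stuffle-antipode relation \cref{lem:stuffle-antipode} with Hoffman duality \cref{lem:Hoffman-dual}, and then evaluating the low-depth pieces with \cref{eq:depth1StoZ}. First we write the dual index explicitly. For $k_2\geq 2$,
$$(k_1,k_2,k_3)^{\vee}=(\{1\}^{k_1-1},2,\{1\}^{k_2-2},2,\{1\}^{k_3-1}),$$
and for $k_2=1$ the two $2$'s merge into a single $3$, giving $(\{1\}^{k_1-1},3,\{1\}^{k_3-1})$. Call this index $\mathbf{m}=(m_1,\dots,m_n)$, of weight $w=k_1+k_2+k_3$.

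Next we solve \cref{lem:stuffle-antipode} for the $i=n$ term. This gives
$$(-1)^n\zeta_S(\mathbf{m})=-\sum_{i=0}^{n-1}(-1)^i\zeta_S(m_1,\dots,m_i)\,\zeta_S^{\star}(m_n,\dots,m_{i+1}).$$

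\emph{The term $i=0$.} It is $\zeta_S^{\star}(\overleftarrow{\mathbf{m}})$. We have $\overleftarrow{\mathbf{m}}=(k_3,k_2,k_1)^{\vee}$, so \cref{lem:Hoffman-dual} turns it into $-\zeta_S^{\star}(k_3,k_2,k_1)$. The star version of \cref{lem:anti-index} (obtained by summing \cref{lem:anti-index} over the $\Box$'s) then turns this into $-(-1)^{w}\zeta_S^{\star}(k_1,k_2,k_3)$. A parity check using $n=w-3+1$ should give exactly the main term $\zeta_S^{\star}(k_1,k_2,k_3)$.

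\emph{The terms $i\geq 1$.} These are products, and we classify them by where the cut between position $i$ and $i+1$ falls.
\begin{enumerate}
\item If the cut lies inside the first block of $1$'s or the last block of $1$'s, one factor is $\zeta_S(\{1\}^a)$ or $\zeta_S^{\star}(\{1\}^b)$ with $a,b\geq 1$. This factor vanishes by \cref{lem:repeated-index}.
\item If the cut lies between the two $2$'s, say after $(\{1\}^{k_1-1},2,\{1\}^{i-1})$ with $1\leq i\leq k_2-1$, both factors are of the shape $(\{1\}^{a},2,\{1\}^{b})$. Each is a dual of a depth-two index, e.g.\ $(k_1,i)^{\vee}$ and the reversed tail $(k_3,k_2-i)^{\vee}$.
\end{enumerate}
For the non-star factor we use the identity already derived after \cref{lem:basic-prop}, namely $\zeta_S(\{1\}^{a-1},2,\{1\}^{b-1})=(-1)^{a+b}\zeta_S(a,b)$. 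For the star factor we use \cref{lem:Hoffman-dual} together with $\zeta_S^{\star}(p,q)=\zeta_S(p,q)+\zeta_S(p+q)=\zeta_S(p,q)$, which holds because $\zeta_S(\{k\}^1)=0$. Then \cref{eq:depth1StoZ} converts each depth-two value into $\pm\binom{p+q}{p}Z_S(p+q)$. This should produce the product
$$\binom{k_1+i}{k_1}\binom{k_2+k_3-i}{k_3}Z_S(k_1+i)Z_S(k_2+k_3-i).$$
When $k_2=1$ there are no cuts of the second kind, which gives the ``in particular'' statement.

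The main obstacle is the sign bookkeeping. We must track $(-1)^i$, $(-1)^n$, the signs from \cref{lem:Hoffman-dual} and \cref{lem:anti-index}, and the two equivalent sign conventions in \cref{eq:depth1StoZ}. The latter can be switched using $Z_S(2k)=0$, and we will use this freedom to reach the uniform prefactor $-(-1)^{k_1+k_3}$.

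A second point to check is the boundary cuts, immediately before or after a $2$. There one factor is $\zeta_S(\{1\}^{k_1-1},2)$ or $\zeta_S^{\star}(2,\{1\}^{k_3-1})$, paired with a depth-$\geq 2$ tail that still contains a $2$. We expect these to cancel or vanish: the relevant depth-one duals $(k_1,0)$ are degenerate, and \cref{lem:repeated-index} applies after reduction. If they do not vanish directly, we will absorb them by re-indexing the sum to include $i=0$ and $i=k_2$, and then check that those endpoint terms cancel against each other.
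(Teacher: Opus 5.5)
Your proposal is the paper's proof. You isolate the top term of the stuffle-antipode relation for $(k_1,k_2,k_3)^{\vee}$, kill the cuts inside the outer blocks of $1$'s by \cref{lem:repeated-index}, turn the $j=0$ term into $\zeta_S^{\star}(k_1,k_2,k_3)$ via Hoffman duality and reversal, and evaluate the $k_2-1$ middle cuts with \cref{eq:OSS} and \cref{eq:depth1StoZ}. The signs do work out: $n=w-2$, and each product term's sign reduces to $-(-1)^{k_1+k_3}$ once you use that $Z_S(k_1+i)Z_S(k_2+k_3-i)$ vanishes unless both arguments are odd.

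Your ``boundary cut'' worry is misplaced, and its expected outcome (that these terms vanish or cancel) is wrong. The cuts right after the first $2$ and right before the second $2$ are exactly the $i=1$ and $i=k_2-1$ terms of your case (2). Note that $(\{1\}^{k_1-1},2)=(k_1,1)^{\vee}$, not a degenerate $(k_1,0)^{\vee}$, and the reversed star factor is $\zeta_S^{\star}(\{1\}^{k_3-1},2)$. These terms are generally nonzero and must be kept. The cuts right before the first $2$ or right after the second $2$ already fall under case (1), or coincide with the $j=0$ and $j=n$ terms. So your two cases exhaust all cuts, and no re-indexing is needed.
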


\begin{proof}
Assume first that $k_2\geq 2$. Using \cref{lem:basic-prop} and \cref{eq:OSS}, we obtain
\begin{align*}
\zeta_S((k_1,k_2,k_3)^{\vee})
&= \zeta_S(\{1\}^{k_1-1},2,\{1\}^{k_2-2},2,\{1\}^{k_3-1})
\\
&\overset{\cref{lem:repeated-index} \text{ and } \cref{lem:stuffle-antipode}}{=}
(-1)^{k_1+k_2+k_3-1}\zeta^{\star}_S(\{1\}^{k_3-1},2,\{1\}^{k_2-2},2,\{1\}^{k_1-1})
\\
&\quad\quad + (-1)^{k_1+k_2+k_3-1}\sum_{i=1}^{k_2-1}
(-1)^{k_1+i-1}\zeta_S(\{1\}^{k_1-1},2,\{1\}^{i-1})
\zeta^{\star}_S(\{1\}^{k_3-1},2,\{1\}^{k_2-1-i})
\\
&\overset{\cref{eq:OSS},\, \cref{lem:anti-index}\text{ and }\cref{lem:Hoffman-dual}}{=} \zeta_S^{\star}(k_1,k_2,k_3)
- (-1)^{k_1+k_3}\sum_{i=1}^{k_2-1}
\binom{k_1+i}{k_1}\binom{k_2+k_3-i}{k_3}Z_S(k_1+i)Z_S(k_2+k_3-i).
\end{align*}
The case $k_2=1$ follows by a similar argument. This completes the proof.
\end{proof}

\begin{lem}\label{lem:depth3-0-formula}
For any integer $k>0$, we have
\begin{align*}
\sum_{\substack{r+s=k+1\\r,s\geq 1}}\zeta_S(r,s,1) = (1+(-1)^{k-1})\zeta_S(1,1,k).
\end{align*}
Furthermore, for any integer $k>0$, we have
\begin{align*}
\sum_{\substack{r+s=k+1\\r,s\geq 1}}\zeta_S(r,1,s) = 0.
\end{align*}
\end{lem}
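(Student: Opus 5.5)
The plan is to play two kinds of relations among depth-three SMZVs against each other: the stuffle product with a depth-one factor, and the shuffle relation of \cref{thm:KZ}(2). Reversal \cref{lem:anti-index} and the depth-two formula \cref{eq:depth1StoZ} are used to simplify both. All computations take place in $\mathcal{Z}/\zeta(2)\mathcal{Z}$, where $\zeta_S(n)=0$ for every $n\geq 1$. Indeed, $\zeta_S(n)$ equals $(1+(-1)^n)\zeta(n)$ up to regularization, and this vanishes for odd $n$ and is a multiple of $\zeta(2)$ for even $n$. Hence every product $\zeta_S(n)\zeta_S(\mathds{k})$ vanishes modulo $\zeta(2)$.

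Write $A_k=\sum_{r+s=k+1}\zeta_S(r,s,1)$, $B_k=\sum_{r+s=k+1}\zeta_S(r,1,s)$ and $C_k=\sum_{r+s=k+1}\zeta_S(1,r,s)$.

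\textbf{Step 1: stuffle.} Expand $0=\zeta_S(1)\zeta_S(r,s)$ by the stuffle product, \cref{thm:KZ}(1):
\[
0=\zeta_S(1,r,s)+\zeta_S(r,1,s)+\zeta_S(r,s,1)+\zeta_S(r+1,s)+\zeta_S(r,s+1).
\]
Summing over $r+s=k+1$ gives $C_k+B_k+A_k+D_k=0$, where $D_k$ is the depth-two sum. By \cref{lem:anti-index}, $\zeta_S(1,r,s)=(-1)^{k+2}\zeta_S(s,r,1)$, so $C_k=(-1)^kA_k$. By \cref{eq:depth1StoZ}, $D_k$ is $Z_S(k+2)$ times an alternating sum of binomial coefficients $\binom{k+2}{r+1}$ and $\binom{k+2}{r}$ with signs $(-1)^s$. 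For even $k$ this vanishes because $Z_S(k+2)=0$. For odd $k$ I expect the sum to telescope to $0$, and I will check this directly. The result is
\[
(1+(-1)^k)A_k+B_k=0.
\]

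\textbf{Step 2: shuffle.} To separate $A_k$ from $B_k$, apply \cref{thm:KZ}(2) to $\mathds{k}=(1,1)$, $\mathds{k}'=(k)$ and to the swapped pair $\mathds{k}=(k)$, $\mathds{k}'=(1,1)$. In word form the shuffle of $e_1e_1$ with $e_0^{k-1}e_1$ is an explicit combination of depth-three indices whose entries sum to $k+2$. Among them are the families $(r,s,1)$ and $(r,1,s)$ with $r+s=k+1$, and the boundary terms $(1,1,k)$, $(1,k,1)$, $(k,1,1)$, each with computable multiplicities. The right-hand sides are $(-1)^k\zeta_S(1,1,k)$ and $\zeta_S(k,1,1)=(-1)^k\zeta_S(1,1,k)$ by \cref{lem:anti-index}. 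Reversal sends the family $(1,r,s)$ back to the family $(s,r,1)$. Taking a suitable combination of the two shuffle identities, possibly together with the stuffle relation for $\zeta_S(1)\zeta_S(1,k)$ to handle $\zeta_S(1,k,1)$, I expect to obtain a second linear relation between $A_k$, $B_k$ and $\zeta_S(1,1,k)$. Solving it together with Step 1 should give $A_k=(1+(-1)^{k-1})\zeta_S(1,1,k)$ and $B_k=0$.

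The main obstacle is the bookkeeping in Step 2. The shuffle $e_1e_1 \,\Sha\, e_0^{k-1}e_1$ produces terms in which the letters $e_1$ land at the boundary, and these must be matched carefully against the sums $A_k$ and $B_k$. I will first verify the coefficients for small $k$, say $k=2,3$, and then fix the general pattern. If the two shuffle identities turn out not to be independent enough to isolate $B_k$, the fallback is to take the shuffle of $(1)$ with $(r,s)$ and sum over $r+s=k+1$. This yields a relation between $(-1)^{k+1}\sum_{r+s=k+1}\zeta_S(1,s,r)$ and the sums $A_k$, $B_k$, $C_k$, which supplies the missing equation.
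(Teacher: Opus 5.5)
\textbf{Step 1 is fine, but Step 2 has a genuine gap for even $k$.} Step 1 is correct: the depth-two sum $D_k$ does vanish for odd $k$ as well. It is essentially how the paper passes from the first formula to the second.

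\textbf{Why Step 2 fails.} First, $(1,1)\,\Sha\,(k)$ and $(k)\,\Sha\,(1,1)$ are the same element, because the shuffle product is commutative. So part (2) of \cref{thm:KZ} gives you one identity here, not two.

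Second, that identity is not a relation among $A_k$, $B_k$ and boundary terms. Inserting two letters $y$ into $yx^{k-1}$ can cut the block $x^{k-1}$ in two places, so every triple of weight $k+2$ occurs. In the encoding $k\mapsto yx^{k-1}$, for which \cref{thm:KZ}(2) is stated, the identity reads
\[
\zeta_S(1,1,k)+C_k+E_k=(-1)^k\zeta_S(1,1,k),\qquad E_k:=\sum_{a+b+c=k+2}\zeta_S(a,b,c).
\]
If you expand in your encoding $e_0^{k-1}e_1$, the expansion comes out reversed, namely $E_k+A_k+(k,1,1)$. The matching right-hand side is then $\zeta_S(1,1,k)$, not $(-1)^k\zeta_S(1,1,k)$.

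Your fallback has the same defect. Summing $(1)\,\Sha\,(r,s)$ over $r+s=k+1$ gives $(1+(-1)^k)C_k+2E_k=0$. This again involves the full sum $E_k$, not a relation among $A_k,B_k,C_k$.

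\textbf{Consequence by parity.} For odd $k$ this is harmless: $E_k=0$ by \cref{lem:anti-index}, and the displayed identity gives $A_k=2\zeta_S(1,1,k)$. For even $k$, however, all your relations collapse to
\[
E_k=-A_k,\qquad B_k=-2A_k,\qquad \zeta_S(1,k,1)=-2\zeta_S(1,1,k).
\]
These leave $A_k$ undetermined. So neither formula is reached for even $k$.

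\textbf{The missing idea.} Shuffle the single letter $(1)$ with the depth-two index $(k,1)$, without summing. Inserting one $y$ into $yx^{k-1}y$ cuts the block at most once, so
\[
\zeta_S(1,k,1)+A_k+\zeta_S(k,1,1)=(-1)^{k+1}\zeta_S(1,1,k).
\]
The boundary terms are removed by the stuffle $0=\zeta_S(1,1)\zeta_S(k)$. This gives $\zeta_S(k,1,1)+\zeta_S(1,k,1)+\zeta_S(1,1,k)=0$, because the depth-two terms $\zeta_S(k+1,1)+\zeta_S(1,k+1)=(1+(-1)^k)(k+2)Z_S(k+2)$ vanish by \cref{eq:depth1StoZ}. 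Hence $A_k=(1+(-1)^{k-1})\zeta_S(1,1,k)$ for every $k$, and your Step 1 then yields $B_k=0$.

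This is exactly the paper's argument. There it appears as the specialization $X=Z=0$ of the generating-function form of \cref{thm:KZ}(2), combined with $\zeta_S(1,1)\zeta_S(k)=0$.
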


\begin{proof} The depth-three case of the equation in (2) of \cref{thm:KZ} can be written as
\begin{align*}
\mathfrak{Z}^S(X,-Z,-Y) &= \mathfrak{Z}^S(X,X+Y,X+Z) + \mathfrak{Z}^S(Y,X+Y,X+Z) + \mathfrak{Z}^S(Y,Z,X+Z).
\end{align*}
Substituting $X=0$ and $Z=0$, we obtain
\begin{align*}
\mathfrak{Z}^S(0,0,-Y) &= \mathfrak{Z}^S(0,Y,0) + \mathfrak{Z}^S(Y,Y,0) + \mathfrak{Z}^S(Y,0,0).
\end{align*}
Furthermore, from (1) of \cref{thm:KZ}, we have 
$$0=\mathfrak{Z}^S(0,0)\mathfrak{Z}^S(Y)=\mathfrak{Z}^S(Y,0,0)+\mathfrak{Z}^S(0,Y,0)+\mathfrak{Z}^S(0,0,Y)\mod\zeta(2).$$
Thus, we obtain
\begin{align*}
\mathfrak{Z}^S(Y,Y,0) = \mathfrak{Z}^S(0,0,Y) + \mathfrak{Z}^S(0,0,-Y) \mod \zeta(2).
\end{align*}
Expanding these generating functions on both sides and comparing the coefficients, we obtain the desired first formula. If $k$ is odd, the second formula follows immediately from \cref{lem:anti-index}. If $k$ is even, then the right-hand side of the first formula vanishes.
Now, summing each term in the identity
$$ 0 = \zeta_S(1)\zeta_S(r,s) = \zeta_S(1,r,s)+\zeta_S(r,1,s)+\zeta_S(r,s,1) $$
over all $r,s\geq1$ with $r+s=k+1$, the sums of the first and third terms vanish by the first formula and \cref{lem:anti-index}. Hence, the sum of the second term also vanishes, which proves the second formula. This completes the proof.
\end{proof}

By substituting $(X_1,X_2,X_3)$ with $(-X_1,0,X_3-X_1)$ in \cref{eq:ZV-depth2}, and then partially differentiating with respect to $X_3$, we obtain
\begin{align*}
\frac{\partial}{\partial X_3}\mathfrak{Z}^{\vee}(-X_1,0,X_3-X_1)
&= \frac{\partial}{\partial X_3}(-X_3\mathfrak{Z}(X_1,X_3)
+ (X_3-X_1)\mathfrak{Z}(X_3-X_1)\mathfrak{Z}(X_1)
+ (X_3-X_1)\mathfrak{Z}(0,X_3-X_1)).
\end{align*}
On the other hand, substituting $(X,Y)$ with $(0,X_3-X_1)$ into the identity $\mathfrak{Z}(X)\mathfrak{Z}(Y) = \mathfrak{Z}(X,X+Y) + \mathfrak{Z}(Y,X+Y)$, we have
$$0 = \mathfrak{Z}(0,X_3-X_1) + \mathfrak{Z}(X_3-X_1,X_3-X_1).$$
The above two equations imply
\begin{align*}
\frac{\partial}{\partial X_3}X_3\mathfrak{Z}(X_1,X_3)
&= \frac{\partial}{\partial X_3}(-\mathfrak{Z}^{\vee}(-X_1,0,X_3-X_1)
\\
&\quad + (X_3-X_1)\mathfrak{Z}(X_3-X_1)\mathfrak{Z}(X_1)
- (X_3-X_1)\mathfrak{Z}(X_3-X_1,X_3-X_1)).
\end{align*}
Expanding this equation, we obtain
\begin{align*}
\zeta^{\sha}(k_1,k_2)
&= \sum_{\substack{l_1+l_2=k_1+k_2\\l_1\geq 2,\, l_2\geq 1}}
\binom{l_2}{k_2}((-1)^{l_1+l_2+k_1}\zeta_S^{\sha}((l_1-1,1,l_2)^{\vee}) + (-1)^{l_2+k_2}\zeta^{\sha}(l_1)\zeta^{\sha}(l_2))
\\
&\quad + (-1)^{k_1}\binom{k_1+k_2-1}{k_2}\sum_{\substack{l_1+l_2=k_1+k_2\\l_1,l_2\geq 1}}\zeta^{\sha}(l_1,l_2).
\end{align*}
Here, for even $k>1$, by \cref{eq:depth1StoZ} and the definition of $\Sha$-symmetric multiple zeta value, we have
$$0 = \sum_{n=1}^{k-1}(-1)^{n-1}\binom{k}{n}Z_S(k)
= \sum_{n=1}^{k-1}\zeta_S(n,k-n)
= 2\sum_{n=1}^{k-1}Z_S(n,k-n)
+\sum_{n=1}^{k-1}(-1)^{n}Z_S(n)Z_S(k-n).$$
Therefore, if $k_1+k_2$ is even, we have
\begin{align*}
Z_S(k_1,k_2)
&= (-1)^{k_1}\sum_{\substack{l_1+l_2=k_1+k_2\\l_1\geq 2,\, l_2\geq 1}}
\left(\binom{l_2}{k_2}(\zeta_S((l_1-1,1,l_2)^{\vee}) - Z_S(l_1)Z_S(l_2))
+ \frac{1}{2}\binom{k_1+k_2-1}{k_2}Z_S(l_1)Z_S(l_2)\right).
\end{align*}
From \cref{lem:ZS-dual-depth3,lem:depth3-0-formula}, together with the fact that $\zeta_S^{\star}(k_1,k_2,k_3)=\zeta_S(k_1,k_2,k_3)$ when $k_1+k_2+k_3$ is even, we obtain the following expression.

\begin{thm}\label{thm:DZ-SMZV}
For any integers $k_1,k_2\geq 1$ such that $k_1+k_2$ is even, we have
\begin{align*}
Z_S(k_1,k_2)
&= (-1)^{k_1}\sum_{\substack{l_1+l_2=k_1+k_2\\l_1\geq 2,\, l_2\geq 1}}
\left(\binom{l_2}{k_2}- \frac{1}{2} \binom{k_1+k_2-1}{k_2}\right)
(\zeta_S(l_1-1,1,l_2)-Z_S(l_1)Z_S(l_2)).
\end{align*}
\end{thm}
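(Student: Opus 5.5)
Starting point: the expression for $Z_S(k_1,k_2)$ obtained just before the statement. With $w=k_1+k_2$ even, it reads
\begin{align*}
Z_S(k_1,k_2)=(-1)^{k_1}\sum_{\substack{l_1+l_2=w\\l_1\geq 2,\,l_2\geq 1}}\left(\binom{l_2}{k_2}\bigl(\zeta_S((l_1-1,1,l_2)^{\vee})-Z_S(l_1)Z_S(l_2)\bigr)+\frac12\binom{w-1}{k_2}Z_S(l_1)Z_S(l_2)\right).
\end{align*}
The plan is to rewrite every dual symmetric value here as a depth-three SMZV, and then to show that the leftover constant term is exactly the $-\tfrac12\binom{w-1}{k_2}$ part of the claimed coefficient.

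\textbf{Step 1: remove the duals.} By the "in particular" part of \cref{lem:ZS-dual-depth3},
$$\zeta_S((l_1-1,1,l_2)^{\vee})=\zeta_S^{\star}(l_1-1,1,l_2).$$
The weight is $l_1+l_2=w$, which is even, so by the remark after \cref{eq:depth1StoZ} we have $\zeta_S^{\star}=\zeta_S$ in depth three. Hence $\zeta_S((l_1-1,1,l_2)^{\vee})=\zeta_S(l_1-1,1,l_2)$. After this substitution, the part of the sum carrying $\binom{l_2}{k_2}$ already agrees with the target formula.

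\textbf{Step 2: reduce to one identity.} Comparing with the statement, it suffices to show
$$\frac12\binom{w-1}{k_2}\sum_{l_1+l_2=w,\,l_1\geq2}Z_S(l_1)Z_S(l_2)=-\frac12\binom{w-1}{k_2}\sum_{l_1+l_2=w,\,l_1\geq 2}\bigl(\zeta_S(l_1-1,1,l_2)-Z_S(l_1)Z_S(l_2)\bigr).$$
This is equivalent to
$$\sum_{l_1\geq 2,\ l_1+l_2=w}\zeta_S(l_1-1,1,l_2)=0.$$
Set $r=l_1-1\geq1$ and $s=l_2\geq 1$, so that $r+s=w-1$. The sum becomes $\sum_{r+s=w-1}\zeta_S(r,1,s)$. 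This is exactly the vanishing statement in the second part of \cref{lem:depth3-0-formula} with $k=w-2$, provided $w\geq 4$ so that $k>0$.

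\textbf{Step 3: small weight.} When $w=2$ the sums are empty or trivial: $l_1=2,l_2=0$ is excluded. Both sides then reduce to checking that $Z_S(1,1)=0$, which follows from $Z_S(1)=0$ and the shuffle relation $Z_S(1)^2=2Z_S(1,1)$. The $l_1=2$ terms involve $Z_S(2)=0$, so they cause no trouble.

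The step needing the most care is the bookkeeping: the index shift in Step 2 must match the lemma's range $r+s=k+1$ with $r,s\geq1$. The remaining work is routine.
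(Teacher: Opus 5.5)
Your proposal is correct and follows the paper's own argument. Starting from the pre-statement formula, you replace $\zeta_S((l_1-1,1,l_2)^{\vee})$ by $\zeta_S^{\star}(l_1-1,1,l_2)=\zeta_S(l_1-1,1,l_2)$ via \cref{lem:ZS-dual-depth3} and the even-weight remark, then absorb the $\frac{1}{2}\binom{k_1+k_2-1}{k_2}$ term using the vanishing $\sum_{r+s=w-1}\zeta_S(r,1,s)=0$ from \cref{lem:depth3-0-formula}; your separate treatment of $w=2$ is harmless but unnecessary, since the relevant sum is then empty.
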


\section{The space of triple SMZVs and FMZVs}\label{sec:depth3}

In this section, we consider the $\Q$-vector space spanned by triple SMZVs and FMZVs
$$\mathcal{TZ}^{\mathcal{F}}_k \ceq \Span_{\Q}\{\zeta_{\mathcal{F}}(k_1,k_2,k_3)\mid k_1+k_2+k_3=k\}\quad (\mathcal{F}=S\text{ or }\mathcal{A}).$$
If \(k\) is an odd integer, this structure is already known.
\begin{thm}[{\cite{Hof15,Zha08} for $\mathcal{F}=\mathcal{A}$, \cite{OSS21} for $\mathcal{F}=\mathcal{S}$}]
For any odd integer $k>1$ and $\mathcal{F}\in\{S,\mathcal{A}\}$, we have $\mathcal{TZ}^{\mathcal{F}}_k = \Q\cdot Z_{\mathcal{F}}(k)$.
\end{thm}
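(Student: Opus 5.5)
We need to show that for odd $k>1$, every $\zeta_{\mathcal{F}}(k_1,k_2,k_3)$ with $k_1+k_2+k_3=k$ is a rational multiple of $Z_{\mathcal{F}}(k)$. The inclusion $\Q\cdot Z_{\mathcal{F}}(k)\subset\mathcal{TZ}^{\mathcal{F}}_k$ is easy. By \cref{lem:repeated-index} (or the harmonic product below) some triple value, for example $\zeta_{\mathcal{F}}(1,1,k-2)$, is a nonzero multiple of $Z_{\mathcal{F}}(k)$. The work is the reverse inclusion.

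For $\mathcal{F}=S$ I will work in $\mathcal{Z}/\zeta(2)\mathcal{Z}$ with the following tools.

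\textbf{Harmonic product.} By \cref{thm:KZ}(1), $\zeta_S(a)\zeta_S(b,c)$ and $\zeta_S(a,b)\zeta_S(c)$ expand as sums of triple values plus double values. Since $\zeta_S(a)=Z_S(a)$ up to sign and $\zeta_S(b,c)$ is given by \cref{eq:depth1StoZ}, the left-hand sides are products $Z_S(a)Z_S(b+c)$.

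\textbf{Reversal.} \cref{lem:anti-index} gives $\zeta_S(k_1,k_2,k_3)=-\zeta_S(k_3,k_2,k_1)$ for odd $k$.

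\textbf{Shuffle-type relation.} \cref{thm:KZ}(2) in depth three is exactly the generating-function identity used in the proof of \cref{lem:depth3-0-formula}.

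\textbf{Duality.} \cref{lem:ZS-dual-depth3} together with \cref{eq:OSS} expresses $\zeta_S$ of the Hoffman dual of a depth-three index through $\zeta_S^\star$. For odd weight, $\zeta_S^\star(k_1,k_2,k_3)$ equals $\zeta_S(k_1,k_2,k_3)$ plus double values $\zeta_S(k_1+k_2,k_3)$ and $\zeta_S(k_1,k_2+k_3)$, which are multiples of $Z_S(k)$, plus $\zeta_S(k)$.

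The products $Z_S(a)Z_S(b)$ with $a+b=k$ odd have one argument even, so they vanish because $Z_S(2m)=0$. Hence modulo $\Q Z_S(k)$ the harmonic products give linear relations among triple values. I would then run a dimension-counting, or rather elimination, argument on the generating function $\mathfrak{Z}^S(X,Y,Z)$ restricted to its odd total-degree part. The three families of relations to impose are:

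\begin{enumerate}
\item the stuffle relations $\mathfrak{Z}^S(X)\mathfrak{Z}^S(Y,Z)\equiv\cdots$;
\item the antisymmetry $\mathfrak{Z}^S(X,Y,Z)=\mathfrak{Z}^S(-Z,-Y,-X)$ in odd degree;
\item the reversal/shuffle identity from \cref{thm:KZ}(2).
\end{enumerate}

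The goal is to show that these force $\mathfrak{Z}^S(X,Y,Z)$ to be congruent, modulo $\Q Z_S(k)$-terms, to an element of the form $c(X,Y,Z)\cdot(\text{depth-one generating function})$. Concretely, I would mimic Hoffman's argument for $\mathcal{A}$: first show $\zeta_S(a,b,c)\equiv -\zeta_S(c,b,a)$, then use the stuffle relation with $\zeta_S(a)\cdot\zeta_S(b,c)$ to express permuted sums, and finally combine with the shuffle-type relation to solve for each individual $\zeta_S(a,b,c)$.

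The main obstacle is this final linear-algebra step. One must show that the combined system of stuffle and shuffle/reversal relations in weight $k$ has full rank on the space of triples modulo $Z_S(k)$, i.e.\ that no free parameter survives. I would attempt it by writing everything as functional equations in $X,Y,Z$ and substituting specializations, for instance $Z=0$ and $X=-Y$, to isolate $\mathfrak{Z}^S(X,Y,Z)$. This parallels the derivation of \cref{lem:depth3-0-formula}.

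For $\mathcal{F}=\mathcal{A}$ I would invoke the identical relation set, which is known for $\zeta_{\mathcal{A}}$: stuffle, reversal, and the $\mathcal{A}$-analogue of \cref{thm:KZ}(2). Alternatively, I would simply cite the explicit Hoffman/Zhao formulas expressing $\zeta_{\mathcal{A}}(k_1,k_2,k_3)$ as binomial multiples of $Z_{\mathcal{A}}(k)=B_{p-k}/k$. Because only these shared relations are used, the same elimination proves both cases simultaneously, with the explicit coefficients matching the $S$ case.
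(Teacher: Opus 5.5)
You have a genuine gap: the step that carries the whole theorem is never carried out. That step is showing the relations force every $\zeta_{\mathcal F}(a,b,c)$ of odd weight $k$ into $\Q Z_{\mathcal F}(k)$. You set up an elimination and then call the rank question ``the main obstacle,'' so for $\mathcal F=S$ nothing is proved. For $\mathcal A$ you fall back on citing Hoffman and Zhao; the paper gives no proof either, citing Hoffman, Zhao and Ono--Sakurada--Seki.

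Several of your ingredients are also off:
\begin{itemize}
\item The depth-one values are not $\pm Z_{\mathcal F}(a)$; they vanish, since $\zeta_S(a)=(1+(-1)^a)\zeta(a)\equiv 0$ modulo $\zeta(2)$ and $\zeta_{\mathcal A}(a)=0$. Your conclusion that the stuffle products give homogeneous relations still holds, but for this reason.
\item The duality and shuffle-type relations are not needed. Duality by itself only rewrites depth-$(k-2)$ values in terms of triples.
\item The ``easy'' inclusion is not justified. \cref{lem:repeated-index} gives vanishing, not a nonzero multiple. At $k=3$ the only triple value is $\zeta_{\mathcal F}(1,1,1)=0$, so there you only get $\mathcal{TZ}^{\mathcal F}_3=0\subseteq\Q Z_{\mathcal F}(3)$.
\end{itemize}

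The missing step needs no linear algebra. Add the stuffle expansions of $\zeta_{\mathcal F}(a)\zeta_{\mathcal F}(b,c)=0$ and $\zeta_{\mathcal F}(a,b)\zeta_{\mathcal F}(c)=0$. In odd weight, reversal \cref{lem:anti-index} gives
\[
\zeta_{\mathcal F}(c,a,b)=-\zeta_{\mathcal F}(b,a,c),\qquad \zeta_{\mathcal F}(a,c,b)=-\zeta_{\mathcal F}(b,c,a),\qquad \zeta_{\mathcal F}(a+c,b)=-\zeta_{\mathcal F}(b,a+c).
\]
So all cross terms cancel, leaving
\[
2\zeta_{\mathcal F}(a,b,c)=-\zeta_{\mathcal F}(a+b,c)-\zeta_{\mathcal F}(a,b+c).
\]
Then \cref{eq:depth1StoZ} gives
\[
\zeta_{\mathcal F}(a,b,c)=\frac{1}{2}\Bigl((-1)^a\binom{k}{a}-(-1)^c\binom{k}{c}\Bigr)Z_{\mathcal F}(k),
\]
which is the Hoffman--Zhao formula. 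It proves $\mathcal{TZ}^{\mathcal F}_k\subseteq\Q Z_{\mathcal F}(k)$ uniformly for $S$ and $\mathcal A$, since stuffle, reversal, vanishing of depth one and the depth-two evaluation all hold in both settings. Taking $(a,b,c)=(1,1,k-2)$ gives the coefficient $k(k-3)/4$. This is nonzero exactly when $k\geq 5$, which gives equality there.
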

We investigate the structure of $\mathcal{TZ}^{\mathcal{F}}_k$ for an even integer $k$. In view of \Cref{conj-KZ}, we expect that all relations among SMZVs also hold among FMZVs. Indeed, properties \textup{(1)} and \textup{(2)} in \cref{thm:KZ} are known to hold also in $\mathcal{Z}_{\mathcal{A}}$:
\begin{thm}[Proposition 4 and Theorem 2 in \cite{KZ}]\label{thm:KZ-A}
We have the following.
\begin{enumerate}
\item $\zeta_{\mathcal{A}}(\mathds{k})$ satisfies the stuffle product.
\item For any $\mathds{k},\mathds{k}'\in\mathcal{I}$, we have $\zeta_{\mathcal{A}}(\mathds{k}\,\Sha\,\mathds{k}') = (-1)^{\wt(\mathds{k}')}\zeta_{\mathcal{A}}(\mathds{k},\overleftarrow{\mathds{k}'})$.
\end{enumerate}
\end{thm}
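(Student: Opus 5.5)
Part (1) is elementary and I would prove it at the level of truncated sums. For a fixed $N$, the product $\zeta_{<N}(\mathds{k})\zeta_{<N}(\mathds{k}')$ is a product of two finite sums over chains in $\{1,\dots,N-1\}$. Splitting according to the relative position of the two chains gives exactly the harmonic (stuffle) product expansion $\zeta_{<N}(\mathds{k}*\mathds{k}')$. This is an identity in $\Q$, with no limiting process involved. Taking $N=p$ and reducing modulo $p$ for every prime $p$ larger than the weight makes every denominator invertible. The finitely many small primes are killed in $\mathcal{A}$, so the identity holds componentwise in $\mathcal{A}$.

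For part (2) I would first prove the reversal formula
$$\zeta_{\mathcal{A}}(k_1,\dots,k_r)=(-1)^{k_1+\cdots+k_r}\zeta_{\mathcal{A}}(k_r,\dots,k_1).$$
The substitution $n_i\mapsto p-n_i$ reverses the order of the chain, and $(p-n)^{-k}\equiv(-1)^kn^{-k}\pmod p$. Using this, (2) is equivalent to
$$\zeta_{\mathcal{A}}(\mathds{k}\,\Sha\,\mathds{k}')=\zeta_{\mathcal{A}}(\mathds{k}',\overleftarrow{\mathds{k}})\cdot(\pm1),$$
so it suffices to prove it for a convenient normalisation. By induction on the depth of $\mathds{k}'$, peeling off one letter at a time via the recursive definition of the shuffle, the problem reduces to the following. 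One needs a "shuffle-type" relation for multiple harmonic sums modulo $p$ that replaces the missing integral representation.

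To get it, I would use finite polylogarithms
$$\mathrm{Li}^{<p}_{\mathds{k}}(t)=\sum_{0<n_1<\cdots<n_r<p}\frac{t^{n_r}}{n_1^{k_1}\cdots n_r^{k_r}}.$$
These are truncations of the genuine multiple polylogarithms, which are iterated integrals of $dt/t$ and $dt/(1-t)$ and hence satisfy the shuffle product exactly. Truncating at degree $p$ shows that the shuffle relation holds for the truncated functions modulo $t^p$ and modulo the error coming from terms whose indices exceed $p$. I would then exploit the identities $(1-t)^p\equiv 1-t^p \pmod p$ and $\binom{p}{n}\equiv 0\pmod p$ for $0<n<p$. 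These let one rewrite the leftover terms: pairing a word $\mathds{k}$ against $\mathds{k}'$ produces the value at $t=1$ of products of truncated polylogarithms with the second factor in the variable $1-t$. The map $t\mapsto 1-t$ on iterated integrals reverses the word and swaps the two forms. This should produce the reversed index $\overleftarrow{\mathds{k}'}$ together with the sign $(-1)^{\wt(\mathds{k}')}$.

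The main obstacle is this last step: controlling the boundary terms of degree $\ge p$ and showing that they all vanish in $\mathcal{A}$. My first attempt is the single-letter case $\mathds{k}'=(l)$, done directly by partial fractions. I would write $\sum_{0<m<p}m^{-l}\prod_j n_j^{-k_j}$ and use $\frac{1}{m^a(n-m)^b}$-type expansions, or equivalently the identity $\sum_{0<m<p}m^{-l}\equiv 0$ together with $\sum_{0<m<n}$ versus $\sum_{n<m<p}$ splittings. I expect the single-letter case to yield $\zeta_{\mathcal{A}}(\mathds{k}\,\Sha\,(l))=(-1)^l\zeta_{\mathcal{A}}(\mathds{k},l)$, after which induction on depth, using associativity of $\Sha$ and the reversal formula, finishes the proof.
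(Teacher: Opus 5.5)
The paper gives no proof of this statement; it quotes it from Kaneko--Zagier. I therefore compare your proposal with the standard argument.

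Part (1) is fine, and so is the reversal formula $\zeta_{\mathcal A}(\mathds k)=(-1)^{\wt(\mathds k)}\zeta_{\mathcal A}(\overleftarrow{\mathds k})$. Part (2), however, has a genuine gap. You say yourself that the decisive step, controlling the ``boundary terms of degree $\ge p$'', is unresolved, and your fallback does not close it. The depth-one case $\zeta_{\mathcal A}(\mathds m\,\Sha\,(l))=(-1)^l\zeta_{\mathcal A}(\mathds m,l)$, combined with commutativity and reversal, gives (2) only when $\mathds k$ or $\mathds k'$ has depth one. For $\mathds k'$ of depth $\ge 2$, associativity turns $\zeta_{\mathcal A}((\mathds k\,\Sha\,\mathds a)\,\Sha\,\mathds b)$ into $(-1)^{\wt(\mathds b)}\zeta_{\mathcal A}(\mathds k\,\Sha\,\mathds a,\overleftarrow{\mathds b})$. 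That is a shuffle followed by a concatenation, and you have no formula for it. You also cannot reduce $\mathds k'$ to shuffles of depth-one pieces. In weight $w$ the products $(a)\,\Sha\,(b)$ span at most $\lfloor w/2\rfloor$ dimensions, while there are $w-1$ depth-two indices. So your ``induction on depth'' has no valid inductive step.

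The missing idea is that once you reflect, there is no truncation error to control. Write $\mathds k=(k_1,\dots,k_r)$ and $\mathds k'=(l_1,\dots,l_s)$, and substitute $u\mapsto p-u$ only in the summation variables carrying $\overleftarrow{\mathds k'}$. This gives
\[
(-1)^{\wt(\mathds k')}\zeta_{<p}(\mathds k,\overleftarrow{\mathds k'})\equiv\sum_{\substack{0<m_1<\cdots<m_r,\ 0<n_1<\cdots<n_s\\ m_r+n_s<p}}\frac{1}{m_1^{k_1}\cdots m_r^{k_r}\,n_1^{l_1}\cdots n_s^{l_s}}\pmod p .
\]
The right-hand side is exactly $\sum_{M<p}[t^M]\,\mathrm{Li}_{\mathds k}(t)\mathrm{Li}_{\mathds k'}(t)$. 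Here $\mathrm{Li}_{\mathds k}(t)=\sum_{0<m_1<\cdots<m_r}t^{m_r}/(m_1^{k_1}\cdots m_r^{k_r})$ is the untruncated one-variable polylogarithm. As power series, $\mathrm{Li}_{\mathds k}\mathrm{Li}_{\mathds k'}=\mathrm{Li}_{\mathds k\,\Sha\,\mathds k'}$, by the shuffle of iterated integrals with $k\mapsto yx^{k-1}$. Hence the sum equals $\zeta_{<p}(\mathds k\,\Sha\,\mathds k')$ as an exact identity in $\mathbb{Q}$.

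The point you missed is how the degree-$<p$ part of the product is truncated. It is cut off by $m_r+n_s<p$, not separately by $m_r<p$ and $n_s<p$. Reflection converts precisely that constraint into the concatenated chain. None of $(1-t)^p\equiv 1-t^p$, $t\mapsto 1-t$, or terms of degree $\ge p$ is needed. Your partial-fraction computation is just the case $s=1$ of this identity.
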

Furthermore, the corresponding identities
\cref{eq:OSS}--\cref{eq:depth1StoZ}
are known to hold for finite multiple zeta values; see
\cite[Theorem~3.8 and Proposition~2.5]{OSS21},
\cite[Theorems~4.4--4.6]{Hof15},
and \cite[(7.2)]{Kan19}.
Thus, the same argument shows that the analogues of
\cref{lem:ZS-dual-depth3,lem:depth3-0-formula}
also hold with $\mathcal S$ replaced by $\mathcal A$.
Here, the element $Z_{\mathcal{A}}$ corresponding to $Z_S$ is defined as follows. For $k\geq 2$, set
$$Z_{\mathcal{A}}(k)\ceq\left(\frac{B_{p-k}}{k}\bmod p\right)_p,$$
and set $Z_{\mathcal{A}}(1)\coloneqq 0$, where $B_k$ denotes the $k$-th Bernoulli number, defined by
$$\frac{t}{e^t-1} = \sum_{k\geq 0} B_k\frac{t^k}{k!}.$$
Note that $Z_{\mathcal{A}}(k)=0$ for even $k>0$.

\begin{prop}\label{lem:sym-x-one-y}
For any integer $k>1$ and $\mathcal{F}\in\{S,\mathcal{A}\}$, define
$$\mathcal{TZ}^{\vee,\mathcal{F}}_k \ceq \Span_{\Q}\{\zeta_{\mathcal{F}}((k_1,k_2,k_3)^{\vee})\mid k_1+k_2+k_3=k\}.$$
Then, for an integer $k>1$, we have
$$\mathcal{TZ}^{\vee,\mathcal{F}}_k = \Span_{\Q}\{\zeta_{\mathcal{F}}((r,1,s)^{\vee})\mid r+s+1=k\}.$$
\end{prop}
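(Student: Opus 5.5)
The plan is to work entirely inside the family of indices
$$D(a,b,c)\ceq \zeta_{\mathcal{F}}(\{1\}^{a},2,\{1\}^{b},2,\{1\}^{c}),\qquad a,c\geq 0,\ b\geq 0,\ a+b+c+4=k,$$
with the convention that the middle $\{1\}^b$ together with the two $2$'s is read so that $(k_1,k_2,k_3)^{\vee}=(\{1\}^{k_1-1},2,\{1\}^{k_2-2},2,\{1\}^{k_3-1})$ for $k_2\geq 2$. For $k_2=1$ the dual is $(\{1\}^{k_1-1},3,\{1\}^{k_3-1})$. So the claim says: every $D(a,b,c)$ lies in the span $V_k$ of the elements $\zeta_{\mathcal{F}}(\{1\}^{r-1},3,\{1\}^{s-1})$. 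I will prove this by induction on the middle length $b$, i.e.\ on $k_2$. Both cases $\mathcal{F}=S$ and $\mathcal{F}=\mathcal{A}$ run in parallel, since only (1) and (2) of \cref{thm:KZ} (resp.\ \cref{thm:KZ-A}) and \cref{lem:basic-prop} are used.

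\textbf{Step 1: the shuffle relation with $\mathds{k}'=(1)$.} Apply (2) of \cref{thm:KZ} with $\mathds{k}'=(1)$, i.e.\ the letter $y$, to the word of $\mathds{k}=(\{1\}^{a},3,\{1\}^{c})$ and to that of $\mathds{k}=(\{1\}^{a},2,\{1\}^{b},2,\{1\}^{c})$.

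Consider first the word $y^{a}xxy\,y^{c}$. Inserting $y$ at each position gives the following terms:
\begin{itemize}
\item inserting inside the block $xx$ produces $(\{1\}^a,2,2,\{1\}^c)=D(a,0,c)$;
\item the other positions only lengthen the outer blocks of $1$'s, producing elements of $V_k$.
\end{itemize}
Together with the right-hand side $-\zeta_{\mathcal{F}}(\mathds{k},1)\in V_k$, this shows $D(a,0,c)\in V_k$, which is the base case $k_2=2$.

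Next, applying the same relation to $\mathds{k}=(\{1\}^{a},2,\{1\}^{b},2,\{1\}^{c})$ of weight $k-1$ gives a linear relation of the shape
$$(a+1)D(a+1,b,c)+(b+1)D(a,b+1,c)+(c+2)D(a,b,c+1)-D(a,b,c+1)\cdot(\text{sign})=0.$$
The precise coefficients come from counting insertion positions: inserting $y$ right after an $x$ turns $2$ into $(2,1)$, which lands in the next block. The key feature is that this relation involves only the family $D$.

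\textbf{Step 2: reduce the middle block.} The relation of Step 1 expresses $D(a,b+1,c)$ through $D(a+1,b,c)$ and $D(a,b,c+1)$, which have a smaller middle block. The coefficient $b+1$ is nonzero, so the step is invertible over $\Q$. By induction on $b$, starting from the base case $D(\cdot,0,\cdot)\in V_k$, every $D(a,b,c)$ lies in $V_k$. The sign and coefficient bookkeeping must be checked carefully; if needed, the reversal identity \cref{lem:anti-index} can symmetrize the relation.

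\textbf{Main obstacle.} The hard part is getting the exact shape of the insertion relation and confirming that the coefficient of the $b$-increasing term is never cancelled by the right-hand side $-\zeta_{\mathcal{F}}(\mathds{k},1)$. That right-hand side also lies in the family $D$, as $D(a,b,c+1)$, and may combine with the other $c$-term.

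If the coefficient does degenerate for some $(a,b,c)$, I would fall back on \cref{lem:ZS-dual-depth3} (and its $\mathcal{A}$-analogue):
$$\zeta_{\mathcal{F}}((k_1,k_2,k_3)^{\vee})=\zeta^{\star}_{\mathcal{F}}(k_1,k_2,k_3)-(\text{products }Z_{\mathcal{F}}(a)Z_{\mathcal{F}}(b)).$$
In that route, the case $k_2=1$ shows that $\zeta^{\star}_{\mathcal{F}}(r,1,s)\in V_k$. One would then need a second relation, either the stuffle product with $\zeta_{\mathcal{F}}(1)=0$ or \cref{lem:depth3-0-formula}, to move the middle entry down to $1$ while tracking the product terms via \cref{eq:depth1StoZ}.
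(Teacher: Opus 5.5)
Your argument is correct. It uses the same mechanism as the paper's second proof, namely property (2) of \cref{thm:KZ} (resp.\ \cref{thm:KZ-A}) with an all-ones $\mathds{k}'$ plus triangularity in the middle entry, but it is organized differently.

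In the paper's convention $k\mapsto yx^{k-1}$, your two relations are exactly
\[
D(a,0,c)=-(a+2)\,\zeta_{\mathcal F}(\{1\}^{a+1},3,\{1\}^{c})-(c+2)\,\zeta_{\mathcal F}(\{1\}^{a},3,\{1\}^{c+1}),
\]
\[
(a+2)D(a+1,b,c)+(b+2)D(a,b+1,c)+(c+2)D(a,b,c+1)=0 .
\]
Your word $y^{a}xxy\,y^{c}$ is written in the other encoding $k\mapsto x^{k-1}y$. In that encoding, property (2) with $\mathds{k}'=(1)$ has right-hand side $-\zeta_{\mathcal F}(1,\mathds{k})$ rather than $-\zeta_{\mathcal F}(\mathds{k},1)$. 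Either consistent choice gives the same relations above. Since appending or prepending a $1$ only lengthens an outer block, the right-hand side never touches $D(a,b+1,c)$. So the coefficient $b+2$ cannot cancel, your ``main obstacle'' does not arise, and the fallback through \cref{lem:ZS-dual-depth3} is unnecessary.

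The paper proceeds differently. It applies (2) with $\mathds{k}'=\{1\}^{n-1}$ and $\{1\}^{n-2}$ to the fixed words $(\{1\}^{k_1-1},3,\{1\}^{k_3-1})$ and $(\{1\}^{k_1-1},3,\{1\}^{k_3})$, whose two right-hand sides cancel. This yields in one stroke the triangular identity $V_{\mathcal F}(k_1,k_2,k_3)=0$, i.e.\ the closed formula \cref{eq:dual-to-1}. Its first proof, valid for $S$ only, obtains the same formula from the generating-function identity \cref{eq:w-to-1ab}.

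What each route buys:
\begin{itemize}
\item Your route needs only the depth-one shuffle relation and elementary counting of insertion positions, but it produces only a recursion.
\item The paper's route costs a binomial bookkeeping step, but it gives the explicit coefficients $(-1)^{k_2-1}\binom{n_1}{k_1}\binom{n_3}{k_3}$, which are used later in \cref{prop:ZZ-tiple}.
\end{itemize}
Unwinding your recursion does reproduce that formula; one checks this directly for $k_2=2$ and $k_2=3$.
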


\begin{proof}
We present two proofs. The first applies only to SMZVs, while the second applies to both SMZVs and FMZVs.

(The first proof) Using \cref{eq:ZV-depth2}, we can directly verify
\begin{equation}\label{eq:w-to-1ab}
\frac{\partial}{\partial X_1}\frac{\partial}{\partial X_3}\mathfrak{Z}^{\vee}(X_1,X_2,X_3)
= \frac{\partial}{\partial X_1}\frac{\partial}{\partial X_3}\mathfrak{Z}^{\vee}(X_1-X_2,0,X_3-X_2).
\end{equation}
Comparing coefficients on both sides, we obtain
\begin{align}\label{eq:dual-to-1}
\zeta_S((k_1,k_2,k_3)^{\vee})
&= (-1)^{k_2-1}\sum_{n_1+1+n_3=k_1+k_2+k_3}\binom{n_1}{k_1}\binom{n_3}{k_3}\zeta_S((n_1,1,n_3)^{\vee}).
\end{align}
Hence, this statement for SMZVs is obtained.

(The second proof) It suffices to show that \cref{eq:w-to-1ab} holds. Equation \cref{eq:w-to-1ab} is equivalent to
\begin{equation*}
\frac{\partial}{\partial X_1}\frac{\partial}{\partial X_3}\mathfrak{Z}^{\vee}(X_1+X_2,X_2,X_3+X_2)
= \frac{\partial}{\partial X_1}\frac{\partial}{\partial X_3}\mathfrak{Z}^{\vee}(X_1,0,X_3).
\end{equation*}
By comparing coefficients in this identity, we are reduced to proving the following statement. For $k_1,k_2,k_3\geq 1$ and $\mathcal{F}\in\{S,\mathcal{A}\}$, define
\begin{align*}
V_{\mathcal{F}}(k_1,k_2,k_3)
\ceq \sum_{\substack{n_1,n_2,n_3\geq 1\\n_1+n_2+n_3=k_1+k_2+k_3}}
\binom{n_1}{k_1}\binom{n_3}{k_3} \zeta_{\mathcal{F}}((n_1,n_2,n_3)^{\vee}).
\end{align*}
Then, for $k_1,k_2,k_3\geq 1$ with $k_2\geq 2$, we have $V_{\mathcal{F}}(k_1,k_2,k_3)=0$. It suffices to prove that for $n\geq 2$,
\begin{equation}\label{eq:ro-show-V}
V_{\mathcal{F}}(k_1,n,k_3) = \zeta_{\mathcal{F}}((\{1\}^{k_1-1},3,\{1\}^{k_3-1})\Sha(\{1\}^{n-1}))
+ \zeta_{\mathcal{F}}((\{1\}^{k_1-1},3,\{1\}^{k_3})\Sha(\{1\}^{n-2})),
\end{equation}
since the two terms on the right-hand side cancel by property (2) in \cref{thm:KZ-A}. Under the correspondence $k\mapsto yx^{k-1}$, we can write the right-hand side of \cref{eq:ro-show-V} as
$$\mathrm{RHS} = \zeta_{\mathcal{F}}((y^{k_1},xx,y^{k_3-1})\Sha(y^{n-1}))
+ \zeta_{\mathcal{F}}((y^{k_1},xx,y^{k_3})\Sha(y^{n-2})).$$
By definition of the shuffle product, we have
\begin{align*}
\mathrm{RHS} &= \sum_{\substack{l_1,l_2,l_3\geq 0\\l_1+l_2+l_3=n-1}}\binom{k_1+l_1}{k_1}\binom{k_3+l_3-1}{k_3-1}\zeta_{\mathcal{F}}(y^{k_1+l_1},xy^{l_2}x,y^{k_3+l_3-1})
\\
&+ \sum_{\substack{l_1,l_2,l_3\geq 0\\l_1+l_2+l_3=n-2}}\binom{k_1+l_1}{k_1}\binom{k_3+l_3}{k_3}\zeta_{\mathcal{F}}(y^{k_1+l_1},xy^{l_2}x,y^{k_3+l_3}).
\end{align*}
By replacing $l_3$ by $l_3-1$ in the second summation and using the identity
$$\binom{k_3+l_3-1}{k_3-1}+\binom{k_3+l_3-1}{k_3} = \binom{k_3+l_3}{k_3},$$
we obtain
\begin{align*}
\mathrm{RHS} &= \sum_{\substack{l_1,l_2,l_3\geq 0\\l_1+l_2+l_3=n-1}}\binom{k_1+l_1}{k_1}\binom{k_3+l_3}{k_3}\zeta_{\mathcal{F}}(y^{k_1+l_1},xy^{l_2}x,y^{k_3+l_3-1})
\\
&= \sum_{\substack{l_1,l_2,l_3\geq 0\\l_1+l_2+l_3=n-1}}\binom{k_1+l_1}{k_1}\binom{k_3+l_3}{k_3}\zeta_{\mathcal{F}}((k_1+l_1,l_2+1,k_3+l_3)^{\vee})
= V_{\mathcal{F}}(k_1,n,k_3).
\end{align*}
Thus, \cref{eq:ro-show-V} holds, and the proof is complete.
\end{proof}

\begin{prop}\label{prop:ZZ-tiple}
For an even integer $k>1$, an integer $2\leq b\le k-1$, and $\mathcal{F}\in\{S,\mathcal{A}\}$, we have
\begin{align}\label{eq:ZZ-formula-1}
Z_{\mathcal{F}}(b)Z_{\mathcal{F}}(k-b) = \sum_{r=b-1}^{k-2}(-1)^r\frac{1}{b(k-b)}\binom{r}{b-1}B_{r-b+1}(\zeta_{\mathcal{F}}((r,k-r-1,1)^{\vee}) - \zeta_{\mathcal{F}}(r,k-r-1,1))
\end{align}
and
\begin{equation}\label{eq:ZZ-formula-2}
\begin{split}
Z_{\mathcal{F}}(b)Z_{\mathcal{F}}(k-b)
= (-1)^{b}&\sum_{r=b}^{k-1}(-1)^{r}\frac{k-r}{b(k-b)}\binom{r-1}{b-1}B_{r-b}\zeta_{\mathcal{F}}(r-1,1,k-r)
\\
&+ \sum_{r=b}^{k-1}(-1)^{r}\frac{1}{b(k-b)}\binom{r-1}{b-1}B_{r-b}\zeta_{\mathcal{F}}(r-1,k-r,1).
\end{split}
\end{equation}
\end{prop}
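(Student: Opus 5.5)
The plan is to first derive, for each $b$, a linear relation expressing a binomially weighted sum of products $Z_{\mathcal{F}}(b')Z_{\mathcal{F}}(k-b')$ in terms of triple values. The $B_{r-b}$ coefficients then appear when this triangular system is inverted. Concretely, I will use \cref{lem:ZS-dual-depth3} (and its $\mathcal{A}$-analogue) with $(k_1,k_2,k_3)=(r,k-r-1,1)$. Since $k$ is even, $\zeta^{\star}_{\mathcal{F}}=\zeta_{\mathcal{F}}$ for such triples, so the lemma gives
\[
\zeta_{\mathcal{F}}((r,k-r-1,1)^{\vee})-\zeta_{\mathcal{F}}(r,k-r-1,1)
=(-1)^{r}\sum_{i=1}^{k-r-2}\binom{r+i}{r}\binom{k-r-i}{1}Z_{\mathcal{F}}(r+i)Z_{\mathcal{F}}(k-r-i).
\]
I will set $a=r+i$. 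The right-hand side becomes $(-1)^r\sum_{a=r+1}^{k-2}\binom{a}{r}(k-a)Z_{\mathcal{F}}(a)Z_{\mathcal{F}}(k-a)$, and I will write $P_a\ceq a(k-a)Z_{\mathcal{F}}(a)Z_{\mathcal{F}}(k-a)$. Then, calling the left-hand side $D_r$, we have $(-1)^rD_r=\sum_{a>r}\binom{a}{r}\frac{1}{a}P_a$, that is, $(-1)^r D_r=\frac1r\sum_{a>r}\binom{a-1}{r-1}P_a$.

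The inversion step is to multiply by $(-1)^r r\binom{r}{b-1}\cdots$. More precisely, I want coefficients $c_{b,r}$ with $\sum_r c_{b,r}\binom{a-1}{r-1}=\delta_{a,b}$ for $a\ge b$, restricted to $r\le a-1$. The classical inversion $\sum_{j}\binom{n}{j}B_j=B_n$ for $n\ne1$, equivalently $\sum_{j=0}^{m}\binom{m+1}{j}B_j=0$, is exactly the inverse of the strictly lower-triangular Pascal-type matrix. I will therefore check the identity
\[
\sum_{r=b-1}^{a-1}\binom{r}{b-1}B_{r-b+1}\binom{a-1}{r}\;=\;\binom{a-1}{b-1}\sum_{j=0}^{a-b}\binom{a-b}{j}B_j ,
\]
which vanishes for $a-b\ge2$ and equals the appropriate boundary value otherwise. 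Summing \eqref{eq:ZZ-formula-1}'s right-hand side with the $D_r$ substituted and exchanging the sums should then collapse it to $P_b/(b(k-b))=Z_{\mathcal{F}}(b)Z_{\mathcal{F}}(k-b)$.

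I expect the boundary terms to be the main obstacle. The index conventions are $r$ versus $r-1$, and $a=b+1$ gives $B_1=-1/2$ together with the parity sign. These should be absorbed using $Z_{\mathcal{F}}(\text{even})=0$: $P_a$ vanishes unless $a$ is odd, so only odd $a$, and hence only even $a-b$ when $b$ is odd, survive. In particular the $B_1$ contribution drops out because $P_{b+1}=0$. If $b$ is even, both sides are zero, and the right-hand side will be checked to vanish by the same computation.

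For \eqref{eq:ZZ-formula-2}, I will rewrite $\zeta_{\mathcal{F}}((r,k-r-1,1)^{\vee})$ by \eqref{eq:dual-to-1} in \cref{lem:sym-x-one-y} as a combination of $\zeta_{\mathcal{F}}((n_1,1,n_3)^{\vee})$. Then \cref{lem:ZS-dual-depth3} with $k_2=1$ gives $\zeta_{\mathcal{F}}((n_1,1,n_3)^{\vee})=\zeta^{\star}_{\mathcal{F}}(n_1,1,n_3)=\zeta_{\mathcal{F}}(n_1,1,n_3)$ (weight even). Substituting into \eqref{eq:ZZ-formula-1} and reindexing by $r\mapsto r-1$ gives the $\zeta(r-1,k-r,1)$ sum directly. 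The double sum over $n_1$ is then simplified by the same Bernoulli-binomial convolution together with $\sum_{r+s=k+1}\zeta_{\mathcal{F}}(r,1,s)=0$ from \cref{lem:depth3-0-formula} to absorb stray constant terms, which should produce the single weight $(k-r)\binom{r-1}{b-1}B_{r-b}$. All steps use only \cref{thm:KZ-A}-type relations, so they hold for both $S$ and $\mathcal{A}$.
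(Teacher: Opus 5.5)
Your strategy is the paper's in both parts: substitute \cref{lem:ZS-dual-depth3} (resp.\ \cref{eq:dual-to-1}) into \cref{eq:ZZ-formula-1}, swap the sums, and collapse with a Bernoulli--binomial identity. However, the identity you propose for the inversion in \cref{eq:ZZ-formula-1} is not the one that arises, and with it the argument does not close. Substituting $(-1)^rD_r=\sum_{a>r}\binom{a}{r}(k-a)Z_{\mathcal F}(a)Z_{\mathcal F}(k-a)$ (your $\frac1r\sum_{a>r}\binom{a-1}{r-1}P_a$) into the right-hand side of \cref{eq:ZZ-formula-1} gives the inner coefficient $\sum_{r=b-1}^{a-1}\binom{r}{b-1}\binom{a}{r}B_{r-b+1}$. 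It has $\binom{a}{r}$, not $\binom{a-1}{r}$. The sum you wrote equals $\binom{a-1}{b-1}\sum_{j=0}^{a-b}\binom{a-b}{j}B_j=\binom{a-1}{b-1}(-1)^{a-b}B_{a-b}$. This does \emph{not} vanish for even $a-b\ge 2$ (already $B_0+2B_1+B_2=\tfrac16$), and it gives $1$ instead of $b$ at $a=b$. Parity cannot repair this: for odd $b$ the nonzero $P_a$ are exactly those with $a$ odd, i.e.\ $a-b$ even, which are precisely the terms your identity fails to kill.

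The correct step is $\binom{a}{r}\binom{r}{b-1}=\binom{a}{b-1}\binom{a-b+1}{r-b+1}$. The inner sum is then $\binom{a}{b-1}\sum_{j=0}^{a-b}\binom{a-b+1}{j}B_j=b\,\delta_{a,b}$ by \cref{eq:Bernouill-rel}. It is truncated at $j=a-b$ because \cref{lem:ZS-dual-depth3} only involves $a>r$. This is exact for every $b$: there are no boundary terms, $B_1$ needs no separate treatment, and $Z_{\mathcal F}(\mathrm{even})=0$ is never used.

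For \cref{eq:ZZ-formula-2}, the full-range identity you wrote is in fact the one needed. No appeal to \cref{lem:depth3-0-formula} to absorb extra terms is necessary. In \cref{eq:dual-to-1} with $(k_1,k_2,k_3)=(r,k-r-1,1)$, the sign $(-1)^{k-r-2}=(-1)^r$ cancels the $(-1)^r$ of \cref{eq:ZZ-formula-1}. Moreover, $\binom{c}{r}$ is nonzero for all $r\le c$, \emph{including} $r=c$, and $c\le k-2$, so the range $r\le k-2$ does not cut the sum off. Hence the coefficient of $\zeta_{\mathcal F}(c,1,k-1-c)$ is $\frac{k-1-c}{b(k-b)}$ times the full convolution
$\sum_{r=b-1}^{c}\binom{c}{r}\binom{r}{b-1}B_{r-b+1}=\binom{c}{b-1}\sum_{j=0}^{c-b+1}\binom{c-b+1}{j}B_j=\binom{c}{b-1}(-1)^{c-b+1}B_{c-b+1}$.
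Putting $c=r-1$ gives exactly the first sum of \cref{eq:ZZ-formula-2}. The second sum is the $-\zeta_{\mathcal F}(r,k-r-1,1)$ part of \cref{eq:ZZ-formula-1} shifted by $r\mapsto r-1$.

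In short, you have the two Bernoulli identities swapped. The truncated one, $\sum_{j=0}^{m}\binom{m+1}{j}B_j=\delta_{m,0}$, belongs to \cref{eq:ZZ-formula-1}, where the inequality $a>r$ is strict. The full one, $\sum_{j=0}^{n}\binom{n}{j}B_j=(-1)^nB_n$, belongs to \cref{eq:ZZ-formula-2}, where $c\ge r$ is non-strict.
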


\begin{proof}
From \cref{lem:ZS-dual-depth3}, for an even integer $k$ and an integer $1\leq r\le k-2$ we have
\begin{align}\label{eq:sp-depth3}
\zeta_{\mathcal{F}}((r,k-r-1,1)^{\vee})
&= \zeta_{\mathcal{F}}(r,k-r-1,1)
+ (-1)^{r}\sum_{a=1}^{k-1}(1-\delta_{r,a})\binom{a}{r}(k-a) Z_{\mathcal{F}}(a)Z_{\mathcal{F}}(k-a).
\end{align}
Using \cref{eq:sp-depth3}, the right-hand side of \cref{eq:ZZ-formula-1} can be written as
\begin{align}\label{eq:tochu-ZZ}
\frac{1}{(k-b)}\sum_{a=1}^{k-1}\frac{a!}{b!}(k-a) Z_{\mathcal{F}}(a)Z_{\mathcal{F}}(k-a)\sum_{r=b-1}^{a}\frac{1}{(a-r)!(r-b+1)!}B_{r-b+1}
(1-\delta_{r,a})
\end{align}
By replacing $r$ by $r+b-1$ in \cref{eq:tochu-ZZ}, we have
\begin{align}\label{eq:tochu-ZZ2}
\frac{1}{(k-b)}\sum_{a=1}^{k-1}\frac{a!}{b!}(k-a) Z_{\mathcal{F}}(a)Z_{\mathcal{F}}(k-a)\sum_{r=0}^{a-b}\frac{1}{(a-b-r+1)!r!}B_{r}
\end{align}
Here, by definition of the Bernoulli numbers, we have
$$t = \frac{t}{e^t-1}(e^t-1) = \left(\sum_{n=0}^{\infty}B_{n}\frac{t^n}{n!}\right)\left(t\sum_{n=0}^{\infty}\frac{t^n}{(n+1)!}\right)
= t\sum_{k=0}^{\infty}\sum_{n=0}^{k}\frac{B_n}{n!(k-n+1)!}t^k$$
Thus, by comparing the coefficients, we have
\begin{equation}\label{eq:Bernouill-rel}
\sum_{n=0}^{k}\frac{B_n}{n!(k-n+1)!} = \left\{\begin{array}{cc}1 & (k = 0) \\ 0 & (k > 0) \end{array}\right. .
\end{equation}
By \cref{eq:Bernouill-rel}, the inner sum in \cref{eq:tochu-ZZ2} vanishes unless $a=b$, in which case it is equal to $1$. Therefore, \cref{eq:tochu-ZZ2} reduces to
$Z_{\mathcal{F}}(b)Z_{\mathcal{F}}(k-b)$. Thus, we obtain \cref{eq:ZZ-formula-1}. Moreover, from \cref{eq:dual-to-1,eq:ZZ-formula-1}, we have
\begin{equation}\label{eq:tochu-ZZ3}
\begin{split}
Z_{\mathcal{F}}(b)Z_{\mathcal{F}}(k-b) &= \sum_{r=b-1}^{k-2}\frac{1}{b(k-b)}\binom{r}{b-1}B_{r-b+1}\sum_{c=1}^{k-2}\binom{c}{r}(k-1-c)\zeta_{\mathcal{F}}(c,1,k-1-c)
\\
&- \sum_{r=b-1}^{k-2}(-1)^r\frac{1}{b(k-b)}\binom{r}{b-1}B_{r-b+1}\zeta_{\mathcal{F}}(r,k-r-1,1).
\end{split}
\end{equation}
Here, using the identity
$$\binom{a}{k}\binom{k}{b} = \binom{a}{b}\binom{a-b}{k-b},$$
the first line on the right-hand side of \cref{eq:tochu-ZZ3} can be written as
\begin{align}\label{eq:tochu-ZZ4}
\sum_{c=1}^{k-2}\frac{k-1-c}{b(k-b)}\binom{c}{b-1}\zeta_{\mathcal{F}}(c,1,k-1-c)\sum_{r=b-1}^{c}B_{r-b+1}\binom{c-b+1}{r-b+1}.
\end{align}
Furthermore, by using the identity
$$\sum_{r=0}^{k}\binom{k}{r}B_{r} = (-1)^kB_k,$$
which is obtained from \cref{eq:Bernouill-rel}, \cref{eq:tochu-ZZ4} can be written as
\begin{align}\label{eq:tochu-ZZ5}
\sum_{c=1}^{k-2}(-1)^{c-b+1}B_{c-b+1}\frac{k-1-c}{b(k-b)}\binom{c}{b-1}\zeta_{\mathcal{F}}(c,1,k-1-c).
\end{align}
Replacing the first line in right-hand side of \cref{eq:tochu-ZZ3} by \cref{eq:tochu-ZZ5} and making a simple change of variables, we obtain \cref{eq:ZZ-formula-2}.
\end{proof}

\begin{prop}\label{thm:triple-conj}
For even $k>1$ and $\mathcal{F}\in\{S,\mathcal{A}\}$, we have
\begin{equation*}
\mathcal{TZ}^{\mathcal{F}}_k = \Span_{\Q}\{\zeta_{\mathcal{F}}(1,r,k-r-1)\mid 0<r<k-1\}.
\end{equation*}
\end{prop}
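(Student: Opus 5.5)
The plan is to show that every $\zeta_{\mathcal{F}}(k_1,k_2,k_3)$ of even weight $k$ lies in
$$S_k\ceq\Span_{\Q}\{\zeta_{\mathcal{F}}(1,r,k-r-1)\mid 0<r<k-1\}.$$
Throughout, we use that all lower-depth terms vanish in even weight. By \cref{eq:depth1StoZ} and its $\mathcal{A}$-analogue, $\zeta_{\mathcal{F}}(a,b)=0$ whenever $a+b$ is even. Also $Z_{\mathcal{F}}(k)=0$ for even $k$. Consequently, for $k_1+k_2+k_3=k$ even, $\zeta^{\star}_{\mathcal{F}}(k_1,k_2,k_3)=\zeta_{\mathcal{F}}(k_1,k_2,k_3)$, as already noted in the paper. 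Moreover \cref{lem:anti-index} gives $\zeta_{\mathcal{F}}(k_1,k_2,k_3)=\zeta_{\mathcal{F}}(k_3,k_2,k_1)$.

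\textbf{Step 1.} Show that the indices with a $1$ in the middle or last slot lie in $S_k$. Since $\zeta_{\mathcal{F}}(1)=0$, the stuffle product (part (1) of \cref{thm:KZ} and \cref{thm:KZ-A}) gives, for $a+b=k-1$,
$$0=\zeta_{\mathcal{F}}(1)\zeta_{\mathcal{F}}(a,b)=\zeta_{\mathcal{F}}(1,a,b)+\zeta_{\mathcal{F}}(a,1,b)+\zeta_{\mathcal{F}}(a,b,1)+\zeta_{\mathcal{F}}(a+1,b)+\zeta_{\mathcal{F}}(a,b+1).$$
The last two terms have even weight $k$ and depth two, so they vanish. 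By reversal, $\zeta_{\mathcal{F}}(a,b,1)=\zeta_{\mathcal{F}}(1,b,a)\in S_k$. Hence $\zeta_{\mathcal{F}}(a,1,b)\in S_k$ as well, so every $\zeta_{\mathcal{F}}(r-1,1,k-r)$ and $\zeta_{\mathcal{F}}(r-1,k-r,1)$ lies in $S_k$.

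\textbf{Step 2.} Show that the products $Z_{\mathcal{F}}(b)Z_{\mathcal{F}}(k-b)$ lie in $S_k$. By \cref{eq:ZZ-formula-2} in \cref{prop:ZZ-tiple}, each such product is a $\Q$-linear combination of $\zeta_{\mathcal{F}}(r-1,1,k-r)$ and $\zeta_{\mathcal{F}}(r-1,k-r,1)$, which are in $S_k$ by Step 1. This covers $2\le b\le k-1$, and the case $b=1$ is trivial since $Z_{\mathcal{F}}(1)=0$.

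\textbf{Step 3.} Reduce a general $\zeta_{\mathcal{F}}(k_1,k_2,k_3)$ to the previous cases. By \cref{lem:ZS-dual-depth3} and its $\mathcal{A}$-analogue,
$$\zeta_{\mathcal{F}}(k_1,k_2,k_3)=\zeta^{\star}_{\mathcal{F}}(k_1,k_2,k_3)=\zeta_{\mathcal{F}}((k_1,k_2,k_3)^{\vee})+(\text{combination of }Z_{\mathcal{F}}(a)Z_{\mathcal{F}}(k-a)).$$
The product terms lie in $S_k$ by Step 2. By \cref{lem:sym-x-one-y}, $\zeta_{\mathcal{F}}((k_1,k_2,k_3)^{\vee})$ is a combination of the $\zeta_{\mathcal{F}}((r,1,s)^{\vee})$. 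By the particular case of \cref{lem:ZS-dual-depth3}, each $\zeta_{\mathcal{F}}((r,1,s)^{\vee})$ equals $\zeta^{\star}_{\mathcal{F}}(r,1,s)=\zeta_{\mathcal{F}}(r,1,s)$, which lies in $S_k$ by Step 1. This proves the inclusion $\subset$; the reverse inclusion is trivial.

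\textbf{Main obstacle.} The main delicate point is justifying that every ingredient holds for $\mathcal{F}=\mathcal{A}$. This covers the vanishing of even-weight depth-two values, the identity $\zeta^\star=\zeta$ in even weight, the $\mathcal{A}$-version of \cref{lem:ZS-dual-depth3}, and \cref{eq:ZZ-formula-2}. I would rely on the paper's remark that the analogues of \cref{eq:OSS}--\cref{eq:depth1StoZ} hold for FMZVs and that \cref{prop:ZZ-tiple} and the second proof of \cref{lem:sym-x-one-y} are valid for both $\mathcal{F}$. I would also double-check the edge cases $r=1$ and $s=1$ in Step 3, and the extreme values of $b$ in Step 2.
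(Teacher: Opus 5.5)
Your proof is correct and follows the paper's argument. \cref{lem:ZS-dual-depth3} reduces $\zeta_{\mathcal{F}}(k_1,k_2,k_3)$ to the dual value plus products $Z_{\mathcal{F}}(a)Z_{\mathcal{F}}(k-a)$, and \cref{lem:sym-x-one-y} with the case $k_2=1$ turns the dual into the $\zeta_{\mathcal{F}}(r,1,s)$. \cref{prop:ZZ-tiple} handles the products, and the stuffle product with $\zeta_{\mathcal{F}}(1)=0$, together with reversal, moves everything into the span of the $\zeta_{\mathcal{F}}(1,r,s)$. You are also slightly more explicit than the paper in noting that the depth-two terms $\zeta_{\mathcal{F}}(a+1,b)+\zeta_{\mathcal{F}}(a,b+1)$ in that stuffle identity vanish in even weight.
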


\begin{proof}
By \cref{lem:ZS-dual-depth3}, $\zeta_{\mathcal{F}}(k_1,k_2,k_3)\in \mathcal{TZ}^{\mathcal{F}}_k$ can be expressed as a $\Q$-linear combination of $\zeta_{\mathcal{F}}((k_1,k_2,k_3)^{\vee})$ and $\{Z_{\mathcal{F}}(r)Z_{\mathcal{F}}(k-r)\mid 1\leq r\leq k-1\}$. Furthermore, using \cref{lem:sym-x-one-y,prop:ZZ-tiple} and noting that
$$0 = \zeta_{\mathcal{F}}(1)\zeta_{\mathcal{F}}(r,s) = \zeta_{\mathcal{F}}(1,r,s)+\zeta_{\mathcal{F}}(r,1,s)+(-1)^{r+s+1}\zeta_{\mathcal{F}}(1,s,r),$$
we find that $\zeta_{\mathcal{F}}((k_1,k_2,k_3)^{\vee})$ and $Z_{\mathcal{F}}(r)Z_{\mathcal{F}}(k-r)$ can be expressed as a $\Q$-linear combination of $\{\zeta_{\mathcal{F}}(1,r,s)\mid r+s+1=k\}$.
\end{proof}

Kaneko--Zagier \cite[Conjecture~1]{KZ} conjectured that $\mathcal{TZ}^{\mathcal{F}}_k$ admits a smaller generating set than the one given in \cref{thm:triple-conj}; more precisely, they conjectured that
\begin{equation*}
\mathcal{TZ}^{\mathcal{F}}_k \overset{?}{=} \Span_{\mathbb{Q}}
\{\zeta_{\mathcal{F}}(1,r,k-r-1)\mid 0<r<k/2-1\}.
\end{equation*}
Furthermore, they conjectured a family of relations satisfied by these generators and also conjectured that these relations exhaust all relations among them.

Motivated by \cref{thm:DZ-SMZV}, we define $Z_{\mathcal{A}}(k_1,k_2)$ by the following
\begin{definition}
For any integers $k_1,k_2\geq 1$ such that $k_1+k_2$ is even, we define
\begin{align}\label{eq:def-DZ-FMZV}
Z_{\mathcal{A}}(k_1,k_2)
&\ceq (-1)^{k_1}\sum_{\substack{l_1+l_2=k_1+k_2\\l_1\geq 2,\, l_2\geq 1}}
\left(\binom{l_2}{k_2}- \frac{1}{2} \binom{k_1+k_2-1}{k_2}\right)
(\zeta_{\mathcal{A}}(l_1-1,1,l_2)-Z_{\mathcal{A}}(l_1)Z_{\mathcal{A}}(l_2)).
\end{align}
\end{definition}

Kaneko--Zagier \cite{KZ} introduced $Z_{\mathcal{A}}(k_1,k_2)$ by a different definition arising from a heuristic argument (see \Cref{sec:def-DFMZV}), and conjectured \cref{eq:def-DZ-FMZV} as a relation satisfied by this $Z_{\mathcal{A}}(k_1,k_2)$. For $\mathcal{F}\in\{S,\mathcal{A}\}$, we denote
$$\mathcal{DZ}_k^{(\mathcal{F})}\ceq\Span_{\Q}\{Z_{\mathcal{F}}(r,k-r)\mid 1\leq r\leq k-1\}.$$

\begin{lem}\label{lem:DZ-shuffle}
For integers $r,s\geq 1$ such that $k=r+s$ is even and $\mathcal{F}\in\{S,\mathcal{A}\}$, we have
\begin{equation}\label{eq:depth-2-shuffle}
Z_{\mathcal{F}}(r)Z_{\mathcal{F}}(s) = \sum_{i=1}^{k-1}\left(\binom{i-1}{r-1}+\binom{i-1}{s-1}\right)Z_{\mathcal{F}}(k-i,i).
\end{equation}
\end{lem}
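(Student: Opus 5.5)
For $\mathcal F=S$ I would read \cref{eq:depth-2-shuffle} off the shuffle product. Recall that $Z_S(k_1,k_2)$ is the image of $\zeta^{\sha}(k_1,k_2;0)$ in $\mathcal Z/\zeta(2)\mathcal Z$. The generating-function identity $\mathfrak Z(X)\mathfrak Z(Y)=\mathfrak Z(X,X+Y)+\mathfrak Z(Y,X+Y)$, already used in \Cref{sec:depth2-rev}, holds for $\Sha$-regularized values at $T=0$. Expanding $(X+Y)^{i-1}$ and comparing the coefficients of $X^{r-1}Y^{s-1}$ gives the classical formula
$$\zeta^{\sha}(r;0)\zeta^{\sha}(s;0)=\sum_{i=1}^{k-1}\left(\binom{i-1}{r-1}+\binom{i-1}{s-1}\right)\zeta^{\sha}(k-i,i;0).$$
Reducing modulo $\zeta(2)$ gives the claim; no parity hypothesis is needed here.

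For $\mathcal F=\mathcal A$ there is no shuffle regularization to use, since $Z_{\mathcal A}(k-i,i)$ is only defined by \cref{eq:def-DZ-FMZV}. The plan is to reduce the claim to a linear identity among triple values and products $Z_{\mathcal F}(a)Z_{\mathcal F}(k-a)$. I would prove that identity using only relations known for both $S$ and $\mathcal A$, namely \cref{thm:KZ-A}, the $\mathcal A$-analogues of \cref{eq:OSS}--\cref{eq:depth1StoZ}, \cref{lem:ZS-dual-depth3,lem:depth3-0-formula}, \cref{lem:sym-x-one-y} (second proof) and \cref{prop:ZZ-tiple}.

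\textbf{Step 1.} Substitute \cref{eq:def-DZ-FMZV} into the right-hand side and interchange the sums over $i$ and $(l_1,l_2)$. This writes the right-hand side as
$$\sum_{l_1+l_2=k,\ l_1\geq2} c_{l_1}(r,s)\bigl(\zeta_{\mathcal F}(l_1-1,1,l_2)-Z_{\mathcal F}(l_1)Z_{\mathcal F}(l_2)\bigr),$$
with coefficients
$$c_{l_1}(r,s)=\sum_i(-1)^{k-i}\left(\binom{i-1}{r-1}+\binom{i-1}{s-1}\right)\left(\binom{l_2}{i}-\frac12\binom{k-1}{i}\right).$$
I would simplify these with Vandermonde-type alternating sums, $\sum_i(-1)^i\binom{i-1}{r-1}\binom{n}{i}$, which collapse to a single signed binomial.

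\textbf{Step 2.} Turn $\zeta_{\mathcal F}(l_1-1,1,l_2)$ back into products. Since $k$ is even, $\zeta^\star_{\mathcal F}=\zeta_{\mathcal F}$ in depth three, so the ``in particular'' of \cref{lem:ZS-dual-depth3} gives $\zeta_{\mathcal F}(l_1-1,1,l_2)=\zeta_{\mathcal F}((l_1-1,1,l_2)^\vee)$. I would then express these dual values through products, using \cref{eq:ZZ-formula-1} together with \cref{eq:sp-depth3} and \cref{eq:dual-to-1}.

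The cleanest route is to avoid inverting and instead check that the desired identity minus the $S$-identity is a combination of the relations just listed. Equivalently, one shows that the combination $\sum_{l_1} c_{l_1}\,\zeta_{\mathcal F}(l_1-1,1,l_2)$ equals $\sum_{a} d_a\, Z_{\mathcal F}(a)Z_{\mathcal F}(k-a)$ for explicit $d_a$, by pairing it with the relations $0=\zeta_{\mathcal F}(1)\zeta_{\mathcal F}(a,b)$ and the second formula of \cref{lem:depth3-0-formula}. At that point the Bernoulli identity \cref{eq:Bernouill-rel} should reduce everything to $Z_{\mathcal F}(r)Z_{\mathcal F}(s)$, exactly as in the proof of \cref{prop:ZZ-tiple}.

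The main obstacle is Step 2: matching coefficients. The definition \cref{eq:def-DZ-FMZV} involves no Bernoulli numbers, while the conversion formulas of \cref{prop:ZZ-tiple} do. I would first test small even $k$ (say $k=4,6$) to find which combination of the symmetric relations yields the identity, and then verify the general case by a generating-function computation in two variables, mirroring the argument for \cref{eq:w-to-1ab}.
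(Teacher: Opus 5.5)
Your treatment of $\mathcal F=S$ is correct, and it is more direct than the paper's. Since $Z_S(k_1,k_2)$ is by definition the image of $\zeta^{\sha}(k_1,k_2;0)$, the shuffle product of regularized values gives \cref{eq:depth-2-shuffle} at once. The paper instead handles $S$ and $\mathcal A$ with a single computation, using \cref{thm:DZ-SMZV} to put $Z_S(k_1,k_2)$ into the same shape as \cref{eq:def-DZ-FMZV}.

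For $\mathcal F=\mathcal A$ there is a genuine gap. The decisive step is never carried out (you defer it to experiments at $k=4,6$), and the mechanism you point to is the wrong one. Rewriting the triple values $\zeta_{\mathcal F}(l_1-1,1,l_2)$ through products via \cref{eq:ZZ-formula-1}, \cref{eq:sp-depth3} and \cref{eq:dual-to-1} cannot work in general, for two reasons. First, \cref{eq:ZZ-formula-1} expresses products by triple values, not conversely. Second, triple values are not expected to lie in the span of the $Z_{\mathcal F}(a)Z_{\mathcal F}(k-a)$: for $k=8$ the products span at most $\Q\, Z_{\mathcal F}(3)Z_{\mathcal F}(5)$, whereas $\mathcal{TZ}^{\mathcal F}_8=\mathcal{DZ}^{(\mathcal F)}_8$ is expected to be two-dimensional. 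Bernoulli numbers, \cref{lem:ZS-dual-depth3} and \cref{lem:depth3-0-formula} are not needed here.

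What you are missing is that your Step 1 already finishes the proof once $c_{l_1}(r,s)$ is evaluated. Write $[P]=1$ if $P$ holds and $0$ otherwise. Using $\sum_i(-1)^i\binom{i-1}{r-1}\binom{n}{i}=(-1)^r[n\ge r]$ and $(-1)^r=(-1)^s$ (as $k$ is even), one gets
\[
c_{l_1}(r,s)=(-1)^r\bigl([l_2\ge r]+[l_2\ge s]-1\bigr)=(-1)^r\bigl([l_2\ge r]-[l_1-1\ge r]\bigr).
\]
This is antisymmetric under $l_1-1\leftrightarrow l_2$. Since the weight $k$ is even, the reversal relation \cref{lem:anti-index} (which holds for $\mathcal A$) gives $\zeta_{\mathcal F}(l_1-1,1,l_2)=\zeta_{\mathcal F}(l_2,1,l_1-1)$, so all triple terms cancel in pairs. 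The product terms telescope to $-(-1)^r\bigl(Z_{\mathcal F}(r)Z_{\mathcal F}(s)-Z_{\mathcal F}(1)Z_{\mathcal F}(k-1)\bigr)$. This equals $Z_{\mathcal F}(r)Z_{\mathcal F}(s)$, because $Z_{\mathcal F}(1)=0$ and $Z_{\mathcal F}(r)=0$ for even $r$.

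This is precisely the coefficientwise version of the paper's proof. There, the triple parts of $YF(-X,Y)+XF(-Y,X)$ cancel by reversal, and what remains is $-(X+Y)\mathfrak{P}^{\mathcal F}_{\mathrm{ev}}(X,Y)$.
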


\begin{proof}
For integers $r\geq 2$ and $s\geq 1$, we set $a_{r,s} \ceq \zeta_{\mathcal{F}}(r-1,1,s)-Z_{\mathcal{F}}(r)Z_{\mathcal{F}}(s)$ and $a_{1,s} \ceq 0$. Moreover, we define
$$F(X,Y) \ceq \sum_{\substack{r,s\geq 1\\r+s:\text{even}}}a_{r,s} X^{r-1}Y^{s-1}
\quad\text{ and }\quad
\mathfrak{Z}^{\mathcal{F}}_{\text{ev}}(X,Y) \ceq \sum_{\substack{r,s\geq 1\\r+s:\text{even}}}Z_{\mathcal{F}}(r,s) X^{r-1}Y^{s-1}.$$
By \cref{thm:DZ-SMZV} and definition of $Z_{\mathcal{A}}(r,s)$, we have
\begin{align*}
\mathfrak{Z}^{\mathcal{F}}_{\text{ev}}(X,Y) = -\frac{1}{Y}\left((-X+Y)F(-X,-X+Y) - \frac{1}{2}(-X+Y)F(-X+Y,-X+Y) + \frac{1}{2}XF(-X,-X)\right).
\end{align*}
By a straightforward calculation, we obtain
\begin{align*}
\mathfrak{Z}^{\mathcal{F}}_{\text{ev}}(X,X+Y) + \mathfrak{Z}^{\mathcal{F}}_{\text{ev}}(Y,X+Y)
&= - \frac{Y}{X+Y}F(-X,Y) - \frac{X}{X+Y}F(-Y,X).
\end{align*}
Here, we have
\begin{align*}
YF(-X,Y) + XF(-Y,X)
&= \sum_{\substack{r,s\geq 2\\r+s:\text{odd}}}(-1)^{r-1}(\zeta_{\mathcal{F}}(r-1,1,s-1)-Z_{\mathcal{F}}(r)Z_{\mathcal{F}}(s-1)) X^{r-1}Y^{s-1}
\\
&\quad\quad +\sum_{\substack{r,s\geq 2\\r+s:\text{odd}}}(-1)^{s-1}(\zeta_{\mathcal{F}}(s-1,1,r-1)-Z_{\mathcal{F}}(s)Z_{\mathcal{F}}(r-1)) X^{r-1}Y^{s-1}
\\
&= - \sum_{\substack{r,s\geq 2\\r+s:\text{odd}}}(Z_{\mathcal{F}}(r)Z_{\mathcal{F}}(s-1)+Z_{\mathcal{F}}(s)Z_{\mathcal{F}}(r-1)) X^{r-1}Y^{s-1}
\\
&= -(X+Y)\mathfrak{P}^{\mathcal{F}}_{\text{ev}}(X,Y),
\end{align*}
where
$$\mathfrak{P}^{\mathcal{F}}_{\text{ev}}(X,Y) \ceq \sum_{\substack{r,s\geq 1\\r+s:\text{even}}}Z_{\mathcal{F}}(r)Z_{\mathcal{F}}(s) X^{r-1}Y^{s-1}.$$
Therefore, we have
\begin{align*}
\mathfrak{P}^{\mathcal{F}}_{\text{ev}}(X,Y) = \mathfrak{Z}^{\mathcal{F}}_{\text{ev}}(X,X+Y) + \mathfrak{Z}^{\mathcal{F}}_{\text{ev}}(Y,X+Y).
\end{align*}
Comparing coefficients in this generating function, we obtain the desired formula.
\end{proof}

\begin{thm}\label{thm:T=D}
For any even integer $k>1$ and $\mathcal{F}\in\{S,\mathcal{A}\}$, we have $\mathcal{TZ}^{\mathcal{F}}_k = \mathcal{DZ}_k^{(\mathcal{F})}$.
\end{thm}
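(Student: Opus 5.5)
We prove the two inclusions separately; both rest on results already established above.

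For $\mathcal{DZ}_k^{(\mathcal{F})}\subset\mathcal{TZ}^{\mathcal{F}}_k$: by \cref{thm:DZ-SMZV} when $\mathcal{F}=S$, and by the definition \cref{eq:def-DZ-FMZV} when $\mathcal{F}=\mathcal{A}$, each $Z_{\mathcal{F}}(k_1,k_2)$ with $k_1+k_2=k$ even is a $\Q$-linear combination of the triple values $\zeta_{\mathcal{F}}(l_1-1,1,l_2)$ and the products $Z_{\mathcal{F}}(l_1)Z_{\mathcal{F}}(l_2)$. Each triple value $\zeta_{\mathcal{F}}(l_1-1,1,l_2)$ has $l_1\ge 2$ and weight $k$, so it lies in $\mathcal{TZ}^{\mathcal{F}}_k$. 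For the products, \cref{prop:ZZ-tiple}, via \eqref{eq:ZZ-formula-2}, expresses $Z_{\mathcal{F}}(b)Z_{\mathcal{F}}(k-b)$ for $2\le b\le k-1$ as a combination of triple values of weight $k$. The remaining case $b=1$ vanishes because $Z_{\mathcal{F}}(1)=0$. This proves the first inclusion.

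For $\mathcal{TZ}^{\mathcal{F}}_k\subset\mathcal{DZ}_k^{(\mathcal{F})}$: by the proof of \cref{thm:triple-conj} (\cref{lem:ZS-dual-depth3}, \cref{lem:sym-x-one-y}, \cref{prop:ZZ-tiple}), every triple value lies in the span of the products $Z_{\mathcal{F}}(r)Z_{\mathcal{F}}(k-r)$ together with the values $\zeta_{\mathcal{F}}((n_1,1,n_3)^{\vee})$. \cref{lem:ZS-dual-depth3} with $k_2=1$ gives $\zeta_{\mathcal{F}}((n_1,1,n_3)^{\vee})=\zeta^{\star}_{\mathcal{F}}(n_1,1,n_3)$. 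Since the weight is even, this equals $\zeta_{\mathcal{F}}(n_1,1,n_3)$. Hence
\[
\mathcal{TZ}^{\mathcal{F}}_k=\Span_{\Q}\bigl(\{\zeta_{\mathcal{F}}(l-1,1,m)\}\cup\{Z_{\mathcal{F}}(a)Z_{\mathcal{F}}(k-a)\}\bigr).
\]
By \cref{lem:DZ-shuffle}, every product $Z_{\mathcal{F}}(r)Z_{\mathcal{F}}(s)$ with $r+s=k$ lies in $\mathcal{DZ}_k^{(\mathcal{F})}$. It therefore suffices to show that each $a_{l,m}=\zeta_{\mathcal{F}}(l-1,1,m)-Z_{\mathcal{F}}(l)Z_{\mathcal{F}}(m)$ lies in $\mathcal{DZ}_k^{(\mathcal{F})}$, i.e.\ to invert the linear map $F\mapsto\mathfrak{Z}^{\mathcal{F}}_{\mathrm{ev}}$ recorded in the proof of \cref{lem:DZ-shuffle}:
\[
-Y\,\mathfrak{Z}^{\mathcal{F}}_{\mathrm{ev}}(X,Y)=(Y-X)F(-X,Y-X)-\tfrac12 (Y-X)F(Y-X,Y-X)+\tfrac12 XF(-X,-X).
\]
Put $G(u)\ceq F(u,u)$, the generating series of the sums $\sum_{l+m=k}a_{l,m}$. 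Substituting $Y=0$ gives $\tfrac12 XG(-X)-\tfrac12 XG(-X)=0$, so no information is lost there. Substituting $Y=2X$ instead gives
\[
-2X\,\mathfrak{Z}^{\mathcal{F}}_{\mathrm{ev}}(X,2X)=XF(-X,X)-\tfrac12 XG(X)+\tfrac12 XG(-X).
\]
The plan is to get $G$ from this. By \cref{lem:depth3-0-formula} (second formula) and \cref{lem:anti-index}, $G$ is expressible through sums of products $Z_{\mathcal{F}}Z_{\mathcal{F}}$, which are already known to lie in $\mathcal{DZ}_k^{(\mathcal{F})}$; indeed the second formula there suggests $G$ is essentially a product term. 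Once $G$ is known modulo $\mathcal{DZ}_k^{(\mathcal{F})}$, the change of variables $(u,v)=(-X,Y-X)$ gives
\[
vF(u,v)=-(u+v)\,\mathfrak{Z}^{\mathcal{F}}_{\mathrm{ev}}(-u,v-u)+\tfrac12 vG(v)+\tfrac12 uG(u).
\]
Reading off coefficients then puts every $a_{l,m}$ in $\mathcal{DZ}_k^{(\mathcal{F})}$. The boundary case $l=1$ is harmless, since $a_{1,s}=0$ by definition.

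The main obstacle is this inversion, specifically pinning down $G$, i.e.\ the diagonal sums $\sum_{l}a_{l,k-l}$, modulo $\mathcal{DZ}_k^{(\mathcal{F})}$. I would first try \cref{lem:depth3-0-formula} together with the stuffle identity $0=\zeta_{\mathcal{F}}(1)\zeta_{\mathcal{F}}(r,s)$ and \eqref{eq:depth1StoZ}, which should show $\sum_l\zeta_{\mathcal{F}}(l-1,1,k-l)$ is a combination of products $Z_{\mathcal{F}}Z_{\mathcal{F}}$. Failing that, I would fall back on \eqref{eq:ZZ-formula-1} and \eqref{eq:ZZ-formula-2} to eliminate these sums directly.
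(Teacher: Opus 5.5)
Your proposal is correct and follows essentially the paper's route: the first inclusion comes from \cref{thm:DZ-SMZV} (or the definition) together with \cref{prop:ZZ-tiple}, and for the second you reduce to $\zeta_{\mathcal F}(l-1,1,m)$ and products and then invert the relation of \cref{thm:DZ-SMZV}. Your substitution $(u,v)=(-X,Y-X)$ is just the generating-function form of the paper's inversion of the unitriangular matrix $\bigl(\binom{b}{s}\bigr)$. The step you flag as the main obstacle is immediate: the second formula of \cref{lem:depth3-0-formula} gives $\sum_{l}\zeta_{\mathcal F}(l-1,1,k-l)=0$ outright, so $G$ is a pure product series lying in $\mathcal{DZ}_k^{(\mathcal F)}$ by \cref{lem:DZ-shuffle}, and the $Y=2X$ detour is unnecessary. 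There is also a harmless sign slip: the inverted identity should read $vF(u,v)=(u-v)\mathfrak{Z}^{\mathcal F}_{\mathrm{ev}}(-u,v-u)+\tfrac12 vG(v)+\tfrac12 uG(u)$.
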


\begin{proof}
The case $k=2$ is immediate. We assume that $k\geq4$.
By \cref{prop:ZZ-tiple}, \cref{thm:DZ-SMZV},
and the definition of $Z_{\mathcal A}(r,s)$, we have
\begin{align*}
\mathcal{TZ}^{\mathcal{F}}_k
\supset \Span_{\Q}\{\zeta_{\mathcal{F}}(r-1,1,k-r)-Z_{\mathcal{F}}(r)Z_{\mathcal{F}}(k-r)\mid 2\leq r\leq k-1\}
\supset \mathcal{DZ}_k^{(\mathcal{F})}.
\end{align*}
Conversely, we prove that $\mathcal{TZ}^{\mathcal{F}}_k\subset \mathcal{DZ}_k^{(\mathcal{F})}$. Using \cref{lem:ZS-dual-depth3,lem:sym-x-one-y}, we see that it suffices to show that $\mathcal{DZ}_k^{(\mathcal{F})}$ contains both $\zeta_{\mathcal{F}}(r,1,k-r-1)$ and $Z_{\mathcal{F}}(r)Z_{\mathcal{F}}(k-r)$ for $1\leq r\leq k-2$. By \cref{lem:DZ-shuffle}, the product $Z_{\mathcal{F}}(r)Z_{\mathcal{F}}(k-r)$ belongs to $\mathcal{DZ}_k^{(\mathcal{F})}$. Moreover, combining this with \cref{lem:depth3-0-formula}, \cref{thm:DZ-SMZV}, and the definition of $Z_{\mathcal{A}}(r,s)$, we obtain
\begin{align*}
\sum_{b=1}^{k-2}\binom{b}{s} \zeta_{\mathcal{F}}(k-b-1,1,b) \in \mathcal{DZ}_k^{(\mathcal{F})}
\end{align*}
for $1\leq s\leq k-2$. Since the matrix $(\binom{b}{s})_{1\leq b,s\leq k-2}$ is invertible,  $\mathcal{DZ}_k^{(\mathcal{F})}$ contains $\zeta_{\mathcal{F}}(r,1,k-r-1)$.
\end{proof}

By \cref{lem:DZ-shuffle}, $\mathcal{DZ}_k^{(\mathcal{F})}$ satisfies the shuffle product relation \cref{eq:depth-2-shuffle}. When $\mathcal{F}=S$, it naturally also satisfies the stuffle product relation. While this paper was being prepared, Zhao proved in \cite{Zha26} that the stuffle product relation also holds for $\mathcal{F}=\mathcal{A}$: $Z_{\mathcal{A}}(r)Z_{\mathcal{A}}(s) = Z_{\mathcal{A}}(r,s)+Z_{\mathcal{A}}(s,r)$.
Therefore, \cref{prop:harm1,prop:harm2} follow immediately from \cite[Theorem~1]{GKZ06}. Nevertheless, we retain these propositions for the record.

\begin{prop}\label{prop:harm1}
For even $k>0$ and $\mathcal{F}\in\{S,\mathcal{A}\}$, we have
$$2Z_{\mathcal{F}}(k-1,1)
= \sum_{k_1+k_2=k}Z_{\mathcal{F}}(k_1)Z_{\mathcal{F}}(k_2)
= -2Z_{\mathcal{F}}(1,k-1).$$
In particular, we have
$$Z_{\mathcal{F}}(k-1,1) + Z_{\mathcal{F}}(1,k-1) = 0 \ \ (=Z_{\mathcal{F}}(1)Z_{\mathcal{F}}(k-1)).$$
\end{prop}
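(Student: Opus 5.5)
Both equalities can be read off from \cref{thm:DZ-SMZV} (for $\mathcal{F}=S$) and from the definition \cref{eq:def-DZ-FMZV} (for $\mathcal{F}=\mathcal{A}$). In each case the right-hand side is the same expression in $\zeta_{\mathcal{F}}(l_1-1,1,l_2)$ and $Z_{\mathcal{F}}(l_1)Z_{\mathcal{F}}(l_2)$, so one argument covers both. Put $a_{l_1,l_2}\ceq\zeta_{\mathcal{F}}(l_1-1,1,l_2)-Z_{\mathcal{F}}(l_1)Z_{\mathcal{F}}(l_2)$ as in the proof of \cref{lem:DZ-shuffle}, with $a_{1,l_2}\ceq 0$.

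\emph{The value $Z_{\mathcal{F}}(k-1,1)$.} Take $(k_1,k_2)=(k-1,1)$. Then $\binom{l_2}{1}=l_2$ and $\binom{k-1}{1}=k-1$, so
$$Z_{\mathcal{F}}(k-1,1)=\sum_{\substack{l_1+l_2=k\\ l_1\geq 2}}\Bigl(\frac{k-1}{2}-l_2\Bigr)a_{l_1,l_2}.$$
To evaluate this I need two linear relations among the $a_{l_1,l_2}$. The first is $\sum_{l_1}a_{l_1,l_2}$: by the second formula of \cref{lem:depth3-0-formula} (and its $\mathcal{A}$-analogue) we have $\sum\zeta_{\mathcal{F}}(l_1-1,1,l_2)=0$, so $\sum a=-\sum_{l_1\geq2}Z(l_1)Z(l_2)$. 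The second is the weighted sum $\sum l_2\,a_{l_1,l_2}$. I plan to get it from \cref{eq:ZZ-formula-2} with $b=k-1$, where the sum has a single term $r=k-1$. This gives $Z(k-1)Z(1)=0$ on the left and a multiple of $\zeta_{\mathcal{F}}(k-2,1,1)$ and $\zeta_{\mathcal{F}}(k-2,1,1)$ on the right, so this is only a consistency check. The genuinely useful input is the shuffle relation \cref{lem:DZ-shuffle} with $r=1$, $s=k-1$:
$$0=Z(1)Z(k-1)=\sum_{i}\Bigl(1+\binom{i-1}{k-2}\Bigr)Z_{\mathcal{F}}(k-i,i),$$
that is, $\sum_{i=1}^{k-1}Z_{\mathcal{F}}(k-i,i)+Z_{\mathcal{F}}(1,k-1)=0$.

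\emph{The value $Z_{\mathcal{F}}(1,k-1)$.} Taking $(k_1,k_2)=(1,k-1)$ in \cref{thm:DZ-SMZV}, only $l_2\in\{k-1\}$ has a nonzero first binomial, while the second coefficient is $\frac12$. This gives
$$Z_{\mathcal{F}}(1,k-1)=-a_{1,k-1}+\frac12\sum_{l_1\geq 2}a_{l_1,l_2}=\frac12\sum_{l_1\geq 2}a_{l_1,l_2},$$
because $a_{1,k-1}=0$ (the range $l_1\geq 2$ excludes it). Using the first relation, this becomes $-\frac12\sum_{l_1\geq2}Z(l_1)Z(l_2)$. Since $Z(1)=0$, this equals $-\frac12\sum_{k_1+k_2=k}Z(k_1)Z(k_2)$, which proves the second equality.

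\emph{The first equality.} I will use the shuffle relation \cref{eq:depth-2-shuffle} with $r=k-1$, $s=1$:
$$Z(k-1)Z(1)=\sum_{i}\Bigl(\binom{i-1}{k-2}+1\Bigr)Z(k-i,i)=Z(1,k-1)+\sum_i Z(k-i,i)=0.$$
This only gives the total sum $\sum_i Z(k-i,i)=-Z(1,k-1)$, so the first equality needs a separate argument. Summing \cref{eq:depth-2-shuffle} over all $r+s=k$ gives $\sum_{r+s=k}Z(r)Z(s)=\sum_i c_i Z(k-i,i)$ with $c_i=2\sum_{r}\binom{i-1}{r-1}=2^{i}$. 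This is probably not enough by itself either. The direct route is to evaluate $Z(k-1,1)$ from the displayed formula, which requires $\sum l_2\,a_{l_1,l_2}$. I expect this weighted sum to be the main obstacle. My approach is to obtain it from the first formula of \cref{lem:depth3-0-formula} together with the stuffle relation $0=\zeta(1)\zeta(r,s)$, weighted by $s$, and reversal \cref{lem:anti-index}. Concretely, I would sum $0=\zeta(1,r,s)+\zeta(r,1,s)+\zeta(r,s,1)$ with weight $s$, and handle $\sum s\,\zeta(1,r,s)$ by reversal and \cref{lem:ZS-dual-depth3}. The aim is to show $\sum l_2\,a_{l_1,l_2}=\frac{k-1}{2}\sum a-\sum Z Z$, which gives $Z(k-1,1)=\frac12\sum Z(k_1)Z(k_2)$. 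The final assertion then follows by adding the two equalities.
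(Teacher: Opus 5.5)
Your proof of the second equality is correct and is exactly the paper's argument. Setting $X=0$ in the $F$-expression for $\mathfrak{Z}^{\mathcal F}_{\mathrm{ev}}$ is the same as reading off $Z_{\mathcal F}(1,k-1)=\frac12\sum_{l_1\ge2}a_{l_1,l_2}$ from \cref{thm:DZ-SMZV} and then applying the second formula of \cref{lem:depth3-0-formula}.

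\textbf{The gap is the first equality, which you do not prove.} You stop at a plan, and the target you announce is wrong. Write $P\ceq\sum_{k_1+k_2=k}Z_{\mathcal F}(k_1)Z_{\mathcal F}(k_2)$. Your own formula is $Z_{\mathcal F}(k-1,1)=\sum(\frac{k-1}{2}-l_2)a_{l_1,l_2}$, so the identity $\sum l_2a_{l_1,l_2}=\frac{k-1}{2}\sum a_{l_1,l_2}-P$ would give $Z_{\mathcal F}(k-1,1)=P$, not $\frac12P$. What is actually needed is $\sum l_2a_{l_1,l_2}=\frac{k-1}{2}\sum a_{l_1,l_2}-\frac12P=-\frac k2P$.

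The missing step is small, and there are two ways to supply it.

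\emph{The paper's route.} Substitute $Y=-X$ in $\mathfrak{P}^{\mathcal F}_{\mathrm{ev}}(X,Y)=\mathfrak{Z}^{\mathcal F}_{\mathrm{ev}}(X,X+Y)+\mathfrak{Z}^{\mathcal F}_{\mathrm{ev}}(Y,X+Y)$. On coefficients, this means multiplying \cref{eq:depth-2-shuffle} by $(-1)^{s-1}$ and summing over $r+s=k$, instead of the unweighted sum you tried (with coefficients $2^i$). Since $k$ is even, $(-1)^{s-1}=(-1)^{r-1}$. For $1\le i\le k-1$ we have $\sum_{j=0}^{k-2}(-1)^j\binom{i-1}{j}=\delta_{i,1}$, so the right-hand side collapses to $2Z_{\mathcal F}(k-1,1)$. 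On the left, only odd $r,s$ survive because $Z_{\mathcal F}$ vanishes at even arguments, so every sign is $+1$ and the left side is $P$.

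\emph{Repairing your direct route.} This closes in one line, without the weighted stuffle relation or \cref{lem:ZS-dual-depth3}. Since the weight is even, \cref{lem:anti-index} gives $\zeta_{\mathcal F}(r,1,s)=\zeta_{\mathcal F}(s,1,r)$. Hence $\sum_{r+s=k-1}s\,\zeta_{\mathcal F}(r,1,s)=\frac{k-1}{2}\sum_{r+s=k-1}\zeta_{\mathcal F}(r,1,s)=0$ by the second formula of \cref{lem:depth3-0-formula}. By symmetry, $\sum l_2Z_{\mathcal F}(l_1)Z_{\mathcal F}(l_2)=\frac k2P$. Therefore $\sum l_2a_{l_1,l_2}=-\frac k2P$, and $Z_{\mathcal F}(k-1,1)=-\frac{k-1}{2}P+\frac k2P=\frac12P$, as required.

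Your weighted-stuffle plan would also succeed, since reversal together with the first formula of \cref{lem:depth3-0-formula} gives $\sum_{r+s=k-1}\zeta_{\mathcal F}(1,r,s)=0$ for even $k$. It is a detour compared with either route above.
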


\begin{proof}
We use the notation and results from the proof of \cref{lem:DZ-shuffle}. Replacing $Y$ by $-X$ in
\begin{align*}
\mathfrak{P}^{\mathcal{F}}_{\mathrm{ev}}(X,Y)
&= \mathfrak{Z}^{\mathcal{F}}_{\mathrm{ev}}(X,X+Y) + \mathfrak{Z}^{\mathcal{F}}_{\mathrm{ev}}(Y,X+Y),
\end{align*}
we obtain
\begin{align*}
\mathfrak{P}^{\mathcal{F}}_{\text{ev}}(X,X) = \mathfrak{P}^{\mathcal{F}}_{\text{ev}}(X,-X) &= \mathfrak{Z}^{\mathcal{F}}_{\text{ev}}(X,0) + \mathfrak{Z}^{\mathcal{F}}_{\text{ev}}(-X,0)
= 2\mathfrak{Z}^{\mathcal{F}}_{\text{ev}}(X,0).
\end{align*}
This implies the first equality in the statement. Moreover, substituting $X=0$ in
\begin{align*}
\mathfrak{Z}^{\mathcal{F}}_{\text{ev}}(X,Y)
&= -\frac{1}{Y}\left((-X+Y)F(-X,-X+Y) - \frac{1}{2}(-X+Y)F(-X+Y,-X+Y) + \frac{1}{2}XF(-X,-X)\right),
\end{align*}
we obtain
\begin{align*}
\mathfrak{Z}^{\mathcal{F}}_{\text{ev}}(0,Y) 
&= \left(\frac{1}{2}F(Y,Y) - F(0,Y)\right)
= -\frac{1}{2}\mathfrak{P}^{\mathcal{F}}_{\text{ev}}(Y,Y).
\end{align*}
Here, the second equality follows from the definition of $F(X,Y)$ and \cref{lem:depth3-0-formula}. This implies the second equality in the statement. 
\end{proof}

\begin{prop}\label{prop:harm2}
For even $k>0$ and $\mathcal{F}\in\{S,\mathcal{A}\}$, we have
\begin{align*}
\sum_{r=1}^{k-2}Z_{\mathcal{F}}(r,k-r) = 0.
\end{align*}
\end{prop}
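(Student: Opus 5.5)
The plan is to work with the generating functions from the proof of \cref{lem:DZ-shuffle} and to specialize the shuffle-type identity
$$\mathfrak{P}^{\mathcal{F}}_{\mathrm{ev}}(X,Y) = \mathfrak{Z}^{\mathcal{F}}_{\mathrm{ev}}(X,X+Y) + \mathfrak{Z}^{\mathcal{F}}_{\mathrm{ev}}(Y,X+Y)$$
established there. Instead of $Y=-X$, as in the proof of \cref{prop:harm1}, I substitute $Y=0$. Since $Z_{\mathcal{F}}(1)=0$, every coefficient of $\mathfrak{P}^{\mathcal{F}}_{\mathrm{ev}}(X,0)$ is of the form $Z_{\mathcal{F}}(k-1)Z_{\mathcal{F}}(1)=0$. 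Hence $\mathfrak{P}^{\mathcal{F}}_{\mathrm{ev}}(X,0)=0$, and the substitution gives
$$\mathfrak{Z}^{\mathcal{F}}_{\mathrm{ev}}(X,X) = -\mathfrak{Z}^{\mathcal{F}}_{\mathrm{ev}}(0,X).$$

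Next I compare the coefficients of $X^{k-2}$ on both sides, for even $k$. On the left, the coefficient of $X^{r-1}Y^{s-1}$ with $r+s=k$ contributes, so the coefficient is $\sum_{r=1}^{k-1}Z_{\mathcal{F}}(r,k-r)$. On the right, only the term with $r=1$ survives, giving $-Z_{\mathcal{F}}(1,k-1)$. Therefore
$$\sum_{r=1}^{k-1}Z_{\mathcal{F}}(r,k-r) = -Z_{\mathcal{F}}(1,k-1).$$

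Finally I invoke the last statement of \cref{prop:harm1}, namely $Z_{\mathcal{F}}(1,k-1)=-Z_{\mathcal{F}}(k-1,1)$. This turns the right-hand side into $Z_{\mathcal{F}}(k-1,1)$. That is exactly the $r=k-1$ term of the left-hand sum, so removing it from both sides gives $\sum_{r=1}^{k-2}Z_{\mathcal{F}}(r,k-r)=0$.

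There is no real obstacle here. The only point to check is the bookkeeping: the series $\mathfrak{Z}^{\mathcal{F}}_{\mathrm{ev}}$ involves only indices of even weight, and the identity from \cref{lem:DZ-shuffle} holds for each fixed weight $k$, so setting $Y=0$ and reading off the degree-$(k-2)$ part is legitimate. The case $k=2$ is trivial, since the sum is then empty.
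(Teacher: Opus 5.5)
Your proof is correct and is essentially the paper's argument: set $Y=0$ in $\mathfrak{P}^{\mathcal{F}}_{\mathrm{ev}}(X,Y)=\mathfrak{Z}^{\mathcal{F}}_{\mathrm{ev}}(X,X+Y)+\mathfrak{Z}^{\mathcal{F}}_{\mathrm{ev}}(Y,X+Y)$, note that the left side vanishes because $Z_{\mathcal{F}}(1)=0$, and use \cref{prop:harm1} to trade $Z_{\mathcal{F}}(1,k-1)$ for $-Z_{\mathcal{F}}(k-1,1)$. The only cosmetic difference is that the paper applies \cref{prop:harm1} at the level of generating functions, writing $\mathfrak{Z}^{\mathcal{F}}_{\mathrm{ev}}(0,X)=-\mathfrak{Z}^{\mathcal{F}}_{\mathrm{ev}}(X,0)$, before reading off coefficients, whereas you apply it after extracting the coefficient of $X^{k-2}$.
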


\begin{proof}
We use the notation and results from the proof of \cref{lem:DZ-shuffle}. Substituting $Y=0$ into
\begin{align*}
\mathfrak{P}^{\mathcal{F}}_{\mathrm{ev}}(X,Y)
&= \mathfrak{Z}^{\mathcal{F}}_{\mathrm{ev}}(X,X+Y) + \mathfrak{Z}^{\mathcal{F}}_{\mathrm{ev}}(Y,X+Y)
\end{align*}
and using \cref{prop:harm1}, we obtain
\begin{align*}
0 = \mathfrak{P}^{\mathcal{F}}_{\mathrm{ev}}(X,0)
&= \mathfrak{Z}^{\mathcal{F}}_{\mathrm{ev}}(X,X) + \mathfrak{Z}^{\mathcal{F}}_{\mathrm{ev}}(0,X)
= \mathfrak{Z}^{\mathcal{F}}_{\mathrm{ev}}(X,X) - \mathfrak{Z}^{\mathcal{F}}_{\mathrm{ev}}(X,0).
\end{align*}
This implies the equation in the statement. 
\end{proof}

We denote by $\mathcal{DZ}_k$ the $\mathbb{Q}$-vector space spanned by depth-two MZVs of weight $k$. This space has been studied in \cite{BS13,GKZ06,Kan04}, and its relation to modular forms is well known. The dimension of $\mathcal{DZ}_k$ is conjectured to be as follows:
\begin{equation*}
\dim_{\Q}\mathcal{DZ}_k \overset{?}{=} \left[\frac{k+1}{2}\right]-1-\dim S_k(SL_2(\Z))
\end{equation*}
for even $k>2$. This conjecture has been proved only in one direction: the right-hand side gives an upper bound for $\dim_{\mathbb{Q}}\mathcal{DZ}_k$. This upper bound also gives an upper bound for
$\dim_{\mathbb{Q}}\mathcal{DZ}_k^{(S)}$. Indeed, since
$\zeta(k)\in\mathcal{DZ}_k$ is mapped to $Z_S(k)=0$ for even $k>2$, we have $\dim_{\mathbb Q}\mathcal{DZ}^{(S)}_k \leq \dim_{\mathbb Q}\mathcal{DZ}_k-1$. Therefore, for even $k>0$, we have
$$\dim_{\Q}\mathcal{TZ}_k^{S} = \dim_{\Q}\mathcal{DZ}_k^{(S)}
\leq \left[\frac{k+1}{2}\right]-2-\dim S_k(\mathrm{SL}_2(\Z)).$$
This upper bound is conjectured to be sharp.
One can also obtain an upper bound for $\dim_{\mathbb{Q}}\mathcal{DZ}_k^{(\mathcal{A})}$ equal to that for $\dim_{\mathbb{Q}}\mathcal{DZ}_k^{(S)}$. Indeed, the depth-two double shuffle relations
\begin{equation}\label{eq:depth-2-DSR}
\zeta(r,s)+\zeta(s,r)+\zeta(r+s) = \zeta(r)\zeta(s) = \sum_{i=1}^{k-1}\left(\binom{i-1}{r-1}+\binom{i-1}{s-1}\right)\zeta(k-i,i),
\end{equation}
together with the formula
\begin{equation}\label{eq:depth-2-cusp}
\frac{B_rB_s}{r!s!}\zeta(k) + \frac{2B_k}{k!}\zeta(r)\zeta(s) = 0
\end{equation}
for even $r,s>0$ with $k=r+s$, which follows from the evaluation
$$\zeta(k) = -\frac{1}{2}\frac{(2\pi i)^k}{k!}B_k \quad \text{for even }k>0,$$
yield the conjecturally sharp upper bound for $\dim_{\mathbb{Q}}\mathcal{DZ}_k$ (see \cite[Theorem~3]{GKZ06}).
In $\mathcal{DZ}_k^{(\mathcal{A})}$, \cref{eq:depth-2-DSR} follows from \cref{lem:DZ-shuffle} and the result of \cite{Zha26}, while \cref{eq:depth-2-cusp} follows from the fact that $Z_{\mathcal{A}}(r)=0$ for even $r$. Therefore, for even $k>2$, we obtain the upper bound
$$\dim_{\mathbb{Q}}\mathcal{TZ}_k^{\mathcal{A}}
= \dim_{\mathbb{Q}}\mathcal{DZ}_k^{(\mathcal{A})}
\leq \left[ \frac{k+1}{2}\right] -2-\dim S_k(\mathrm{SL}_2(\mathbb{Z})).$$

\appendix
\section{The two definitions of $Z_{\mathcal{A}}(r,s)$}\label{sec:def-DFMZV}

In this section, we investigate the equivalence of the two definitions of
$Z_{\mathcal{A}}(r,s)$. Kaneko--Zagier \cite{KZ} define $\widetilde{Z}_{\mathcal{A}}(r,s)$ as follows and conjecture that $Z_{\mathcal{A}}(r,s)$ coincides with $\til{Z}_{\mathcal{A}}(r,s)$ when $r+s$ is even.

\begin{definition}
For integers $a,b>0$ with $k=a+b>2$, we define
\begin{align*}
\til{Z}_{\mathcal{A}}(a,b) &\ceq \til{\mathbb{B}}(a,b)
-\sum_{i=1}^{b-2}(-1)^i\binom{a+i}{a}Z_{\mathcal{A}}(a+i)Z_{\mathcal{A}}(b-i) - Z_{\mathcal{A}}(k)
\\
&+ (-1)^a\binom{k-1}{a}(H_{b-1}-H_{k-1})Z_{\mathcal{A}}(k-1)
+ (-1)^{a}\binom{k-1}{a}\left(\gamma_{\mathcal{A}} Z_{\mathcal{A}}(k-1)-Z_{\mathcal{A}}'(k-1)\right).
\end{align*}
Here, $H_n$ denotes the $n$-th harmonic number $1+1/2+1/3+\cdots+1/n$ (and set $H_0=0$). We also define
\begin{align*}
\til{\mathbb{B}}(a,b) \ceq \left(\sum_{i=a-1}^{p-1-b}\binom{i+1}{a}\frac{B_{i+1-a}}{i+1}\frac{B_{p-i-b}}{p-i-b}\mod p\right)_{p},
\end{align*}
\begin{align*}
\gamma_{\mathcal{A}} \ceq (W_p\mod p)_{p},
\quad \text{and}\quad
Z_{\mathcal{A}}'(k) \ceq \left(\frac{1}{p}\left(-\frac{B_{p-k}}{p-k}+\frac{B_{2p-1-k}}{2p-1-k}\right)\mod p\right)_{p},
\end{align*}
where $W_p$ denotes the Wilson quotient, defined by
\begin{align*}
W_p \ceq \frac{(p-1)!+1}{p}\in \Z_{>0}.
\end{align*}
Furthermore, we set $\widetilde{Z}_{\mathcal A}(1,1)\ceq 0$.
\end{definition}

The following statement implies that $Z_{\mathcal A}(r,s)=\widetilde Z_{\mathcal A}(r,s)$ for any integers $r,s>0$ such that $r+s$ is even. Indeed, combining it with the stuffle product relation $Z_{\mathcal A}(r)Z_{\mathcal A}(s) = Z_{\mathcal A}(r,s)+Z_{\mathcal A}(s,r)$, proved in \cite{Zha26}, immediately yields the desired equality. This fact is also noted in \cite{Zha26}.

\begin{thm}\label{thm:appendix}
For integers $r,s>0$ such that $r+s$ is even, we have
$$Z_{\mathcal{A}}(r,s)+\widetilde{Z}_{\mathcal{A}}(s,r) = Z_{\mathcal{A}}(r)Z_{\mathcal{A}}(s).$$
\end{thm}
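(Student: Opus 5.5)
We prove the identity coefficientwise for each prime $p$, reducing both sides to explicit Bernoulli-number expressions modulo $p$. The term $Z_{\mathcal A}(r,s)$ comes from \cref{eq:def-DZ-FMZV}, and $\widetilde Z_{\mathcal A}(s,r)$ from its definition.

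\emph{Step 1: the small cases.} Since $r+s=k$ is even, $Z_{\mathcal A}(k)=0$. If $r,s$ are both even, the right-hand side is $0$; if both are odd, it is $Z_{\mathcal A}(r)Z_{\mathcal A}(s)$. The case $r=s=1$ follows from $\widetilde Z_{\mathcal A}(1,1)=0$, $Z_{\mathcal A}(1)=0$, and \cref{prop:harm1} with $k=2$. We may therefore assume $k\ge 4$.

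\emph{Step 2: evaluate $Z_{\mathcal A}(r,s)$.} In \cref{eq:def-DZ-FMZV} we need the FMZVs $\zeta_{\mathcal A}(l_1-1,1,l_2)$ of even weight $k$. We use the known evaluation of triple FMZVs of the form $\zeta_{\mathcal A}(a,1,b)$ in terms of the ``Bernoulli convolution'' $\widetilde{\mathbb B}$, the Wilson quotient, and the derivative-type term $Z'_{\mathcal A}(k-1)$. This is where $\gamma_{\mathcal A}$ and $Z'_{\mathcal A}$ enter. The reduction is by \cref{lem:ZS-dual-depth3} (FMZV version) together with \cref{eq:dual-to-1}, which rewrite $\zeta_{\mathcal A}(a,1,b)$ through the Hoffman duals $\zeta_{\mathcal A}((n_1,1,n_3)^\vee)$. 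Those duals are then computed directly from $\zeta_{<p}$, using Faulhaber's formula for $\sum n^{m}$ modulo $p^2$; this is what produces $B_{2p-1-k}$ and $W_p$. We expect the combination to be
\[
\zeta_{\mathcal A}(a,1,b)=\sum_{j}c_j\,\widetilde{\mathbb B}(j,k-j)+\sum (\text{products }Z_{\mathcal A}Z_{\mathcal A})+c\,(\gamma_{\mathcal A}Z_{\mathcal A}(k-1)-Z'_{\mathcal A}(k-1))+c'\,Z_{\mathcal A}(k-1),
\]
with explicit rational coefficients $c_j, c, c'$ involving harmonic numbers.

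\emph{Step 3: sum and compare.} Substitute the Step 2 expressions into \cref{eq:def-DZ-FMZV}. Then collect separately the coefficients of $\widetilde{\mathbb B}$, of the anomalous terms $\gamma_{\mathcal A}Z_{\mathcal A}(k-1)-Z'_{\mathcal A}(k-1)$, of $Z_{\mathcal A}(k-1)$, and of the products. Finally, compare with $Z_{\mathcal A}(r)Z_{\mathcal A}(s)-\widetilde Z_{\mathcal A}(s,r)$.

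For the $\widetilde{\mathbb B}$ part we want to use a reflection identity of the form $\widetilde{\mathbb B}(a,b)\equiv\pm\widetilde{\mathbb B}(b,a)+(\text{boundary terms})$. This would follow from the substitution $i\mapsto p-1-i$ in the defining sum together with Kummer-type congruences and $B_{p-m}$ symmetry. The anomalous terms $\gamma_{\mathcal A}$, $Z'_{\mathcal A}$ and the $H$-terms should cancel because $\binom{k-1}{s}$ and $\binom{k-1}{r}$ appear with opposite signs $(-1)^s=(-1)^r$ after the weighting in \cref{eq:def-DZ-FMZV}. The remaining binomial sums can be closed off with \cref{eq:Bernouill-rel} and $\sum_{r}\binom{k}{r}B_r=(-1)^kB_k$, as in the proof of \cref{prop:ZZ-tiple}.

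\emph{Main obstacle.} The hardest part is Step 2. We need a correct mod-$p$ evaluation of $\zeta_{\mathcal A}(a,1,b)$ that keeps track of the $p$-adic second-order terms producing $W_p$ and $B_{2p-1-k}$. The reflection of $\widetilde{\mathbb B}$ in Step 3 is a secondary obstacle. To get there, we would first try to express everything via the generating functions $F(X,Y)$ from the proof of \cref{lem:DZ-shuffle}, turning the comparison into a functional equation in $X,Y$.
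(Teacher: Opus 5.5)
Your global strategy is the right one: compute both sides prime by prime as explicit Bernoulli-number expressions and compare. Step~1 is also fine. But Steps~2--3 leave out exactly the ingredient the identity rests on, and they misplace where $\gamma_{\mathcal A}$ and $Z'_{\mathcal A}$ come from.

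\textbf{Where the anomalous terms cannot come from.} The mod-$p$ value of $\zeta_{<p}(r,1,s)$ is already explicit (\cref{lem:lem-A3}). It is a binomially weighted convolution of $B_iB_{p-k+1-i}$, with no $W_p$ and no $B_{2p-k}$. So however you compute $\zeta_{\mathcal A}(a,1,b)$, the FMZVs are not the source of the Wilson quotient. Your formula with undetermined $c_j,c,c'$ is therefore a placeholder, not a step. The detour through the Hoffman duals $\zeta_{\mathcal A}(\{1\}^{n_1-1},3,\{1\}^{n_3-1})$, which have depth $n_1+n_3-1$, computed ``directly via Faulhaber modulo $p^2$'', is neither feasible as stated nor needed.

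\textbf{The missing identity.} $\widetilde Z_{\mathcal A}(s,r)$ contains $(-1)^s\binom{k-1}{s}\bigl(\gamma_{\mathcal A}Z_{\mathcal A}(k-1)-Z'_{\mathcal A}(k-1)\bigr)$, while $Z_{\mathcal A}(r,s)$ and $Z_{\mathcal A}(r)Z_{\mathcal A}(s)$ are $W_p$-free. So the proof needs an identity evaluating $W_p\frac{B_{p-k+1}}{p-k+1}+\frac1p\bigl(\frac{B_{2p-k}}{2p-k}-\frac{B_{p-k+1}}{p-k+1}\bigr)$ as a Bernoulli convolution. In the paper this is \cref{lem:lem-A5}. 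It is obtained from Levin's convolution identities, Glaisher's congruence $(p-1)!\equiv pB_{p-1}-p \pmod{p^2}$ and von Staudt--Clausen.

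Your reflection of $\widetilde{\mathbb B}$ via $i\mapsto p-1-i$ and Kummer congruences cannot supply this. Reindexing and first-order Kummer congruences only produce rational combinations of Bernoulli convolutions. They never produce $W_p$ or the second-order quantity $\frac1p(\cdots)$; relating the two is exactly the nontrivial content of \cref{lem:lem-A5}.

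Your cancellation argument also does not work as stated. $(-1)^s=(-1)^r$ are equal signs, not opposite ones. What must match is the single coefficient $\binom{k-1}{s}$. On the $Z_{\mathcal A}(r,s)$ side it comes from the $-\frac12\binom{k-1}{s}$ in \cref{eq:def-DZ-FMZV}, and only once \cref{lem:lem-A5} is applied.

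\textbf{The chain that is needed.}
\begin{enumerate}
\item By \cref{lem:depth3-0-formula}, the $-\frac12\binom{k-1}{s}\zeta_{\mathcal A}$ part of the definition drops out.
\item \cref{lem:lem-A3}, beta-integral binomial sums and Miki's identity (this is \cref{lem:lem-A4}) turn $\sum_b\binom{b}{s}\zeta_{<p}(b,1,k-1-b)$ into a convolution weighted by $\binom{p-a}{s}-\frac12\binom{k-1}{s}$, plus an explicit multiple of $\frac{B_{p-k+1}}{p-k+1}$.
\item Kummer's congruence merges this with the products $Z_{\mathcal A}(b)Z_{\mathcal A}(k-b)$ into one sum of $\frac{B_a}{a}\frac{B_{2p-k-a}}{2p-k-a}$ over $1\le a\le p-2$, $a\ne p-k+1$.
\item \cref{lem:lem-A5} converts the constant-weight part into the $W_p$ and $Z'$ terms.
\end{enumerate}

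On the other side no reflection is needed. By \cref{eq:lem-A1} and Kummer,
$\widetilde{\mathbb B}(s,r)_{(p)}\equiv(-1)^s\sum_{a=1}^{p-k}\binom{p-a}{s}\frac{B_a}{a}\frac{B_{2p-k-a}}{2p-k-a}+\frac1s\frac{B_{p-k+1}}{p-k+1}$,
which is already in the same shape. After absorbing the product terms of the definition, the two explicit expressions agree except in the range $p-s\le a\le p-2$. There $\binom{p-a}{s}$ vanishes except at $a=p-s$, and that single term is $Z_{\mathcal A}(r)Z_{\mathcal A}(s)$.
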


In what follows, unless otherwise specified, \(\equiv\) denotes congruence modulo \(p\).

\begin{lem}
For integers $a\geq 0$, $b>0$, and $p>a+b$, we have
\begin{equation}\label{eq:lem-A1}
\binom{p-1-a}{b} \equiv (-1)^{b}\binom{a+b}{b} \mod p.
\end{equation}
Moreover, for integers $a,k>0$, $1\leq n\leq k-1$, and $p>k+a-1$, we have
\begin{equation}\label{eq:lem-A2}
\sum_{r=0}^{k-1}\binom{a+r-1}{a+n-1}\binom{p-k+r}{a+r} \equiv (-1)^{a+n} \mod p.
\end{equation}
\end{lem}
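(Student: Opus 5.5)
The plan is to prove \cref{eq:lem-A1} by direct expansion and then to derive \cref{eq:lem-A2} from it, which turns the second claim into a finite-difference identity over $\Z$.

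For \cref{eq:lem-A1}, write
$$\binom{p-1-a}{b}=\frac{1}{b!}\prod_{j=0}^{b-1}(p-1-a-j).$$
Since $p>a+b\ge b$, the factor $b!$ is invertible modulo $p$. Reducing each factor gives $p-1-a-j\equiv-(a+1+j)$, so the product is congruent to $(-1)^b(a+1)(a+2)\cdots(a+b)/b!=(-1)^b\binom{a+b}{b}$.

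For \cref{eq:lem-A2}, apply \cref{eq:lem-A1} with $A=k-1-r\ge 0$ and $B=a+r>0$. Then $p-1-A=p-k+r$, and the hypothesis $p>A+B=k+a-1$ is exactly what is given. This yields
$$\binom{p-k+r}{a+r}\equiv(-1)^{a+r}\binom{a+k-1}{a+r}.$$
Now set $N\ceq a+k-1$, $m\ceq a+n-1$ and $j\ceq a+r$. The sum becomes
$$\sum_{j=a}^{N}(-1)^{j}\binom{j-1}{m}\binom{N}{j},$$
an identity in $\Z$ with no further reduction needed.

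The key step is to extend this sum to all $j$ and use that the $N$-th finite difference kills polynomials of degree less than $N$. Regard $P(j)\ceq\binom{j-1}{m}=(j-1)(j-2)\cdots(j-m)/m!$ as a polynomial in $j$ of degree $m$. For $1\le j\le a-1$ we have $j-1\le a-2<m$ (since $n\ge1$), so $P(j)=0$, and these terms may be added freely. At $j=0$, $P(0)=\binom{-1}{m}=(-1)^m$. Since $n\le k-1$, $\deg P=m\le a+k-2<N$, hence $\sum_{j=0}^{N}(-1)^j\binom{N}{j}P(j)=0$. Therefore
$$\sum_{j=1}^{N}(-1)^j\binom{N}{j}P(j)=-P(0)=(-1)^{m+1}=(-1)^{a+n},$$
as claimed.

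The only delicate points are the bookkeeping ones: that the range conditions make $P$ vanish on $1\le j\le a-1$, that the value at $j=0$ uses the polynomial extension $\binom{-1}{m}=(-1)^m$, and that $\deg P<N$ holds precisely because $n\le k-1$.
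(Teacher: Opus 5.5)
Your proof is correct. It differs from the paper's only in how you evaluate the integer sum that remains after applying \cref{eq:lem-A1}. The reduction itself, taking $A=k-1-r$ and $B=a+r$ to obtain $(-1)^{a+r}\binom{a+k-1}{a+r}$, is exactly what the paper does.

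The paper first treats $n=k-1$ separately. It then writes $\binom{a+r-1}{a+n-1}=\frac{a+n}{a+r}\binom{a+r}{a+n}$ and applies $\binom{k+a-1}{a+r}\binom{a+r}{a+n}=\binom{k+a-1}{a+n}\binom{k-n-1}{r-n}$. The remaining alternating sum $\sum_{r}(-1)^r\frac{1}{a+n+r}\binom{k-n-1}{r}$ is evaluated as the beta integral $\int_0^1 x^{a+n-1}(1-x)^{k-n-1}\,dx$, so the prefactor $(a+n)\binom{k+a-1}{a+n}$ cancels exactly.

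You instead read the sum as an $N$-th finite difference of the polynomial $P(j)=\binom{j-1}{m}$, whose degree $m$ is less than $N$. You pad the range with vanishing terms and recover the answer from the single boundary value $P(0)=(-1)^m$.

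What each route buys:
\begin{itemize}
\item Your argument stays within integer polynomial identities. It needs no division by $a+r$, no analytic input, and no case split for $n=k-1$.
\item The paper's argument gives an explicit product evaluation. It also reuses the beta-function device that the paper needs again in the proof of \cref{lem:lem-A4}, where a similar alternating sum with one boundary term missing appears.
\end{itemize}

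Your bookkeeping checks out. $P$ vanishes on $1\le j\le a-1$ because $n\ge 1$, and $\deg P=a+n-1<N$ because $n\le k-1$.
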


\begin{proof}
Equation \cref{eq:lem-A1} is easily obtained as follows.
\begin{align*}
\binom{p-1-a}{b} &= \frac{(p-1-a)(p-2-a)\cdots(p-b-a)}{b(b-1)\cdots 2\cdot 1}
\equiv (-1)^b\frac{(a+1)(a+2)\cdots(a+b)}{b(b-1)\cdots 2\cdot 1}
= (-1)^b\binom{a+b}{b}.
\end{align*}
We prove Equation \cref{eq:lem-A2}. Since the case $n=k-1$ follows from \cref{eq:lem-A1}, we may assume that $n<k-1$. This assumption is needed to justify the identity obtained from the beta function. Using the identity
\begin{equation}\label{eq:binom-fund-id-3}
\binom{a}{k}\binom{k}{b} = \binom{a}{b}\binom{a-b}{k-b}\ ,\quad (a,b,k\geq 0),
\end{equation}
we have
\begin{align*}
\sum_{r=0}^{k-1}\binom{a+r-1}{a+n-1}\binom{p-k+r}{a+r}
&\overset{\cref{eq:lem-A1}}{\equiv} \sum_{r=0}^{k-1}(-1)^{a+r}\binom{a+r-1}{a+n-1}\binom{k+a-1}{a+r}
\\
&= \sum_{r=0}^{k-1}(-1)^{a+r}\frac{a+n}{a+r}\binom{a+r}{a+n}\binom{k+a-1}{a+r}
\\
&\overset{\cref{eq:binom-fund-id-3}}{=} \sum_{r=n}^{k-1}(-1)^{a+r}\frac{a+n}{a+r}\binom{k+a-1}{a+n}\binom{k-n-1}{r-n}
\\
&= (a+n)\binom{k+a-1}{a+n}\sum_{r=0}^{k-1-n}(-1)^{a+r+n}\frac{1}{a+r+n}\binom{k-n-1}{r}
\end{align*}
Furthermore, by the well-known formula for the beta function, we have
\begin{align*}
\sum_{r=0}^{k-1-n}(-1)^{r}\frac{1}{a+r+n}\binom{k-n-1}{r}
&= \int_0^1 x^{a+n-1}(1-x)^{k-n-1}dx
\\
&= \frac{(a+n-1)!(k-n-1)!}{(k+a-1)!} = \frac{1}{a+n}\binom{k+a-1}{a+n}^{-1}.
\end{align*}
Combining these equations, we obtain the desired formula.
\end{proof}

\begin{lem}[Proposition 3 in \cite{KZ}]\label{lem:lem-A3}
For integers $r,s>0$ such that $r+s+1$ is even, we have
\begin{align*}
\zeta_{<p}(r,1,s) \equiv -\sum_{a=r-1}^{p-s-1} \frac{B_{a+1-r}}{a+1}\binom{a+1}{r}\frac{B_{p-s-a-1}}{p-s}\binom{p-s}{a+1} \mod p.
\end{align*}
\end{lem}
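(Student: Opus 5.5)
The plan is a direct computation modulo $p$, following the method of \cite{KZ}: replace inverse powers by ordinary powers using Fermat's little theorem, evaluate the resulting power sums with Faulhaber's formula, and then sum over the middle variable using the orthogonality of power sums over $\mathbb{F}_p^\times$.

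Throughout we take $p$ large, say $p>r+s+2$, since finitely many primes do not matter in $\mathcal{A}$. We write
$$\zeta_{<p}(r,1,s)=\sum_{0<n<p}\frac1n\Bigl(\sum_{0<m<n}\frac1{m^r}\Bigr)\Bigl(\sum_{n<l<p}\frac1{l^s}\Bigr).$$
The steps are as follows.

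\begin{enumerate}
\item \emph{Right sum.} Substitute $l\mapsto p-l$. Then
$$\sum_{n<l<p}l^{-s}\equiv(-1)^s\sum_{0<l<p-n}l^{-s}\equiv(-1)^s\sum_{0<l<p-n}l^{\,p-1-s}.$$

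\item \emph{Left sum.} Similarly, $\sum_{0<m<n}m^{-r}\equiv\sum_{0<m<n}m^{\,p-1-r}$.

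\item \emph{Faulhaber.} Apply
$$\sum_{0<m<N}m^{e}=\frac1{e+1}\sum_{j=0}^{e}\binom{e+1}{j}B_j\,N^{e+1-j}$$
to both sums. The first becomes a polynomial in $n$ with coefficients $\frac1{p-r}\binom{p-r}{j}B_j$. The second becomes a polynomial in $p-n\equiv -n$ with coefficients $\frac1{p-s}\binom{p-s}{i}B_i$. For $p$ large these coefficients are $p$-integral, since every $B_j$ that occurs has $j\le p-2$, so von Staudt--Clausen gives no factor $p$ in its denominator.

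\item \emph{Sum over $n$.} Multiply by $n^{-1}\equiv n^{p-2}$ and sum over $0<n<p$, using
$$\sum_{0<n<p}n^{e}\equiv\begin{cases}-1,&(p-1)\mid e,\ e>0,\\[2pt] 0,&\text{otherwise.}\end{cases}$$
The degrees in $n$ coming from the two factors are $p-r-j$ and $p-s-i$. Together with the extra $p-2$, the total exponent lies in a window where the only surviving multiple of $p-1$ is $2(p-1)$. This forces the single linear condition $i+j=p-r-s$, leaving a single sum over one Bernoulli index.
\end{enumerate}

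The final step is to reindex. Put $a+1-r=j$, so that the remaining index is $i=p-s-a-1$. Then rewrite $\frac1{p-r}\binom{p-r}{j}$ as a multiple of $\binom{a+1}{r}/(a+1)$, using \cref{eq:lem-A1} to reduce binomials of the form $\binom{p-1-x}{y}$ modulo $p$ and the identity \cref{eq:binom-fund-id-3} to move factors between binomials. The target is to recover exactly
$$-\frac{B_{a+1-r}}{a+1}\binom{a+1}{r}\,\frac{B_{p-s-a-1}}{p-s}\binom{p-s}{a+1},$$
with the global sign coming from $(-1)^s$, the sign $(-1)^{p-s-i}$ produced by $p-n\equiv-n$, and the $-1$ from the power-sum evaluation.

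The main obstacle is this bookkeeping. One has to check that no other exponent in the window is divisible by $p-1$. One also has to handle the boundary terms: the $B_1$ term, and the terms where $j$ or $i$ equals $0$. It is here that the hypothesis that $r+s+1$ is even enters. Since $r+s$ is odd, the sign $(-1)^{r+s}$ combines with the vanishing of odd Bernoulli numbers $B_{2t+1}=0$ for $t\ge1$, which should make the stray boundary contributions cancel or vanish. I would first do a small case, say $r=1$, $s=2$, by hand to pin down the signs, and only then carry out the general reindexing.
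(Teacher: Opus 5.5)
Your proposal is correct. The paper gives no proof of this lemma; it quotes it from \cite{KZ}, so there is no internal argument to compare against. Your route is a natural direct verification: Fermat's little theorem, Faulhaber's formula for both inner sums, then orthogonality of power sums in the middle variable. The window argument is also right: the total exponent lies in $[p,3p-2-r-s]$, which contains $2(p-1)$ but not $3(p-1)$, since $r+s\ge 2$.

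The bookkeeping you flag as the main obstacle is short, and there are no boundary cases. The Faulhaber expansion is exact; it already contains the $B_1$ and $j=0$ terms. The diagonal $i+j=p-r-s$ automatically lies inside both index ranges, because $r,s\ge1$. For odd $p$, your three signs multiply to $(-1)^s(-1)^{p-s-i}(-1)=(-1)^i$, so
\begin{equation*}
\zeta_{<p}(r,1,s)\equiv\frac{1}{p-s}\sum_{i+j=p-r-s}(-1)^i\,\frac{1}{p-r}\binom{p-r}{j}\binom{p-s}{i}B_iB_j \pmod p .
\end{equation*}
By \cref{eq:lem-A1},
\begin{equation*}
\frac{1}{p-r}\binom{p-r}{j}=\frac1j\binom{p-1-r}{j-1}\equiv(-1)^{j-1}\frac{1}{j+r}\binom{j+r}{r}.
\end{equation*}
This also holds for $j=0$, since $\frac{1}{p-r}\equiv-\frac1r$.

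Hence every term carries the same sign $(-1)^{i+j-1}=(-1)^{p-r-s-1}=-1$, because $p-r-s$ is even exactly when $r+s$ is odd. Put $a=j+r-1$ and use $\binom{p-s}{i}=\binom{p-s}{a+1}$; this gives the stated formula.

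So the parity hypothesis enters only through this uniform sign. For $r+s$ even, the same computation yields the formula with the opposite sign. The vanishing of odd Bernoulli numbers is never needed.

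Note also that the overall minus sign comes from the binomial reduction via \cref{eq:lem-A1}. It does not come from the three signs you listed, whose product is $(-1)^i$.
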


\begin{lem}\label{lem:lem-A4}
For an even integer $k>2$, an integer $1\leq n\leq k-1$, and a sufficiently large prime $p$, we have
\begin{align*}
\sum_{r=1}^{k-2}\binom{r}{n}\zeta_{<p}(r,1,k-1-r)
\equiv - &\sum_{a=1}^{p-k} \frac{B_{a}}{a}\frac{B_{p-k+1-a}}{p-k+1-a}
\left(\binom{p-a}{n}-\frac{1}{2}\binom{k-1}{n}\right)
\\
& -\frac{B_{p-k+1}}{p-k+1}\left(\frac{(-1)^{n}}{n} + (H_{k-1}-H_{k-n-1})\binom{k-1}{n}\right) \mod p ,
\end{align*}
\end{lem}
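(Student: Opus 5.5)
Plan: I will substitute \cref{lem:lem-A3} termwise into the left-hand side, interchange the order of summation, and collapse the binomial sums using \cref{eq:lem-A1} and \cref{eq:lem-A2}.

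\textbf{Step 1: substitute and reorganize.} Since $k$ is even, $r+(k-1-r)+1=k$ is even, so \cref{lem:lem-A3} applies with $s=k-1-r$ and gives
\begin{align*}
\sum_{r=1}^{k-2}\binom{r}{n}\zeta_{<p}(r,1,k-1-r)\equiv-\sum_{r=1}^{k-2}\binom{r}{n}\sum_{a=r-1}^{p-k+r}\frac{B_{a+1-r}}{a+1}\binom{a+1}{r}\frac{B_{p-k+r-a}}{p-k+1+r}\binom{p-k+1+r}{a+1}.
\end{align*}
Set $m=a+1-r$, so that $m+(p-k+r-a)=p-k+1$. The two Bernoulli indices are then $m$ and $p-k+1-m$, and these no longer depend on $r$. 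I will therefore write the whole expression as $\sum_m B_mB_{p-k+1-m}\,c_m$, where $c_m$ is an explicit finite sum over $r$ of binomial coefficients.

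\textbf{Step 2: classify the values of $m$.} Because odd Bernoulli numbers beyond $B_1$ vanish and $p-k+1$ is even, only $m$ even or $m\in\{0,1,p-k\}$ survive.
\begin{itemize}
\item The boundary values $m=0$ and $m=p-k+1$ each contain a $B_0=1$. They should produce the isolated term $-\frac{B_{p-k+1}}{p-k+1}\bigl(\frac{(-1)^n}{n}+(H_{k-1}-H_{k-n-1})\binom{k-1}{n}\bigr)$.
\item The generic values $1\le m\le p-k$ should give $\frac{1}{m(p-k+1-m)}$ times $\bigl(\binom{p-m}{n}-\frac12\binom{k-1}{n}\bigr)$.
\end{itemize}

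\textbf{Step 3: evaluate the generic coefficient.} For generic $m$ I will simplify $\frac{1}{a+1}\binom{a+1}{r}\frac{1}{p-k+1+r}\binom{p-k+1+r}{a+1}$ using \cref{eq:binom-fund-id-3}. The sum over $r$ weighted by $\binom{r}{n}$ should then reduce, after reducing modulo $p$ with \cref{eq:lem-A1}, to a sum of the shape in \cref{eq:lem-A2}. That sum evaluates to a sign and so collapses.

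The $-\frac12\binom{k-1}{n}$ correction is not expected to come out of this calculation directly. I expect it to arise from symmetrizing under $m\leftrightarrow p-k+1-m$, since the summand $\frac{B_mB_{p-k+1-m}}{m(p-k+1-m)}$ is symmetric. Concretely, I will pair the terms $m$ and $p-k+1-m$ and use the identity
\[
\binom{p-m}{n}+\binom{k-1+m-1}{n}\equiv\binom{p-m}{n}+(-1)^n\binom{p-k+1-m\ldots}{n},
\]
obtained by the reflection \cref{eq:lem-A1}. The goal is to rewrite the paired coefficient in the stated symmetric form. If needed, I will add half of the vanishing quantity $\binom{k-1}{n}\sum_m \frac{B_mB_{p-k+1-m}}{m(p-k+1-m)}$, which should be $0\bmod p$ by a Kummer-type or convolution identity for Bernoulli numbers.

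\textbf{Step 4: the boundary terms, which I expect to be the main obstacle.} At $m=0$ and $m=p-k+1$ the factor $\frac{1}{p-k+1+r}$ paired with binomials of size about $p$ produces $p$-adic cancellations. The congruence reductions must therefore be done carefully, keeping a factor $p$ in the numerator against one in the denominator. I expect the harmonic numbers $H_{k-1}-H_{k-n-1}$ to come from $\sum_j \frac1j$ over $k-n\le j\le k-1$, arising from expanding $\binom{p-1-a}{b}$ to first order in $p$. The $\frac{(-1)^n}{n}$ term should come from the endpoint $r=n$. I will compute this endpoint contribution separately via the beta-integral identity used in the proof of \cref{eq:lem-A2}.
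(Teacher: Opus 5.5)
Your Steps 1 and 2 match the paper: substitute \cref{lem:lem-A3}, shift $m=a+1-r$, and split off $m=0$ and $m=N$, where $N\ceq p-k+1$. Step 3, however, has a genuine gap. The $r$-sum you want to collapse runs over $1\le r\le k-2$, while \cref{eq:lem-A2} is a sum over $0\le r\le k-1$. The $r=0$ term vanishes, but the missing $r=k-1$ term does not. After reducing with \cref{eq:lem-A1}, the coefficient of $B_mB_{N-m}$ (for even $m$, $1\le m\le p-k$) is
\[
-\frac{1}{m(N-m)}\left(\binom{p-m}{n}-\binom{k-1}{n}\binom{p-m}{k-1}\right),
\]
not $-\frac{1}{m(N-m)}\bigl(\binom{p-m}{n}-\frac12\binom{k-1}{n}\bigr)$.

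Symmetrizing under $m\leftrightarrow N-m$ cannot replace $\binom{p-m}{k-1}$ by $\frac12$. Modulo $p$, the symmetrized value is $\frac12\binom{m+k-2}{k-2}$, which is a nonconstant polynomial in $m$. Your fallback also fails: $\sum_{m=1}^{p-k}\frac{B_m}{m}\frac{B_{N-m}}{N-m}$ is not $0\bmod p$. For $k=4$, $p=13$ it equals $-23/30240$. The missing ingredient is Miki's identity. Combined with $\frac1m\binom{p-m}{k-1}\equiv(-1)^{m+k-1}\frac{1}{k-1}\binom{N}{m}$, it gives
\[
\sum_{m=1}^{p-k}\frac{B_m}{m}\frac{B_{N-m}}{N-m}\binom{p-m}{k-1}\equiv\frac12\sum_{m=1}^{p-k}\frac{B_m}{m}\frac{B_{N-m}}{N-m}-H_N\frac{B_N}{N}\pmod p,
\]
and this is what produces the $-\frac12\binom{k-1}{n}$.

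This also breaks your accounting in Step 4. There is no $p$-adic cancellation at $m=0$ or $m=N$:
\begin{itemize}
\item $p-k+1+r$ is prime to $p$;
\item every binomial coefficient has top entry $<p$;
\item $B_N$ is $p$-integral.
\end{itemize}
So \cref{eq:lem-A1} applies directly. Summing over $r$ with the beta integral and $H_j=\sum_{i=1}^{j}(-1)^{i-1}\frac1i\binom{j}{i}$, the boundary terms contribute only
\[
-\frac{B_N}{N}\left(\frac{(-1)^n}{n}+\binom{k-1}{n}\left(\frac{1}{k-1}-H_{k-n-1}\right)\right).
\]
The remaining $\binom{k-1}{n}H_{k-2}$ needed to reach $H_{k-1}$ is the residue $-H_N\frac{B_N}{N}$ from Miki's identity, using $H_N\equiv H_{k-2}\pmod p$. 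No first-order expansion in $p$ produces it.

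Likewise, the term $\frac{(-1)^n}{n}$ comes from the whole $\frac1r$-sum via the beta integral, not from the endpoint $r=n$. Finally, the beta-integral steps require $n\le k-2$. The case $n=k-1$, where the left-hand side is $0$, must be checked separately, again using the Miki congruence.
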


\begin{proof}
We first assume that $1\leq n\leq k-2$ in order to apply the beta-function identity below. By \cref{lem:lem-A3} and the identity
\begin{equation}\label{eq:binom-fund-id-1}
\frac{1}{a+1}\binom{a+1}{b+1} = \frac{1}{b+1}\binom{a}{b}\ ,\quad (a,b\geq 0),
\end{equation}
for integers $r,s>0$ with $k=r+s+1$ and a sufficiently large prime $p$, we have
\begin{align*}
\zeta_{<p}(r,1,s) &\equiv -\sum_{a=r-1}^{p-s-1} \frac{B_{a+1-r}}{a+1}\binom{a+1}{r}\frac{B_{p-s-a-1}}{p-s}\binom{p-s}{a+1}
\\
&= -\sum_{a=0}^{p-r-s} \frac{B_{a}}{a+r}\binom{a+r}{r}\frac{B_{p-r-s-a}}{p-s}\binom{p-s}{a+r}\quad (\because a\mapsto a+r-1)
\\
&\overset{\cref{eq:binom-fund-id-1}}{=}
-\sum_{a=1}^{p-k} \frac{B_{a}}{a}\binom{a+r-1}{a-1}\frac{B_{p-r-s-a}}{p-r-s-a}\binom{p-s-1}{p-k-a}
-\left(\frac{1}{r}+\frac{1}{p-s}\right)\frac{B_{p-r-s}}{p-r-s}\binom{p-s-1}{p-r-s-1}
\\
&\overset{\cref{eq:lem-A1}}{\equiv}
-\sum_{a=1}^{p-k} \frac{B_{a}}{a}\binom{a+r-1}{r}\frac{B_{p-k+1-a}}{p-k+1-a}\binom{p-k+r}{r+a}
-(-1)^{r}\left(\frac{1}{r}-\frac{1}{k-r-1}\right)\frac{B_{p-k+1}}{p-k+1}\binom{k-1}{r}.
\end{align*}
Here, we separate the last line of the above equation into two parts, corresponding to the latter and former terms. By using the identity
\begin{equation}\label{eq:binom-fund-id-2}
\binom{r}{n}\binom{a+r-1}{r} = \binom{a+n-1}{n}\binom{a+r-1}{a+n-1},
\end{equation}
we have
\begin{align*}
-\sum_{r=1}^{k-2}&\binom{r}{n}\sum_{a=1}^{p-k} \frac{B_{a}}{a}\binom{a+r-1}{r}\frac{B_{p-k+1-a}}{p-k+1-a}\binom{p-k+r}{r+a}
\\
&\overset{\cref{eq:binom-fund-id-2}}{=} -\sum_{a=1}^{p-k} \frac{B_{a}}{a}\binom{a+n-1}{n}\frac{B_{p-k+1-a}}{p-k+1-a}\sum_{r=0}^{k-2}\binom{a+r-1}{a+n-1}\binom{p-k+r}{r+a}
\\
&\overset{\cref{eq:lem-A2}}{\equiv} -\sum_{a=1}^{p-k} \frac{B_{a}}{a}\binom{a+n-1}{n}\frac{B_{p-k+1-a}}{p-k+1-a}\left((-1)^{a+n}-\binom{a+k-2}{a+n-1}\binom{p-1}{k-1+a}\right)
\\
&\overset{\cref{eq:lem-A1}}{\equiv} -\sum_{a=1}^{p-k} \frac{B_{a}}{a}\binom{a+n-1}{n}\frac{B_{p-k+1-a}}{p-k+1-a}\left((-1)^{a+n}+(-1)^{a+k}\binom{a+k-2}{a+n-1}\right)
\\
&\overset{\cref{eq:binom-fund-id-2}}{=} -\sum_{a=1}^{p-k} \frac{B_{a}}{a}\frac{B_{p-k+1-a}}{p-k+1-a}\left((-1)^{a+n}\binom{a+n-1}{n}+(-1)^{a+k}\binom{k-1}{n}\binom{a+k-2}{k-1}\right).
\\
&\overset{\cref{eq:lem-A1}}{\equiv} -\sum_{a=1}^{p-k} \frac{B_{a}}{a}\frac{B_{p-k+1-a}}{p-k+1-a}
\left((-1)^{a}\binom{p-a}{n}-(-1)^{a}\binom{k-1}{n}\binom{p-a}{k-1}\right).
\end{align*}
Since all terms with odd $a$ vanish, we may omit the factor $(-1)^a$.
On the other hand, we have
\begin{align*}
-\sum_{r=1}^{k-2}&\binom{r}{n}(-1)^{r}\left(\frac{1}{r}-\frac{1}{k-r-1}\right)\frac{B_{p-k+1}}{p-k+1}\binom{k-1}{r}
\\
&\overset{\cref{eq:binom-fund-id-3}}{=} -\frac{B_{p-k+1}}{p-k+1}\binom{k-1}{n}\sum_{r=n}^{k-2}(-1)^{r}\left(\frac{1}{r}-\frac{1}{k-r-1}\right)\binom{k-n-1}{r-n}
\\
&= -\frac{B_{p-k+1}}{p-k+1}\binom{k-1}{n}\sum_{r=0}^{k-2-n}(-1)^{r+n}\left(\frac{1}{r+n}-\frac{1}{k-n-r-1}\right)\binom{k-n-1}{r}
\quad (\because r\mapsto r+n).
\end{align*}
Here, we have
\begin{align*}
\sum_{r=0}^{k-2-n}(-1)^{r}\frac{1}{r+n}\binom{k-n-1}{r}
&= \int_{0}^{1}x^{n-1}(1-x)^{k-n-1}dx - (-1)^{k-n-1}\frac{1}{k-1}
\\
&= \frac{(n-1)!(k-n-1)!}{(k-1)!} - (-1)^{k-n-1}\frac{1}{k-1},
\end{align*}
where the last equality follows from the well-known formula for the beta function. Moreover, using the well-known formula
$$H_n = \sum_{i=1}^{n}(-1)^{i-1}\frac{1}{i}\binom{n}{i}$$
for harmonic numbers, we obtain
\begin{align*}
\sum_{r=0}^{k-2-n}(-1)^{r}\frac{1}{k-n-1-r}\binom{k-n-1}{r}
&= \sum_{r=1}^{k-n-1}(-1)^{k-n-1-r}\frac{1}{r}\binom{k-n-1}{r} \quad (\because r\mapsto k-n-1-r)
\\
&= (-1)^{k-n}H_{k-n-1}.
\end{align*}
Thus, we have
\begin{align*}
-\sum_{r=1}^{k-2}&\binom{r}{n}(-1)^{r}\left(\frac{1}{r}-\frac{1}{k-r-1}\right)\frac{B_{p-k+1}}{p-k+1}\binom{k-1}{r}
\\
&= -\frac{B_{p-k+1}}{p-k+1}\left(\frac{(-1)^{n}}{n} + \frac{(-1)^{k}}{k-1}\binom{k-1}{n} - (-1)^{k}\binom{k-1}{n}H_{k-n-1}\right)
\end{align*}
Combining the formulas obtained in the above discussion, for an even integer $k$, an integer $1\leq n\leq k-2$, and a sufficiently large prime $p$, we obtain
\begin{align*}
\sum_{r=1}^{k-2}\binom{r}{n}\zeta_{<p}(r,1,k-1-r)
\equiv - &\sum_{a=1}^{p-k} \frac{B_{a}}{a}\frac{B_{p-k+1-a}}{p-k+1-a}
\left(\binom{p-a}{n}-\binom{k-1}{n}\binom{p-a}{k-1}\right)
\\
& -\frac{B_{p-k+1}}{p-k+1}\left(\frac{(-1)^{n}}{n} + \frac{1}{k-1}\binom{k-1}{n} - \binom{k-1}{n}H_{k-n-1}\right) \mod p .
\end{align*}
Using Miki's identity \cite{Mik78}:
\begin{align*}
\frac{n}{2}\sum_{j=2}^{n-2}\frac{B_{j}}{j}\frac{B_{n-j}}{n-j} = \sum_{j=2}^{n-2}\binom{n}{j}B_{j}\frac{B_{n-j}}{n-j} + H_nB_n \ , \quad (n\geq 4)
\end{align*}
and the congruence
$$\frac{1}{a}\binom{p-a}{k-1} \equiv (-1)^{a+k-1}\frac{1}{k-1}\binom{p-k+1}{a} \mod p,$$
we have
\begin{align}
\sum_{a=1}^{p-k} \frac{B_{a}}{a}\frac{B_{p-k+1-a}}{p-k+1-a}\binom{p-a}{k-1}
&\equiv -\frac{1}{k-1}\sum_{a=1}^{p-k} B_{a}\frac{B_{p-k+1-a}}{p-k+1-a}\binom{p-k+1}{a} \notag
\\
&\equiv \frac{1}{2}\sum_{a=1}^{p-k} \frac{B_{a}}{a}\frac{B_{p-k+1-a}}{p-k+1-a}
- H_{p-k+1}\frac{B_{p-k+1}}{p-k+1}. \label{eq:from-miki-identity}
\end{align}
Hence, we have
\begin{align*}
\sum_{r=1}^{k-2}\binom{r}{n}\zeta_{<p}(r,1,k-1-r)
\equiv - &\sum_{a=1}^{p-k} \frac{B_{a}}{a}\frac{B_{p-k+1-a}}{p-k+1-a}
\left(\binom{p-a}{n}-\frac{1}{2}\binom{k-1}{n}\right)
\\
& -\frac{B_{p-k+1}}{p-k+1}\left(\frac{(-1)^{n}}{n} + (H_{k-1}-H_{k-n-1})\binom{k-1}{n}\right) \mod p ,
\end{align*}
where we use the congruence $H_{p-k}\equiv H_{k-1}\mod p$. This gives the desired formula for $1\leq n\leq k-2$. It remains to consider the case $n=k-1$. In this case, the left-hand side in the statement is zero. On the other hand, by \cref{eq:from-miki-identity}, we have
\begin{align*}
-\sum_{a=1}^{p-k}\frac{B_a}{a}\frac{B_{p-k+1-a}}{p-k+1-a}
&\left(\binom{p-a}{k-1}-\frac12\right)
-\frac{B_{p-k+1}}{p-k+1}\left(H_{k-1}-\frac{1}{k-1}\right)
\\
&\equiv \frac{B_{p-k+1}}{p-k+1}\left(H_{p-k+1}-H_{k-2}\right)
\equiv 0 \pmod p,
\end{align*}
where we used
$H_{p-k+1}\equiv H_{k-2}\mod p$.
Thus, the formula also holds for $n=k-1$.
\end{proof}

\begin{lem}\label{lem:lem-A5}
For an even integer $k>2$ and a sufficiently large prime $p$, we have
\begin{align*}
\frac{1}{2}\sum_{\substack{i=2\\i\neq p-k+1}}^{p-2}\frac{B_{i}}{i}\frac{B_{2p-k-i}}{2p-k-i}
&\equiv W_p\frac{B_{p-k+1}}{p-k+1}
+ \frac{1}{p}\left(\frac{B_{2p-k}}{2p-k}-\frac{B_{p-k+1}}{p-k+1}\right)\mod p . 
\end{align*}
\end{lem}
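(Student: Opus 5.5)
The plan is to derive the congruence from Miki's identity with $n\ceq 2p-k$. This is the same identity already used for \cref{eq:from-miki-identity}, now applied at an index larger than $p$. Since $k$ is even, $n$ is even and $n\ge 4$ for large $p$, so
$$\frac{n}{2}\sum_{j=2}^{n-2}\frac{B_j}{j}\frac{B_{n-j}}{n-j}=\sum_{j=2}^{n-2}\binom{n}{j}B_j\frac{B_{n-j}}{n-j}+H_nB_n .$$
Every quantity here must be reduced to $\Z_{(p)}$ modulo $p$. Three kinds of terms are not automatically $p$-integral:
\begin{itemize}
\item terms containing $B_{p-1}$, where von Staudt--Clausen gives $pB_{p-1}\equiv -1 \pmod p$;
\item terms whose binomial coefficient $\binom{n}{j}$ is divisible by $p$;
\item $H_n$, which contains the summand $1/p$.
\end{itemize}
We track these to order $p^{-1}$, and in a few places to order $p^{0}$ inside a quantity of size $p^{-1}$.

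First we organize the left-hand side. The summation range $2\le j\le n-2$ is symmetric under $j\mapsto n-j$. It contains exactly two singular indices, $j=p-1$ and $j=p-k+1$, which are exchanged by this symmetry. All other terms satisfy $j\le p-2$ or $n-j\le p-2$. So by symmetry the left-hand side equals $n$ times the sum over $2\le i\le p-2$ with $i\ne p-k+1$, plus the singular contribution. The prefactor $n/2\equiv -k/2$ is harmless.

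Next we handle the right-hand side with Lucas' theorem and the Kummer congruences. Write $n=p+(p-k)$. For $j<p$ we have $\binom{n}{j}\equiv\binom{p-k}{j}$, which vanishes for $p-k<j<p$. For $j\ge p$ we have $\binom{n}{j}\equiv\binom{p-k}{j-p}$. Wherever a Bernoulli index exceeds $p-1$, the Kummer congruence $B_{m+p-1}/(m+p-1)\equiv B_m/m$ (for $m\not\equiv 0 \bmod p-1$) brings the index back below $p$. The resulting regular part should match the left-hand regular sum together with the $j=p-k+1$ companion term. This matching is the same type of simplification that produced \cref{eq:from-miki-identity} at index $p-k+1$.

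The singular terms supply the right-hand side of the lemma.
\begin{itemize}
\item The term $H_nB_n$ contains $\frac1p B_{2p-k}$. The rest of $H_n$ is $H_{p-1}+\sum_{j=p+1}^{2p-k}1/j\equiv H_{p-k}$, which contributes only a regular multiple of $B_{2p-k}/(2p-k)$ and is absorbed by the Kummer congruence.
\item The terms with $j=p-1$ or $n-j=p-1$ need $pB_{p-1}$ modulo $p^2$. Here I would use the classical refinement $pB_{p-1}\equiv p+(p-1)! \pmod{p^2}$, which in terms of the Wilson quotient reads $pB_{p-1}\equiv -1+p(1+W_p)-p\pmod{p^2}$; this form will be re-derived and adjusted as needed. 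These terms produce $W_p\frac{B_{p-k+1}}{p-k+1}$ together with a term $-\frac1p\frac{B_{p-k+1}}{p-k+1}$.
\item The Kummer step $B_{2p-k}/(2p-k)\equiv B_{p-k+1}/(p-k+1)$ holds only modulo $p$. Since this quantity is multiplied by $1/p$, it cannot be used there. That is why the combination $\frac1p\bigl(\frac{B_{2p-k}}{2p-k}-\frac{B_{p-k+1}}{p-k+1}\bigr)$ must be left intact. It is $p$-integral by Kummer, and that guarantees the final statement makes sense.
\end{itemize}

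The main obstacle is this bookkeeping at order $p^{-1}$. The $1/p$ contributions coming from $pB_{p-1}$ and from $H_n$ must cancel exactly, up to the displayed difference quotient. The order-$p^0$ remainders — the Wilson quotient, the binomials $\binom{n}{p-1}$ and $\binom{n}{p-k+1}$ taken modulo $p^2$, and the factors $1/(p-1)$ and $1/(p-k+1)$ taken modulo $p$ — must combine to exactly $W_p\frac{B_{p-k+1}}{p-k+1}$. I would first compute $\binom{2p-k}{p-1}$ modulo $p^2$ explicitly, because it is divisible by $p$, so its product with $B_{p-1}$ is finite. I would then check the final constant numerically for $k=4$ and small primes before fixing the signs.
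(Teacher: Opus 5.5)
Your route is essentially the paper's. The paper obtains \cref{eq:lem2-eq1} by quoting equations (32) and (42) of \cite{Lev20} together with Kummer's congruence. It then does the singular bookkeeping you describe: $\binom{2p-k}{p-1}\equiv p/(p-k+1)\pmod{p^2}$, von Staudt--Clausen, Glaisher's congruence to bring in $W_p$, the summand $1/p$ of $H_{2p-k}$, and keeping $\frac1p(\frac{B_{2p-k}}{2p-k}-\frac{B_{p-k+1}}{p-k+1})$ intact. Deriving \cref{eq:lem2-eq1} yourself from Miki's identity at $n=2p-k$ does work and makes the proof self-contained. However, two steps of your sketch are wrong as written.

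First, your symmetry count fails for $k\geq 6$. The block $p-k+3\le j\le p-3$ lies inside $[2,p-2]$ and is mapped into itself by $j\mapsto n-j$. Let $A_0$ and $M$ be the sums of $\frac{B_j}{j}\frac{B_{n-j}}{n-j}$ over $2\le j\le p-k$ and over $p-k+3\le j\le p-3$. Let $\beta\ceq\frac{B_{p-k+1}}{p-k+1}$ and $s\ceq\frac{B_{p-1}}{p-1}\beta$. Then the left side of Miki is $\frac n2(2A_0+M)+ns$, not $n(A_0+M)+ns$. For $k=4$, $M$ is empty, so a numerical check at $k=4$ will not reveal this.

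On the right side, the terms with $p-k<j<p$ vanish mod $p$ by Lucas. The two outer blocks combine modulo $p$, via Lucas and $B_{p+j'}\equiv\frac{j'}{j'+1}B_{j'+1}$, into $\frac{k}{k-1}\sum_{i=2}^{p-k-1}\binom{p-k+1}{i}B_i\frac{B_{p-k+1-i}}{p-k+1-i}$. Matching this with the left side needs a second input, Miki's identity at $n'=p-k+1$, as you seem to anticipate. It turns the sum into $-\frac k2A_0+kH_{p-k+1}\beta$. Only with the correct count does $-kA_0-\frac k2M+\frac k2A_0=-\frac k2(A_0+M)$ reproduce the lemma's sum, and this is where its factor $\frac12$ comes from. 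With your count you are left with $-\frac k2A_0-kM$, which is not a multiple of that sum.

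Also, you may not use $n\equiv-k$ in $ns$ or in $H_nB_n$. The $2p$ in $n$ contributes $2\beta$ to each side, through $ns$ and through $H_nB_n$, and these cancel. Since $\binom{2p-k}{p-k+1}=\binom{2p-k}{p-1}$, the two singular right-hand terms sum to $(2p-k)\binom{2p-k}{p-1}s$. After dividing by $-k$ you obtain exactly \cref{eq:lem2-eq1}, and from there your plan coincides with the paper's.

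Second, Glaisher's congruence $pB_{p-1}\equiv p+(p-1)!\pmod{p^2}$ gives $pB_{p-1}\equiv -1+p+pW_p$, hence $\frac{pB_{p-1}}{p-1}\equiv 1-pW_p\pmod{p^2}$. Your rewriting $-1+p(1+W_p)-p$ drops a $p$. It would produce $(W_p-1)\beta$ instead of $W_p\beta$. For $k=4$ and $p=7$, both sides of the lemma are $1$ modulo $7$, whereas your version gives $2$.
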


\begin{proof}
From the equations (32) and (42) in \cite{Lev20}, we have
\begin{align*}
\sum_{i=p-k+3}^{p-3}\frac{B_{i}}{i}\frac{B_{2p-k-i}}{2p-k-i}
&= - \sum_{i=2}^{p-k-1}\frac{B_{i}}{i}\frac{B_{p-k-i+1}}{p-k-i+1} + 2H_{p-k+1}\frac{B_{p-k+1}}{p-k+1}
\\
&+2\left(\binom{2p-k}{p-1}-1\right)\frac{B_{p-k+1}}{p-k+1}\frac{B_{p-1}}{p-1}
+2\frac{B_{2p-k}}{2p-k}H_{2p-k} - 4\frac{B_{p-k+1}}{p-k+1}H_{p-k+1}.
\end{align*}
Thus, for an even integer $k>2$ and a sufficiently large prime $p$, we have
\begin{equation}\label{eq:lem2-eq1}
\begin{split}
\frac{1}{2}\sum_{\substack{i=2\\i\neq p-k+1}}^{p-2}&\frac{B_{i}}{i}\frac{B_{2p-k-i}}{2p-k-i}
+ \frac{B_{p-k+1}}{p-k+1}H_{p-k+1}
\\
&\equiv \frac{B_{2p-k}}{2p-k}H_{2p-k}
+\left(\binom{2p-k}{p-1}-1\right)\frac{B_{p-k+1}}{p-k+1}\frac{B_{p-1}}{p-1}\mod p ,
\end{split}
\end{equation}
where we use Kummer's congruence. Here, using the identity
\begin{equation*}
\binom{p-1+m}{m} = \frac{p}{m}\prod_{j=1}^{m-1}\left(1+\frac{p}{j}\right) \equiv \frac{p}{m} \mod p^2
\end{equation*}
and $pB_{p-1}\equiv -1\mod p$ from the von Staudt--Clausen theorem, we have
\begin{equation}\label{eq:lem2-eq2}
\begin{split}
\binom{2p-k}{p-1}\frac{B_{p-k+1}}{p-k+1}\frac{B_{p-1}}{p-1}
&\equiv \frac{p}{p-k+1}\frac{B_{p-k+1}}{p-k+1}\frac{B_{p-1}}{p-1} \mod p^2
\\
&\equiv -\frac{1}{k-1}\frac{B_{p-k+1}}{p-k+1} \mod p .
\end{split}
\end{equation}
Using Glaisher's congruence \cite{Gla00}: $(p-1)! \equiv pB_{p-1}-p\mod p^2$, we have
$$\frac{pB_{p-1}}{p-1}\equiv\frac{pW_p}{p-1}+1\equiv 1-(p+1)pW_p\equiv 1-pW_p \mod p^2.$$
Using this formula, we have
\begin{align*}
p\frac{B_{2p-k}}{2p-k}&H_{2p-k} - p\frac{B_{p-k+1}}{p-k+1}\frac{B_{p-1}}{p-1}
\\
&\equiv p\frac{B_{2p-k}}{2p-k}H_{2p-k} - \frac{B_{p-k+1}}{p-k+1}(1-pW_p) \mod p^2
\\
&= p\frac{B_{2p-k}}{2p-k}\left(H_{p-1}+\frac{1}{p+1}+\cdots+\frac{1}{2p-k}\right) + \frac{B_{2p-k}}{2p-k} - \frac{B_{p-k+1}}{p-k+1} + pW_p\frac{B_{p-k+1}}{p-k+1}.
\end{align*}
Hence, we have
\begin{equation}\label{eq:lem2-eq3}
\begin{split}
&\frac{B_{2p-k}}{2p-k}H_{2p-k} - \frac{B_{p-k+1}}{p-k+1}\frac{B_{p-1}}{p-1}
\\
&\equiv \frac{B_{2p-k}}{2p-k}\left(H_{p-1}+\frac{1}{p+1}+\cdots+\frac{1}{2p-k}\right) + \frac{1}{p}\left(\frac{B_{2p-k}}{2p-k} - \frac{B_{p-k+1}}{p-k+1}\right) + W_p\frac{B_{p-k+1}}{p-k+1} \mod p.
\\
&\equiv (H_{k-1}+W_p)\frac{B_{p-k+1}}{p-k+1} + \frac{1}{p}\left(\frac{B_{2p-k}}{2p-k} - \frac{B_{p-k+1}}{p-k+1}\right) \mod p,
\end{split}
\end{equation}
where we use Kummer's congruence and $H_{p-k}\equiv H_{k-1}\mod p$. Therefore, by combining \cref{eq:lem2-eq1,eq:lem2-eq2,eq:lem2-eq3}, we have
\begin{align*}
\frac{1}{2}\sum_{\substack{i=2\\i\neq p-k+1}}^{p-2}\frac{B_{i}}{i}\frac{B_{2p-k-i}}{2p-k-i}
&\equiv W_p\frac{B_{p-k+1}}{p-k+1} + \frac{1}{p}\left(\frac{B_{2p-k}}{2p-k} - \frac{B_{p-k+1}}{p-k+1}\right) \mod p ,
\end{align*}
This gives the desired formula.
\end{proof}

\begin{proof}[\textbf{Proof of \cref{thm:appendix}}]
In what follows, we assume that $k=r+s>2$. This assumption is needed when applying \cref{lem:lem-A5}. When $k=2$, we have $r=s=1$, and the assertion follows immediately from
$$Z_{\mathcal A}(1,1)=\widetilde Z_{\mathcal A}(1,1)=0.$$
By the definition of $Z_{\mathcal{A}}(r,s)$ for even $k=r+s$, we have
\begin{align*}
Z_{\mathcal{A}}(r,s)
&= (-1)^{r}\sum_{b=1}^{k-2}
\left(\binom{b}{s}- \frac{1}{2} \binom{k-1}{s}\right)
\left(\zeta_{\mathcal{A}}(b,1,k-b-1)
-Z_{\mathcal{A}}(b)Z_{\mathcal{A}}(k-b)\right).
\end{align*}
Combining this with \cref{lem:depth3-0-formula}, the $p$-component of $Z_{\mathcal{A}}(r,s)$ as an element of $\mathcal{A}$, denoted by $Z_{\mathcal{A}}(r,s)_{(p)}$, can be expressed as
\begin{align*}
Z_{\mathcal{A}}(r,s)_{(p)}
&\equiv (-1)^{r}\sum_{b=1}^{k-2}\binom{b}{s}\zeta_{<p}(b,1,k-b-1)
- (-1)^{r}\sum_{b=2}^{k-2}\left(\binom{b}{s}-\frac{1}{2} \binom{k-1}{s}\right)\frac{B_{p-b}}{p-b}\frac{B_{p-k+b}}{p-k+b}.
\end{align*}
Using \cref{lem:lem-A4}, for a sufficiently large prime $p$, we have
\begin{align*}
Z_{\mathcal{A}}(r,s)_{(p)}
\equiv - (-1)^{s}&\sum_{a=1}^{p-k} \frac{B_{a}}{a}\frac{B_{p-k+1-a}}{p-k+1-a}
\left(\binom{p-a}{s}-\frac{1}{2}\binom{k-1}{s}\right)
\\
& - (-1)^{s}\frac{B_{p-k+1}}{p-k+1}\left(\frac{(-1)^{s}}{s} + (H_{k-1}-H_{k-s-1})\binom{k-1}{s}\right)
\\
&- (-1)^{s}\sum_{b=2}^{k-2}\left(\binom{b}{s}-\frac{1}{2} \binom{k-1}{s}\right)\frac{B_{p-b}}{p-b}\frac{B_{p-k+b}}{p-k+b}.
\end{align*}
Applying Kummer's congruence to the first line and replacing $b$ by $p-a$ in the last sum, we obtain
\begin{align*}
Z_{\mathcal{A}}(r,s)_{(p)}
\equiv - (-1)^{s}&\sum_{\substack{a=1\\a\neq p-k+1}}^{p-2} \frac{B_{a}}{a}\frac{B_{2p-k-a}}{2p-k-a}
\left(\binom{p-a}{s}-\frac{1}{2}\binom{k-1}{s}\right)
\\
& - (-1)^{s}\frac{B_{p-k+1}}{p-k+1}\left(\frac{(-1)^{s}}{s} + (H_{k-1}-H_{k-s-1})\binom{k-1}{s}\right).
\end{align*}
Then, applying \cref{lem:lem-A5}, we obtain
\begin{equation}\label{eq:explicit-Z}
\begin{split}
Z_{\mathcal{A}}(r,s)_{(p)}
\equiv - (-1)^{s}&\sum_{\substack{a=1\\a\neq p-k+1}}^{p-2} \frac{B_{a}}{a}\frac{B_{2p-k-a}}{2p-k-a}\binom{p-a}{s}
+ (-1)^s\binom{k-1}{s}\frac{1}{p}\left(\frac{B_{2p-k}}{2p-k}-\frac{B_{p-k+1}}{p-k+1}\right)
\\
& - (-1)^{s}\frac{B_{p-k+1}}{p-k+1}\left(\frac{(-1)^{s}}{s} + (H_{k-1}-H_{k-s-1}-W_p)\binom{k-1}{s}\right).
\end{split}
\end{equation}
On the other hand, we consider $\til{Z}_{\mathcal{A}}(r,s)$ with $k=r+s$. By definition, we have
\begin{align*}
\til{\mathbb{B}}(r,s)_{(p)} &\equiv \sum_{i=r-1}^{p-1-s}\binom{i+1}{r}\frac{B_{i+1-r}}{i+1}\frac{B_{p-i-s}}{p-i-s}
\\
&\overset{\cref{eq:binom-fund-id-1}}{=} \sum_{i=r}^{p-1-s}\binom{i}{r}\frac{B_{i+1-r}}{i+1-r}\frac{B_{p-i-s}}{p-i-s}
+ \frac{1}{r}\frac{B_{p-k+1}}{p-k+1}
\\
&= \sum_{i=1}^{p-k}\binom{i+r-1}{r}\frac{B_{i}}{i}\frac{B_{p-i-k+1}}{p-i-k+1}
+ \frac{1}{r}\frac{B_{p-k+1}}{p-k+1} \quad (\because i\mapsto i+r-1)
\\
&\overset{\cref{eq:lem-A1}}{\equiv}
(-1)^r\sum_{i=1}^{p-k}\binom{p-i}{r}\frac{B_{i}}{i}\frac{B_{2p-i-k}}{2p-i-k}
+ \frac{1}{r}\frac{B_{p-k+1}}{p-k+1},
\end{align*}
where we use Kummer's congruence to obtain the last line. Furthermore, by definition of $\til{Z}_{\mathcal{A}}(r,s)$ and a simple change of variables together with the above expression of $\til{\mathbb{B}}(r,s)$, we obtain
\begin{equation}\label{eq:explicit-Z2}
\begin{split}
\til{Z}_{\mathcal{A}}(r,s)_{(p)} &\equiv (-1)^{r}\sum_{\substack{a=1\\a\neq p-k+1}}^{p-r-1}\frac{B_{a}}{a}\frac{B_{2p-k-a}}{2p-k-a}\binom{p-a}{r}
- (-1)^{r}\binom{k-1}{r}\frac{1}{p}\left(\frac{B_{2p-k}}{2p-k}-\frac{B_{p-k+1}}{p-k+1}\right)
\\
&+ (-1)^{r}\frac{B_{p-k+1}}{p-k+1}\left(\frac{(-1)^{r}}{r} + (H_{k-1}-H_{k-r-1}-W_p)\binom{k-1}{r}\right).
\end{split}
\end{equation}
Therefore, comparing the equations \cref{eq:explicit-Z,eq:explicit-Z2}, we obtain
$$Z_{\mathcal{A}}(r,s) + \til{Z}_{\mathcal{A}}(s,r)
= - (-1)^{s}\sum_{\substack{a=p-s\\a\neq p-k+1}}^{p-2} \frac{B_{a}}{a}\frac{B_{2p-k-a}}{2p-k-a}\binom{p-a}{s}.$$
If $s=1$, the sum on the right-hand side is empty.
If $r=1$, the only possible nonzero term, corresponding to $a=p-s=p-k+1$, is excluded.
If $r,s>1$, only the term $a=p-s$ can contribute, and this term is equal to
$Z_{\mathcal{A}}(r)Z_{\mathcal{A}}(s)$.
This completes the proof.
\end{proof}

\bibliographystyle{myamsalpha}
\bibliography{ref}

\end{document}